\newtheorem{theorem}{Theorem}[section]
\newtheorem{lemma}[theorem]{Lemma}
\newtheorem{definition}[theorem]{Definition}
\newtheorem{proposition}[theorem]{Proposition}
\newtheorem{remark}[theorem]{Remark}
\newcommand{\R}{\mathbb{R}}
\newcommand{\eps}{\varepsilon}
\newcommand{\abs}[1]{\mid\!#1\!\mid}
\newcommand{\norm}[2]{\left\lVert#1\right\rVert_{#2}}
\newcommand{\eqdef}{:=}
\DeclareMathOperator\supp{supp}
\DeclarePairedDelimiter\floor{\lfloor}{\rfloor}
\title[Non-Decaying Solutions to generalized SQG]{Existence of Non-Decaying Solutions to the Generalized Surface Quasi-Geostrophic Equations}
\author{Zachary Radke }
\date{\today}
\begin{document}

\maketitle

\begin{abstract}
We establish the short-time existence and uniqueness of non-decaying solutions to the generalized Surface Quasi-Geostrophic equations in Hölder-Zygmund spaces $C^r(\R^2)$ for $r>1$ and uniformly local Sobolev spaces $H_{ul}^s(\R^2)$ for $s>2$.
\end{abstract}

\section{Introduction}
\subsection{Background}
We study non-decaying solutions of the generalized surface quasi-geostrophic equations. This model arises as an interpolation between the 2D Euler and surface quasi-geostrophic equations. The 2D Euler equations are
\begin{equation}
\label{Euler Velocity}
\tag{$E$}
\begin{cases}
    \partial_t u + u\cdot \nabla u + \nabla p=0 \hspace{.5cm} &\text{in } [0,T]\times\R^2,\\
    \text{div }u=0 \hspace{.5cm} &\text{in } [0,T]\times\R^2, \\
    u\vert_{t=0}=u^0 \hspace{.5cm} &\text{in } \R^2,
\end{cases}
\end{equation}
where $u:[0,T]\times \R^2 \to \R^2$ is the velocity and $p:[0,T]\times\R^2 \to \R$ is the scalar pressure. In vorticity form the 2D Euler equations are
\begin{equation}
\label{Euler Vorticity}
\tag{$E_{\omega}$}
\begin{cases}
\partial_t \omega + u \cdot \nabla \omega = 0 \hspace{.5cm}& \text{in } [0,T] \times \R^2, \\
u = \mathcal{K} \ast \omega = \nabla^{\perp}(-\Delta)^{-1}\omega \hspace{.5cm} &\text{in } [0,T] \times \R^2, \\
\omega \vert_{t=0} = \omega^0 \hspace{.5cm} &\text{in } \R^2,
\end{cases}
\end{equation} 
where $\omega = \partial_1u_2-\partial_2u_1$. The surface quasi-geostrophic equations can be written as
\begin{equation}
\label{SQG}
\tag{$SQG$}
\begin{cases}
\partial_t \theta + u \cdot \nabla \theta = 0 \hspace{.5cm} &\text{in } [0,T]\times \R^2, \\
u=\nabla^{\perp}(-\Delta)^{-1/2} \theta \hspace{.5cm} &\text{in } [0,T]\times \R^2, \\
\theta\vert_{t=0} = \theta^0 \hspace{.5cm} &\text{in } \R^2.
\end{cases}
\end{equation} 
We see that \eqref{Euler Vorticity} and \eqref{SQG} are very similar in that the scalar quantity of interest is being transported by a divergence-free vector field, but differ in that the velocity $u$ in $\eqref{Euler Vorticity}$ gains one more spatial degree of regularity than $\omega$. This is in contrast to \eqref{SQG} where the velocity and scalar have roughly the same spatial regularity.
\\
\\
In this paper we interpolate between $\eqref{Euler Vorticity}$ and $\eqref{SQG}$ by defining the velocity as $u = \nabla^{\perp}(-\Delta)^{-1+\beta/2}$ for $\beta \in (0,1)$. Here $(-\Delta)^{\frac{s}{2}}$ is the fractional Laplacian which is defined as 
$$
(-\Delta)^{\frac{s}{2}}f(x) = C_{d,s}PV\int_{\R^2}\frac{f(x)-f(y)}{|x-y|^{d+s}}dy,
$$
for $s>0$ and $f \in \mathcal{S}'(\R^n)$. This gives the generalized SQG equation
\begin{equation}
\label{gSQG}
\tag{$gSQG_{\beta}$}
\begin{cases}
\partial_t \theta + u \cdot \nabla \theta = 0 \hspace{.5cm} &\text{in } [0,T]\times \R^2, \\
u=\nabla^{\perp}(-\Delta)^{-1+\beta/2} \theta \hspace{.5cm} &\text{in } [0,T]\times \R^2, \\
\theta\vert_{t=0} = \theta^0 \hspace{.5cm} &\text{in } \R^2.
\end{cases}
\end{equation}
Note that when $\beta=0$ we recover \eqref{Euler Vorticity} and when $\beta=1$ we recover \eqref{SQG}. We also see that we can write the velocity as
$$
u=\nabla^{\perp} (-\Delta)^{-1}(-\Delta)^{\frac{\beta}{2}}\theta = \mathcal{R}^{\perp}(-\Delta)^{\frac{\beta-1}{2}}\theta,
$$
where $\mathcal{R}$ is the Riesz transform defined as $$\mathcal{R}f = \nabla (-\Delta)^{-\frac{1}{2}} f.$$
Moreover, one can write $u=\nabla^{\perp}(\Phi_{\beta}\ast \theta)$ where $\Phi_{\beta}$ is the fundamental solution of $(-\Delta)^{1-\frac{\beta}{2}}$ defined as 
$$
\Phi_{\beta}(x)=\frac{C_{\beta}}{\abs{x}^{\beta}}.
$$ In a rough sense, we see that the velocity $u$ in \eqref{gSQG} gains $1-\beta$ degrees of spatial regularity more than the transported quantity $\theta$.

\subsection{Main Results}
We state our main result below in condensed form. More complete statements can be found in Theorema \ref{Holder Result} and Theorem \ref{Hsul Result}. Respectively, see Definitions \ref{Hsul definition}, \ref{zygmund definition}, and \ref{vector convolution definition} for the definitions of $H^s_{ul}$, $C^r$, $\dot{C}^r$ and $v\ast\cdot w$.
 \begin{theorem}
Let $\beta \in (0,1)$. Assume that $\theta^0 \in C^r(\R^2)$ for some $r>1$ and $u^0 \in L^{\infty}(\R^2)$ satisfy $$u^0 = \nabla^{\perp}(-\Delta)^{-1+\beta/2}\theta^0 \text{ in } \dot{C}^r(\R^2).$$ Then there exists some $T>0$ and a unique solution $(u,\theta)$ to \eqref{gSQG} with the constitutive law $u=\nabla^{\perp}(-\Delta)^{-1+\beta/2}$ replaced by the Serfati-type identity
$$
u(t)=u^0 + \nabla^{\perp}(a\Phi_{\beta}) \ast (\theta(t)-\theta^0) -\int_0^t (\nabla\nabla^{\perp}((1-a)\Phi_{\beta})\ast\cdot(\theta u)(\tau))d\tau,
$$
satisfying for any $r'\in (0,r)$,
\begin{align*}
\theta &\in L^{\infty}([0,T]; C^r(\R^2)) \cap \text{Lip}([0,T]; C^{r-1}(\R^2)) \cap C([0,T]; C^{r'}(\R^2)),\\
u &\in L^{\infty}([0,T]; C^{r+1-\beta}(\R^2)) \cap C([0,T]; C^{r'+1-\beta}(\R^2)).
\end{align*}
\end{theorem}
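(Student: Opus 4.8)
The plan is to build the solution by a fixed-point argument in the Lagrangian formulation of \eqref{gSQG} coupled to the Serfati-type identity, and to obtain uniqueness from a Gronwall estimate at low regularity. Let $a$ be a smooth cutoff equal to $1$ near the origin and supported in a compact set, write $\mathcal{L}g \eqdef \nabla^\perp(a\Phi_\beta)\ast g$ for the ``local'' operator and, for a vector field $w$, $\mathcal{N}[w](t) \eqdef \int_0^t \nabla\nabla^\perp((1-a)\Phi_\beta)\ast\cdot\,(\theta w)(\tau)\,d\tau$ for the ``nonlocal'' one, so the identity reads $u(t)=u^0+\mathcal{L}(\theta(t)-\theta^0)-\mathcal{N}[u](t)$. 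On a suitable closed ball $B_R\subset L^\infty([0,T];C^{r+1-\beta}(\R^2))$ with $u(0)=u^0$, I would define $\mathcal{T}$ by: given $u$, let $\Phi_t$ be its flow, set $\theta(t)\eqdef\theta^0\circ\Phi_t^{-1}$, and put $\mathcal{T}u(t)\eqdef u^0+\mathcal{L}(\theta(t)-\theta^0)-\mathcal{N}[u](t)$; a fixed point of $\mathcal{T}$, together with its transported scalar $\theta$, is exactly the desired solution. (Note the hypotheses already force $u^0\in C^{r+1-\beta}(\R^2)$, since $\nabla^\perp(-\Delta)^{-1+\beta/2}$ sends $C^r$ into $\dot C^{r+1-\beta}$ and $C^{r+1-\beta}=L^\infty\cap\dot C^{r+1-\beta}$.)

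The analytic heart is a pair of kernel estimates. Since $\Phi_\beta(x)=C_\beta|x|^{-\beta}$, the kernel $\nabla^\perp(a\Phi_\beta)$ is compactly supported with a singularity of order $|x|^{-1-\beta}$ at the origin --- locally integrable precisely because $\beta<1$ --- and $\mathcal{L}$ is, modulo a smoothing remainder, a truncated singular integral of order $\beta-1$; a Littlewood--Paley/paraproduct argument yields $\mathcal{L}\colon C^\sigma(\R^2)\to C^{\sigma+1-\beta}(\R^2)$ boundedly for $\sigma>0$, which is where the gain of $1-\beta$ derivatives is made rigorous. By contrast $(1-a)\Phi_\beta$ is smooth and decays like $|x|^{-\beta}$, so $\nabla\nabla^\perp((1-a)\Phi_\beta)$ is smooth with all derivatives decaying like $|x|^{-2-\beta-k}$; because $\beta>0$ these lie in $L^1(\R^2)$, so convolution with this kernel sends $L^\infty$ into $C^\infty$, $\mathcal{N}[w](t)$ is well defined on the merely bounded non-decaying product $\theta w$, and $\|\mathcal{N}[w](t)\|_{C^k}\lesssim_k t\,\|\theta\|_{L^\infty_t L^\infty}\|w\|_{L^\infty_t L^\infty}$. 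Thus the nonlocal term is $C^\infty$-smooth and carries an explicit factor $t$, so the finite regularity $C^{r+1-\beta}$ of $u$ comes entirely from $\mathcal{L}(\theta(t)-\theta^0)$. Consistency with $u^0=\nabla^\perp(-\Delta)^{-1+\beta/2}\theta^0$ in $\dot C^r(\R^2)$ is verified by differentiating the constitutive law in time, using $\partial_t\theta=-\nabla\cdot(\theta u)$ together with $\nabla\nabla^\perp(a\Phi_\beta)\ast(\theta u)=-\partial_t\big(\nabla^\perp(a\Phi_\beta)\ast\theta\big)$, and integrating on $[0,t]$; the passage to the homogeneous space absorbs the additive ambiguity intrinsic to reconstructing a non-decaying velocity.

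The remaining inputs are classical transport theory in Hölder--Zygmund spaces. Each $u\in B_R$ has $\nabla u\in C^{r-\beta}(\R^2)\subset C^{r-1}(\R^2)$ (using $0<\beta<1$, so $r-\beta>r-1>0$), hence its flow $\Phi_t$ is a bi-Lipschitz diffeomorphism with $\|\nabla\Phi_t^{\pm1}\|_{C^{r-1}}$ bounded on $[0,T]$; the persistence/composition estimate gives $\|\theta(t)\|_{C^r}\le\|\theta^0\|_{C^r}\exp\!\big(C\!\int_0^t\|\nabla u(\tau)\|_{C^{r-1}}\,d\tau\big)$, one has $\|\theta(t)-\theta^0\|_{C^{r'}}\to0$ as $t\to0$ for every $r'<r$, and $\partial_t\theta=-u\cdot\nabla\theta\in L^\infty([0,T];C^{r-1})$. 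Combining these with the kernel estimates, one checks that $\mathcal{T}$ maps $B_R$ into itself for $T$ small and is a contraction in the weaker norm of $C([0,T];C^{\sigma+1-\beta}(\R^2))$ for a fixed $\sigma\in(0,r-1)$ --- only at the lower regularity, because the transport step loses a derivative on $\theta$, with the required smallness supplied jointly by the factor $t$ in $\mathcal{N}$ and by the transport stability estimate for $\theta_1-\theta_2$ in $C^\sigma$. The fixed point $u$ and $\theta=\theta^0\circ\Phi_t^{-1}$ solve \eqref{gSQG} in the stated sense; time continuity of $\theta$ in $C^{r'}$ for every $r'\in(0,r)$ follows by interpolating the uniform $C^r$ bound against the Lipschitz-$C^{r-1}$ bound, and then the mapping property of $\mathcal{L}$ and the smoothness of $\mathcal{N}$ transfer it to continuity of $u$ in $C^{r'+1-\beta}$. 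Uniqueness follows identically: subtracting the Serfati identities for two solutions with the same data and using the $C^\sigma$ transport stability of $\theta_1-\theta_2$ closes a Gronwall inequality that forces the difference to vanish.

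I expect the main obstacle to be the harmonic analysis of the two truncated kernels: showing that $\mathcal{L}$ gains \emph{exactly} $1-\beta$ derivatives on the Hölder--Zygmund scale while staying compatible, on non-decaying data, with the inhomogeneous-to-homogeneous reconstruction of $u$; and pinning down the decay $|x|^{-2-\beta}$ of $\nabla\nabla^\perp((1-a)\Phi_\beta)$ so that $\mathcal{N}$ is absolutely convergent for all $\beta\in(0,1)$ and degenerates precisely at the Euler ($\beta=0$) and SQG ($\beta=1$) endpoints. With those lemmas in hand the fixed-point and Gronwall arguments are routine; the only remaining care is to book-keep the derivative loss so the contraction is set at the correct regularity.
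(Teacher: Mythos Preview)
Your approach is correct in outline but takes a genuinely different route from the paper. The paper does \emph{not} run a Banach fixed-point argument on the Serfati map. Instead it builds an explicit approximating sequence: set $\theta^1=S_2\theta^0$, $u^1=S_2u^0$; given $u^n$, solve the linear transport equation $\partial_t\theta^{n+1}+u^n\cdot\nabla\theta^{n+1}=0$ with initial data $S_{n+2}\theta^0$; then define $u^{n+1}$ by the Serfati-type identity with $\theta^{n+1}$ and $u^n$ in the nonlocal term. Uniform bounds on $\|u^n\|_{L^\infty}+\|\theta^n\|_{C^r}$ are obtained by induction, and Cauchy-ness is proved in the weaker space $L^\infty\times C^{r-1}$ by estimating the differences $\eta^{n+1}=\theta^{n+1}-\theta^n$ and $v^{n+1}=u^{n+1}-u^n$ and summing a telescoping series (the initial contributions $\Delta_{n+2}\theta^0$, $\Delta_{n+2}u^0$ are summable because of the $C^r$ and $C^{r+1-\beta}$ regularity). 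The transport estimates are done in Eulerian form via the Littlewood--Paley commutator $[u\cdot\nabla,\Delta_j]$ rather than through composition with the flow.

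Both strategies rest on the same two kernel facts you identified --- that $\nabla^\perp(a\Phi_\beta)$ gains $1-\beta$ derivatives on the Zygmund scale and that $\nabla\nabla^\perp((1-a)\Phi_\beta)\in L^1$ --- and both close at one derivative below the top. Your contraction scheme is arguably more streamlined; the paper's iterative scheme, however, buys something the authors need later: at each stage $(u^n,\theta^n)$ is an honest smooth solution with mollified data, and precisely this sequence is recycled in the proof of the $H^s_{ul}$ result (Theorem~\ref{Hsul Result}), where the Hölder theorem furnishes the approximants whose $H^s_{ul}$ norms are then controlled by the a~priori estimate. A pure fixed-point argument would not directly supply that family of smooth approximate solutions.
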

\begin{theorem}
Let $\beta \in (0,1)$. Assume that $\theta^0 \in H_{ul}^s(\R^2)$ and $u^0 \in H_{ul}^{s+1-\beta}(\R^2)$ for some $s>2$ satisfy 
 $$u^0 = \nabla^{\perp}(-\Delta)^{-1+\beta/2}\theta^0 \text{ in } \dot{C}^{\alpha}(\R^2),$$ where $\alpha \in (1,s-1)$.
 Then there exists some $T>0$ and a unique solution $(u,\theta)$ to \eqref{gSQG} with the constitutive law $u=\nabla^{\perp}(-\Delta)^{-1+\beta/2}$ replaced by the Serfati-type identity
$$
u(t)=u^0 + \nabla^{\perp}(a\Phi_{\beta}) \ast (\theta(t)-\theta^0) -\int_0^t (\nabla\nabla^{\perp}((1-a)\Phi_{\beta})\ast\cdot(\theta u)(\tau))d\tau,
$$
satisfying
 \begin{align*}
\theta &\in L^{\infty}([0,T];H_{ul}^s(\R^2)) \cap Lip([0,T];H_{ul}^{s-1}(\R^2)) \\
u &\in L^{\infty}([0,T]; H_{ul}^{s+1-\beta}(\R^2)). 
\end{align*}
 \end{theorem}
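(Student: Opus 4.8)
My plan follows the standard template for such local well-posedness results — uniform a priori estimates for an iteration scheme together with a contraction estimate in a weaker norm — but arranged around the Serfati-type identity so that the non-decay of $\theta$ never forces us to apply the nonlocal operator $\nabla^{\perp}(-\Delta)^{-1+\beta/2}$ to a merely bounded function. I would fix once and for all a radial $a\in C^{\infty}_c(\R^2)$ with $a\equiv 1$ near the origin, split $\Phi_{\beta}=a\Phi_{\beta}+(1-a)\Phi_{\beta}$, and look for $(\theta,u)$ in $E_T:=L^{\infty}([0,T];H_{ul}^s)\times L^{\infty}([0,T];H_{ul}^{s+1-\beta})$ solving the transport equation $\partial_t\theta+u\cdot\nabla\theta=0$ together with the stated identity. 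The natural scheme is Picard iteration: given $(\theta_n,u_n)$, let $\theta_{n+1}$ solve the linear transport equation with velocity $u_n$ (solvable by characteristics, since $u_n$ will be Lipschitz in space) and define $u_{n+1}$ from $\theta_{n+1}$ and $\theta_{n+1}u_n$ through the Serfati identity; the identity forces $u_{n+1}(0)=u^0$, so the compatibility hypothesis is consistent, and each $u_{n+1}$ is divergence free because every term on its right-hand side is either $u^0$ or a $\nabla^{\perp}$ of a scalar field. One then shows the sequence is bounded in $E_T$ uniformly in $n$ (the a priori estimate) and Cauchy in the one-derivative-weaker space $L^{\infty}([0,T];H_{ul}^{s-1}\times H_{ul}^{s-\beta})$; the limit solves the system and the contraction estimate simultaneously yields uniqueness. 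The substance is the a priori estimate, which decouples into a velocity-reconstruction step and a transport step.

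For the velocity step I would record the mapping properties of the two halves of the Serfati kernel on the uniformly local Sobolev scale. The near half $\nabla^{\perp}(a\Phi_{\beta})$ is compactly supported and behaves like $|x|^{-(1+\beta)}$ at the origin — locally integrable precisely because $\beta<1$ — and is thus the kernel of a localized Riesz potential of order $1-\beta$; it maps $H_{ul}^{\sigma}\to H_{ul}^{\sigma+1-\beta}$ for $\sigma\ge 0$, so applied to $\theta(t)-\theta^0\in H_{ul}^s$ it contributes a term bounded by $C\norm{\theta(t)-\theta^0}{H_{ul}^s}$. The far half $\nabla\nabla^{\perp}((1-a)\Phi_{\beta})$ is $C^{\infty}$ on all of $\R^2$ — the cutoff removes the singularity — and, together with each of its derivatives, decays at least like $|x|^{-(2+\beta)}$ at infinity, hence lies in $L^1$ along with all its derivatives; convolution against such a kernel is a smoothing operation on the $H_{ul}^{\sigma}$ scale, and since $\theta u\in H_{ul}^s$ with $\norm{\theta u}{H_{ul}^s}\lesssim\norm{\theta}{H_{ul}^s}\norm{u}{H_{ul}^{s+1-\beta}}$ by the algebra property ($s>1$), the time-integral term is bounded by $C\int_0^t\norm{\theta(\tau)}{H_{ul}^s}\norm{u(\tau)}{H_{ul}^{s+1-\beta}}\,d\tau$. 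Together with $u^0\in H_{ul}^{s+1-\beta}$ this gives
\[
\norm{u(t)}{H_{ul}^{s+1-\beta}}\le\norm{u^0}{H_{ul}^{s+1-\beta}}+C\norm{\theta(t)-\theta^0}{H_{ul}^s}+C\int_0^t\norm{\theta(\tau)}{H_{ul}^s}\norm{u(\tau)}{H_{ul}^{s+1-\beta}}\,d\tau.
\]

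For the transport step, $s+1-\beta>2$ gives the embedding $H_{ul}^{s+1-\beta}\hookrightarrow C^1$, hence $\norm{\nabla u}{L^\infty}\lesssim\norm{u}{H_{ul}^{s+1-\beta}}$, and the standard commutator/energy estimate for transport — carried out on the uniformly local scale, where the extra commutators with the translated localizing bump are lower order and absorbed — yields $\frac{d}{dt}\norm{\theta(t)}{H_{ul}^s}\le C\norm{u(t)}{H_{ul}^{s+1-\beta}}\norm{\theta(t)}{H_{ul}^s}$, the top-order contribution closing because only $\norm{\nabla u}{L^\infty}$ and $\norm{u}{H_{ul}^s}\le\norm{u}{H_{ul}^{s+1-\beta}}$ enter. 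Inserting the velocity bound, using $\norm{\theta(t)-\theta^0}{H_{ul}^s}\le\norm{\theta^0}{H_{ul}^s}+\norm{\theta(t)}{H_{ul}^s}$, and running a continuity/bootstrap argument produces $T>0$ depending only on $\norm{\theta^0}{H_{ul}^s}$ and $\norm{u^0}{H_{ul}^{s+1-\beta}}$ on which $\norm{\theta(t)}{H_{ul}^s}$ and $\norm{u(t)}{H_{ul}^{s+1-\beta}}$ stay bounded; these bounds are uniform along the iteration and survive the passage to the limit. The Lipschitz-in-time statement is then immediate from $\partial_t\theta=-u\cdot\nabla\theta\in L^{\infty}_TH_{ul}^{s-1}$ (a product estimate using $s>2$ and $u\in H_{ul}^{s+1-\beta}\hookrightarrow H_{ul}^{s-1}$). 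Finally, that the Serfati solution is a genuine solution of $\eqref{gSQG}$ — that is, $u(t)=\nabla^{\perp}(-\Delta)^{-1+\beta/2}\theta(t)$ in $\dot{C}^{\alpha}$, with $\alpha\in(1,s-1)$ the exponent in which $\theta$ stays Hölder (by interpolation of the $L^{\infty}_TH_{ul}^s$ and $\mathrm{Lip}_TH_{ul}^{s-1}$ bounds) — follows by differentiating the Serfati identity in time, substituting $\partial_t\theta=-\nabla\cdot(\theta u)$, and matching the result against $\partial_t[\nabla^{\perp}(-\Delta)^{-1+\beta/2}\theta]$ through a singular-integral (principal-value) computation for the kernels $\nabla\nabla^{\perp}(a\Phi_{\beta})$ and $\nabla\nabla^{\perp}((1-a)\Phi_{\beta})$ in the homogeneous Hölder class, the base case $t=0$ being exactly the compatibility hypothesis.

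The main obstacle is the velocity-reconstruction step in the uniformly local setting. Away from the Hölder–Zygmund scale $C^r$, where localized fractional-integral and Calderón–Zygmund bounds are classical, one must prove that $\nabla^{\perp}(a\Phi_{\beta})\ast$ gains \emph{exactly} $1-\beta$ derivatives on $H_{ul}^s$ and that $\nabla\nabla^{\perp}((1-a)\Phi_{\beta})\ast$ genuinely smooths there, all while tracking how the slowly decaying, non-compactly-supported tails of these kernels interact with the translated cutoffs defining the $H_{ul}^s$ norm; the same uniformly local bookkeeping recurs in every product and commutator estimate used above and is where most of the technical effort goes. A secondary but genuinely delicate point is the principal-value analysis in $\dot{C}^{\alpha}$ needed for the identification with $\eqref{gSQG}$ — and, in the first place, to make sense of $\nabla^{\perp}(-\Delta)^{-1+\beta/2}$ acting on the non-decaying data at all — since the relevant identities only hold once those operators are shown to act boundedly on $\dot{C}^{\alpha}$-valued functions.
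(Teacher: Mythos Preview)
Your approach is correct but differs from the paper's in its overall architecture. Rather than iterating directly in $H^s_{ul}$ and proving contraction in a weaker norm, the paper manufactures its approximating sequence by applying the H\"older-space existence theorem (proved separately in an appendix) to mollified data $(S_{n+1}u^0,S_{n+1}\theta^0)$: this produces genuine smooth solutions $(u_n,\theta_n)$ on a uniform time interval, with uniform $C^{\alpha}$ bounds. The a priori $H^s_{ul}$ estimate is then proved for such smooth solutions, and crucially its Gr\"onwall factor involves only $\norm{u}{\tilde{C}^1}+\norm{\theta}{W^{1,\infty}}$ on the right --- quantities already controlled by the H\"older result --- so the estimate closes immediately without the bootstrap you need to handle the top-order coupling $\norm{u}{H^{s+1-\beta}_{ul}}\lesssim\norm{\theta}{H^s_{ul}}+\cdots$ coming from the near-field term. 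Passage to the limit is then by local compactness (Rellich--Kondrachov applied to $\phi_R\theta_n$, combined with equicontinuity in time) rather than by contraction, and uniqueness is proved separately via an $L^\infty$ estimate using the flow map of $u^1$. Your route is more self-contained, since it does not presuppose the H\"older well-posedness; the paper's is more modular, treating the $H^s_{ul}$ result as a regularity-transfer on top of the H\"older theory. The hard analysis --- the localized Kato--Ponce commutator estimate for transport in $H^s_{ul}$ and the near-field kernel bound $\nabla^{\perp}(a\Phi_{\beta})\ast:H^{s-1+\beta}_{ul}\to H^{s}_{ul}$, which you correctly flag as the main obstacle --- is common to both and is isolated in the paper as standalone lemmas.
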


\subsection{Known Results}
Global existence of smooth solutions to \eqref{SQG} is a very challenging open question.  Since this question is so difficult, one idea was to regularize the velocity field in some manner. One of the earliest papers to take this approach is \cite{10}, where the authors study a logarithmic generalization of the velocity given by $$u = \nabla^{\perp}(-\Delta)^{-1}(\log(1+\log(1-\Delta)))^{\beta}\theta,$$ which is more singular than the velocity in \eqref{Euler Velocity}. For such a model, the authors establish global well-posedness in $L^1 \cap L^{\infty} \cap B_{q,\infty}^s(\R^2) $. Furthermore, they provide a blow-up criterion. In particular, they assume for $\beta \in [0,1],\theta_0 \in C^{\sigma}(\R^2) \cap L^q(\R^2)$ with $\sigma>1$, $q>1$, that if $\theta$ satisfies, for $T>0$:
$$
\int_0^T \norm{\theta(\cdot,t)}{C^{\beta}}dt < \infty,
$$
then $\theta$ remains in $C^{\sigma}(\R^2) \cap L^q(\R^2)$ on the time interval $[0,T]$.
\\
\\
 It is also natural to ask what happens the velocity $u$ is more singular than the scalar $\theta$. This problem is investigated in \cite{10} where the authors study the generalization of the velocity field given by $u=\nabla^{\perp}(-\Delta)^{-\frac{1+\beta}{2}}$ where $\beta \in (1,2)$. The authors establish local well-posedness for \eqref{gSQG} given $\theta_0 \in H^m(\R^2)$ for $m \geq 4$ an integer. Existence for global weak solutions on the torus $\mathbb{T}^2$ is shown provided that $\theta$ is mean zero, i.e. $\int_{\mathbb{T}^2}\theta dx =0$.
\\
\\
There has also been  work done for the ill-posedness for \eqref{gSQG} when $\beta \in (1,2)$. Very recently, in \cite{9}, Córdoba and Martínez-Zoroa establish strong ill-posedness in classical Hölder Spaces. In this setting, for $k \geq 2$, the authors also construct solutions with initial data in $C^{k,\gamma}(\R^2)\cap L^2(\R^2)$ that instantaneously leave $C^{k,\gamma}(\R^2)$, but lie in some Sobolev space of sufficiently small order. 
\\
\\
For the study of non-decaying solutions to fluid equations one of the earliest results can be found in \cite{7} where Serfati studies solutions to \eqref{Euler Velocity} with initial velocity and vorticity in $L^{\infty}(\R^2)$ and potentially lacking spatial decay. One of the main obstacles in this setting is that the constitutive law typically ceases to make sense. We thus adapt the constitutive law to hold in a weaker sense. This is often achieved by applying a cut-off function to the kernel $\mathcal{K}$ to split the kernel into a near-field and far-field term. Then integrating the far-field term by parts yields a constitutive law which makes sense for functions without decay. Such an identity is often referred to as the Serfati identity \cite{7}.
\\
\\
For more regular solutions than those considered in \cite{7}, the authors in \cite{14} and \cite{15} establish global well posedness to the 2D Navier Stokes equations with initial velocity in $L^{\infty}(\R^2)$. The Boussinesq equations have also been studied in such spaces, see for example \cite{16}. In \cite{5} Wu studies solutions to \eqref{SQG} in Hölder spaces $C^{k,\gamma}(\R^2)$, but assumes some integrability condition on $\theta$ so the constitutive law makes sense. In \cite{1} the authors remove the integrability conditions and replace the constitutive law with a Serfati-type identity. In \cite{1}, the authors establish well-posedness to both \eqref{Euler Velocity} and \eqref{SQG} in Hölder-Zygmund and uniformly local Sobolev spaces $H_{ul}^m(\R^2)$ for $m \geq 3$ an integer.\\
\\
In this work, we expand upon \cite{1} by establishing a similar result for \eqref{gSQG} in Hölder-Zygmund spaces \( C^r(\mathbb{R}^2) \) and uniformly local Sobolev spaces \( H^s_{ul}(\mathbb{R}^2) \) for \( s > 2 \), where \( s \) is not necessarily an integer. Since fractional derivatives are nonlocal, working in \( H^s_{ul}(\mathbb{R}^2) \) introduces significant technical challenges. This is due to the fact that we aim to derive local estimates for a nonlocal operator. In particular, the a priori estimate required in \( H^s_{ul}(\mathbb{R}^2) \) gives rise to several new difficulties. The main one is controlling the commutator involving the Bessel potential. Another involves obtaining precise estimates for the near-field behavior of the constitutive law.
\\
\\
\subsection{Notation and Organization of Paper}
We next define some of the notations that we will use throughout this paper. Consider two quantities $A,B$ parameterized by some index set $\Lambda$, we then say that $A \lesssim B$ if there exists some $C>0$ such that $A(\lambda) \leq CB(\lambda)$ for all $\lambda \in \Lambda$. If we wish to highlight the dependence of $C$ on a parameter, say $b$, we write $A \lesssim_b B$.  We write $A \sim B$ if $A \lesssim B$ and $B \lesssim A$
\\
\\
We let $a:\R^2 \to \R$ denote a radially symmetric bump function such that $a$ is identically $1$ in $B_1(0)$ and is monotone decreasing for $1 \leq \abs{x} \leq 2$, $a \in C_0^{\infty}(\R^2)$, and $\text{supp } a = \{x \in \R^2: \abs{x} \leq 2\}$. 
\begin{definition}
\label{vector convolution definition}Given two vector fields $v,w$ we define $v \ast \cdot w = \sum_i v^i \ast \omega^i$. Similarly, for matrix-valued functions $A$ and $B$ on $\R^d$, we define $(A\ast\cdot B)^{ij}=A^{ij}\ast B^{ij}$.
\end{definition}
The paper will be organized as follows. In Section 2, we start with a brief introduction to Littlewood-Paley theory and useful function spaces. Following, in Section 3 we show short-time existence in uniformly local Sobolev spaces. We do this by taking a sequence of approximating solutions in Hölder Zygmund spaces. We note that the proof in Hölder Zygmund spaces is identical to that in \cite{1}, one just needs to track the indices. For sake of completeness, we outline the well-posedness proof of Hölder Zygmund spaces in the appendix.
\section{Preliminaries and Lemmas}

\subsection{ODE Lemmas} We make extensive use of Grönwall's Lemma which we state below.
\begin{lemma}
Let $T>0$ and $u,\alpha,\beta$ be nonnegative, continuous functions on $[0,T]$ with $\alpha$ nondecreasing. If for all $t\in[0,T],$
$$
u(t) \leq \alpha(t)+\int_0^t\beta(s)u(s)ds,
$$
then for all $t\in [0,T]$,
$$
u(t)\leq\alpha(t)\exp\left(\int_0^t \beta(s)ds\right).
$$
\end{lemma}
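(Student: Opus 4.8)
The plan is to convert the integral inequality into a linear differential inequality and then solve it with an integrating factor. First I would set $v(t) \eqdef \int_0^t \beta(s)u(s)\,ds$. Since $\beta$ and $u$ are continuous, $v$ is continuously differentiable on $[0,T]$ with $v(0)=0$ and $v'(t) = \beta(t)u(t)$. Substituting the hypothesis $u(t) \le \alpha(t) + v(t)$ and using $\beta \ge 0$ gives the differential inequality
$$
v'(t) \le \beta(t)\alpha(t) + \beta(t)v(t), \qquad t \in [0,T].
$$

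Next I would introduce the integrating factor $\mu(t) \eqdef \exp\bigl(-\int_0^t \beta(s)\,ds\bigr)$, which satisfies $\mu'(t) = -\beta(t)\mu(t)$ and $\mu > 0$. Multiplying the inequality above by $\mu(t)$ and rearranging yields
$$
\frac{d}{dt}\bigl(\mu(t)v(t)\bigr) = \mu(t)\bigl(v'(t) - \beta(t)v(t)\bigr) \le \mu(t)\beta(t)\alpha(t).
$$
Integrating from $0$ to $t$ and using $v(0) = 0$ gives $\mu(t)v(t) \le \int_0^t \mu(s)\beta(s)\alpha(s)\,ds$.

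This is where the hypothesis that $\alpha$ is nondecreasing enters: for $s \le t$ we have $\alpha(s) \le \alpha(t)$, so
$$
\mu(t)v(t) \le \alpha(t)\int_0^t \mu(s)\beta(s)\,ds = \alpha(t)\int_0^t \bigl(-\mu'(s)\bigr)\,ds = \alpha(t)\bigl(1 - \mu(t)\bigr).
$$
Dividing by $\mu(t) > 0$ yields $v(t) \le \alpha(t)\bigl(\mu(t)^{-1} - 1\bigr)$, and feeding this back into $u(t) \le \alpha(t) + v(t)$ gives
$$
u(t) \le \alpha(t) + \alpha(t)\bigl(\mu(t)^{-1} - 1\bigr) = \alpha(t)\,\mu(t)^{-1} = \alpha(t)\exp\Bigl(\int_0^t \beta(s)\,ds\Bigr),
$$
which is the claim.

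There is no genuine obstacle here; the only point requiring care is the monotonicity of $\alpha$, which is precisely what lets us pull $\alpha$ out of the integral and collapse the estimate to the clean closed form (without it one only obtains the weaker bound $u(t) \le \alpha(t) + \int_0^t \alpha(s)\beta(s)\exp(\int_s^t \beta(r)\,dr)\,ds$). One should also observe that continuity of $u$, $\alpha$, and $\beta$ is what legitimizes the fundamental-theorem-of-calculus manipulations on $v$ and $\mu v$. An alternative, slightly longer route is to iterate the inequality — substituting the bound for $u$ into itself repeatedly and summing the resulting series in $\int_0^t\beta$ — but the integrating-factor argument is the most economical.
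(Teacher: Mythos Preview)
Your proof is correct: the integrating-factor argument is the standard and cleanest way to establish this form of Gr\"onwall's lemma, and every step (differentiability of $v$, the sign of $\beta$ allowing the substitution, the use of monotonicity of $\alpha$ to extract it from the integral) is justified. The paper itself does not supply a proof of this lemma---it merely states it as a standard tool---so there is nothing to compare your approach against.
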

\subsection{Littlewood-Paley Theory}
Before defining our function spaces, we begin with a brief overview of the Littlewood-Paley operators. We closely follow \cite{3}.\\
\\
It is well known that if $\mathcal{C}$ is the annulus of center $0$, short radius $3/5$, and long radius $5/3$, then there exists two positive radial functions $\hat{\chi},\hat{\varphi} \in C_0^{\infty}(B_{5/6}(0))$ and $C_0^{\infty}(\mathcal{C})$ respectively such that, if for $j \in \mathbb{Z}$ we set $\varphi_j(x)=2^{jd}\varphi(2^jx)$, then
$$
\hat{\chi}+\sum_{j\geq 0} \hat{\varphi}_j = \hat{\chi} + \sum_{j \geq 0} \hat{\varphi}(2^{-j}\cdot)
 \equiv 1.$$
For $n \in \mathbb{Z}$ we define $\chi_n$ via its Fourier transform as
$$
\hat{\chi}_n(\xi) = \hat{\chi}(\xi) + \sum_{j \leq n} \hat{\varphi}_j(\xi) \text{ for } \xi \in \R^d.
$$
For  $f \in \mathcal{S}'(\R^d)$, we define the operator $S_n$ as 
$$
S_n f =\chi_n \ast f.
$$
Formally, $S_n$ can be thought of as a projection in Fourier space into the ball of radius of order $2^n$. For $f \in \mathcal{S}'(\R^d)$ we define the inhomogeneous Littlewood-Paley operators $\Delta_j$ by 
\begin{equation*}
\label{LP operator}
\Delta_j f
=
\begin{cases}
    0, \hspace{1cm} \text{if }j < -1\\
    \chi \ast f, \hspace{1cm} \text{if } j=-1 \\
    \varphi_j \ast f, \hspace{1cm} \text{if } j\geq 0,
\end{cases}
\end{equation*}
and for all $j\in \mathbb{Z}$ we define the homogeneous Littlewood-Paley operators $\dot{\Delta}_j$ by
$$
\dot{\Delta}_j f = \varphi_j \ast f.
$$
One can formally think of $\dot{\Delta}_j$ as a projection in Fourier space into the annulus of inner and outer radius of order $2^j$. For any $u \in \mathcal{S}'(\R^d)$, we have that $u = \lim_{n \to \infty} S_nu$ where the limit holds in $\mathcal{S}'(\R^d)$; equivalently
$$
u = \sum_{j\geq -1} \Delta_j u \text{ in } \mathcal{S}'(\R^d).
$$
One benefit of using Littlewood-Paley operators can be seen in Bernstein's Lemma. Heuristically, Bernstein's Lemma states that if the support of $\hat{u}$ is included in some ball $B_{\lambda}(0)$, then a derivative of $u$ costs no more than $\lambda$. This will be made precise in Lemma \ref{Bernstein's Lemma} below. We let $\mathcal{C}_{a,b}(0)$ denote the annulus with inner radius $a$ and outer radius $b$.

\begin{lemma}
\label{Bernstein's Lemma}(Bernstein's Lemma) Let $(r_1,r_2)$ be a pair of strictly positive real numbers such that $r_1<r_2$. Then there exists a constant $C>0$, such that, for every integer $k$, every pair of real numbers $(a,b)$ satisfying $b \geq a \geq 1$, and every function $u \in L^a(\R^d)$,
\begin{enumerate}

\item 
\label{Berstein in Ball}
If $\supp{\hat{u}} \subset B_{r_1\lambda}(0)$ then
\begin{equation*}
 \sup_{\abs{\alpha} = k} \norm{\partial^{\alpha}u}{L^b} \leq C^k\lambda^{k+d(\frac{1}{a}-\frac{1}{b})}\norm{u}{L^a}.
\end{equation*}\\
\item 
\label{Berstein in Annulus}
Furthermore, if $\supp{\hat{u}} \subset \mathcal{C}_{r_1\lambda, r_2\lambda}(0)$ then
\begin{equation*}
 C^{-k}\lambda^k\norm{u}{L^a} \leq  \sup_{\abs{\alpha} = k} \norm{\partial^{\alpha}u}{L^a} \leq C^k\lambda^k\norm{u}{L^a}.
\end{equation*}
\end{enumerate}
\end{lemma}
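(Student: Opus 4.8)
The plan is to reduce both parts to Young's convolution inequality after localizing $\hat{u}$ by a fixed bump function rescaled to the frequency scale $\lambda$; the only quantitative point is that the resulting convolution kernels have $\norm{\cdot}{L^c}$ at most $C^{k}$, with $C$ independent of $k$, of $c\in[1,\infty]$, and of $\lambda$. (This is the classical Littlewood--Paley Bernstein inequality, cf. \cite{3}.)

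For the upper bounds (part~\eqref{Berstein in Ball} and the right-hand inequality of part~\eqref{Berstein in Annulus}) I would fix a radial $\psi\in C_0^{\infty}(\R^d)$ with $\psi\equiv1$ on $B_{r_2}(0)$ --- hence also on $B_{r_1}(0)$ and on $\mathcal{C}_{r_1,r_2}(0)$ --- and $\supp{\psi}\subset B_R(0)$ for some fixed $R=R(r_1,r_2)$. If $\supp{\hat{u}}\subset B_{r_2\lambda}(0)$ then $\hat{u}=\psi(\cdot/\lambda)\hat{u}$, so with $g_{\alpha}\eqdef\partial^{\alpha}\mathcal{F}^{-1}\psi$ (whose Fourier transform is $(i\xi)^{\alpha}\psi(\xi)$) one gets $\partial^{\alpha}u=\lambda^{|\alpha|}\big(\lambda^{d}g_{\alpha}(\lambda\,\cdot)\big)\ast u$. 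Young's inequality with $1+\tfrac1b=\tfrac1c+\tfrac1a$ (so $1-\tfrac1c=\tfrac1a-\tfrac1b$) and the scaling identity $\norm{\lambda^{d}g_{\alpha}(\lambda\,\cdot)}{L^c}=\lambda^{d(\frac1a-\frac1b)}\norm{g_{\alpha}}{L^c}$ give $\norm{\partial^{\alpha}u}{L^b}\le\lambda^{k+d(\frac1a-\frac1b)}\norm{g_{\alpha}}{L^c}\norm{u}{L^a}$ for $|\alpha|=k$, so it remains to prove $\sup_{|\alpha|=k}\norm{g_{\alpha}}{L^c}\le C^{k}$ uniformly in $c$. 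Since $\norm{g_{\alpha}}{L^c}\le\max(\norm{g_{\alpha}}{L^1},\norm{g_{\alpha}}{L^{\infty}})$ it suffices to treat $c=1,\infty$. The bound $\norm{g_{\alpha}}{L^{\infty}}\le\norm{(i\xi)^{\alpha}\psi}{L^1}\le R^{k}\norm{\psi}{L^1}$ is immediate. For $c=1$ I would fix an integer $M$ with $2M>d$ and split $\int|g_{\alpha}|$ over $\{|x|\le1\}$ and $\{|x|>1\}$, using $|g_{\alpha}|\le\norm{g_{\alpha}}{L^{\infty}}$ on the first set and, on the second, $|x|^{2M}|g_{\alpha}(x)|=\big|\mathcal{F}^{-1}\big((-\Delta_{\xi})^{M}\big((i\xi)^{\alpha}\psi\big)\big)(x)\big|\le\norm{(-\Delta_{\xi})^{M}\big((i\xi)^{\alpha}\psi\big)}{L^1}\lesssim_{d}\sup_{|\beta|\le2M}\norm{\partial^{\beta}\big((i\xi)^{\alpha}\psi\big)}{L^1}$ together with $\int_{|x|>1}|x|^{-2M}dx<\infty$; expanding $\partial^{\beta}(\xi^{\alpha}\psi)$ by Leibniz, each summand is at most $\tfrac{\alpha!}{(\alpha-\delta)!}R^{k-|\delta|}\norm{\partial^{\beta-\delta}\psi}{L^{\infty}}$ on $\supp{\psi}$, and since $\tfrac{\alpha!}{(\alpha-\delta)!}\le k^{|\delta|}\le k^{2M}$ with $2M$ fixed, $\norm{g_{\alpha}}{L^1}\lesssim_{d,\psi}k^{2M}R^{k}\lesssim(2R)^{k}$, the polynomial factor being absorbed into the exponential. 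Enlarging $C$ then gives $\sup_{|\alpha|=k}\norm{g_{\alpha}}{L^c}\le C^{k}$, which proves \eqref{Berstein in Ball} (take $R=r_1$) and, with $b=a$ and $R=r_2$, the upper bound in \eqref{Berstein in Annulus}.

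For the reverse bound in \eqref{Berstein in Annulus} I would use the algebraic identity $|\xi|^{2k}=\sum_{|\gamma|=k}\binom{k}{\gamma}\xi^{2\gamma}$. Pick $\widetilde{\psi}\in C_0^{\infty}(\R^d)$ with $\widetilde{\psi}\equiv1$ on $\mathcal{C}_{r_1,r_2}(0)$ and $\supp{\widetilde{\psi}}\subset\mathcal{C}_{r_1/2,\,2r_2}(0)$, so that $|\xi|^{-2k}$ is smooth on $\supp{\widetilde{\psi}}$, and define $\Theta^{\gamma}_{\lambda}$ by $\widehat{\Theta^{\gamma}_{\lambda}}(\xi)=\binom{k}{\gamma}\tfrac{(-i\xi)^{\gamma}}{|\xi|^{2k}}\widetilde{\psi}(\xi/\lambda)$. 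Because $(-i\xi)^{\gamma}(i\xi)^{\gamma}=\xi^{2\gamma}$ and $\widetilde{\psi}(\xi/\lambda)\equiv1$ on $\supp{\hat{u}}\subset\mathcal{C}_{r_1\lambda,\,r_2\lambda}(0)$, summing over $|\gamma|=k$ gives $\hat{u}=\sum_{|\gamma|=k}\widehat{\Theta^{\gamma}_{\lambda}}\,\widehat{\partial^{\gamma}u}$, i.e. $u=\sum_{|\gamma|=k}\Theta^{\gamma}_{\lambda}\ast\partial^{\gamma}u$, so Young's inequality gives $\norm{u}{L^a}\le\big(\sum_{|\gamma|=k}\norm{\Theta^{\gamma}_{\lambda}}{L^1}\big)\sup_{|\gamma|=k}\norm{\partial^{\gamma}u}{L^a}$. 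Fourier rescaling gives $\norm{\Theta^{\gamma}_{\lambda}}{L^1}=\lambda^{-k}\norm{\Theta^{\gamma}_{1}}{L^1}$, and $\binom{k}{\gamma}^{-1}\widehat{\Theta^{\gamma}_{1}}$ is smooth and supported in the fixed compact set $\mathcal{C}_{r_1/2,\,2r_2}(0)$, so the decay argument above applies verbatim; the only new feature is that differentiating $|\xi|^{-2k}$ at most $2M$ times produces factors of size $\lesssim k^{2M}$ (times powers of $r_1,r_2$), which are again swallowed by the exponential, giving $\norm{\Theta^{\gamma}_{1}}{L^1}\lesssim_{d,r_1,r_2}\binom{k}{\gamma}k^{2M}\rho^{k}$ for some $\rho=\rho(r_1,r_2)$. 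Since $\sum_{|\gamma|=k}\binom{k}{\gamma}=d^{k}$, this yields $\sum_{|\gamma|=k}\norm{\Theta^{\gamma}_{\lambda}}{L^1}\le C^{k}\lambda^{-k}$, which is the claim.

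I expect the only real obstacle to be these kernel bounds: one must verify that the two sources of $k$-dependence --- the falling-factorial prefactors $\alpha!/(\alpha-\delta)!$ produced by differentiating $\xi^{\alpha}$, and the factors of $k$ produced by differentiating $|\xi|^{-2k}$ --- always occur to a power bounded by $2M$, a quantity fixed once and for all by the dimension, so they grow only polynomially in $k$ and are harmlessly absorbed into $C^{k}$ (after enlarging $C$). Everything else --- localization, Young's inequality, Fourier rescaling, interpolation in $c$, and the multinomial expansion of $|\xi|^{2k}$ --- is routine bookkeeping.
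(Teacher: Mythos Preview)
The paper does not actually prove this lemma: it is stated as the classical Bernstein inequality and the surrounding discussion attributes the Littlewood--Paley background to \cite{3}, so there is no ``paper's own proof'' to compare against. Your argument is the standard one (essentially that of \cite{3}): localize $\hat{u}$ by a rescaled bump, reduce to Young's inequality, and control the convolution kernels by the $|x|^{2M}$-weighted Fourier decay trick; for the reverse inequality, invert $|\xi|^{2k}$ on the annulus via the multinomial expansion. The bookkeeping you flag --- that differentiating $\xi^{\alpha}$ or $|\xi|^{-2k}$ at most $2M$ times yields only polynomial-in-$k$ factors, absorbed into $C^{k}$ --- is exactly the point, and your treatment of it is correct. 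In short: your proof is fine and is the classical route; the paper simply quotes the result.
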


\subsection{Function Spaces}

We begin by defining the Bessel potential, allowing us to make sense of Sobolev spaces for non-integer valued exponents.
\begin{definition} Let $s\in \R$. We define the Bessel potential of order $s$ to be $$J^s=(I-\Delta)^{s/2},$$ where  
$$
(I-\Delta)^{s/2}f = \mathcal{F}^{-1}(\widehat{G}^s\widehat{f}) = G^s \ast f,
$$
and
$$
G^s(x)= C_n \mathcal{F}^{-1}\left(1+4\pi^2\abs{\xi}^2\right)^{s/2}(x).
$$
\end{definition}
We have the following commutator estimate on the Bessel potential due to Kato and Ponce \cite{6}.

\begin{proposition} Suppose that $s>0$ and $p\in (1,\infty)$. Then for $f,g \in \mathcal{S}(\R^d)$ we have 
\begin{equation*}
\label{bessel commutator}
\norm{J^s(fg)-fJ^sg}{L^p} \lesssim \norm{\nabla f}{L^{\infty}}\norm{J^{s-1}g}{L^p}+\norm{J^s f}{L^p}\norm{g}{L^{\infty}}.
\end{equation*}
\end{proposition}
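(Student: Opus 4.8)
The plan is to prove this as an a priori estimate (all functions being Schwartz, convergence is free) using the Littlewood--Paley operators and Bony's paraproduct calculus. Writing $T_u v = \sum_j S_{j-1}u\,\Delta_j v$ for the paraproduct and $R(u,v)=\sum_j \Delta_j u\,\bigl(\Delta_{j-1}+\Delta_j+\Delta_{j+1}\bigr)v$ for the remainder, one has $fg = T_f g + T_g f + R(f,g)$, and applying the same decomposition to $fJ^s g$ with $J^s g$ in place of $g$ and subtracting (using that $J^s$ is a Fourier multiplier) gives
\[
J^s(fg) - f J^s g \;=\; \sum_j [J^s, S_{j-1}f]\,\Delta_j g \;+\; \bigl(J^s T_g f - T_{J^s g} f\bigr) \;+\; \bigl(J^s R(f,g) - R(f, J^s g)\bigr).
\]
The design is that in the last two groups $f$ enters only through its blocks $\Delta_j f$ (paired against factors controlled by $\norm{g}{L^\infty}$), so each of those four pieces can be bounded separately by $\norm{J^s f}{L^p}\norm{g}{L^\infty}$; only the first group, where $\norm{S_{j-1}f}{L^\infty}$ would otherwise appear, requires a genuine commutator and produces the $\norm{\nabla f}{L^\infty}\norm{J^{s-1}g}{L^p}$ term.

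For the commutator group I would use the standard fact that, restricted to frequencies of size $\sim 2^j$, the operator $J^s$ equals $2^{js}$ times convolution with a Schwartz kernel rescaled to width $2^{-j}$, uniformly in $j\ge 0$ (because $(2^{-2j}+|\eta|^2)^{s/2}$ is smooth and bounded together with all its derivatives on $|\eta|\sim 1$, uniformly in $j$), with an analogous $O(1)$ statement for $j\le 0$. Applying this to $\Delta_j g$ and expanding $S_{j-1}f$ to first order inside the commutator by the mean value theorem (with $\norm{\nabla S_{j-1}f}{L^\infty}\lesssim\norm{\nabla f}{L^\infty}$) yields the pointwise bound $\abs{[J^s,S_{j-1}f]\Delta_j g}\lesssim 2^{j(s-1)}\norm{\nabla f}{L^\infty}\,M(\Delta_j g)$, where $M$ is the Hardy--Littlewood maximal function. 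Each summand has Fourier support in a dyadic annulus $\sim 2^j$, so almost-orthogonality, the vector-valued Fefferman--Stein maximal inequality, and the Littlewood--Paley characterization of the Bessel-potential norm give $\bigl\|\sum_j [J^s,S_{j-1}f]\Delta_j g\bigr\|_{L^p}\lesssim \norm{\nabla f}{L^\infty}\,\bigl\|\bigl(\sum_j 2^{2j(s-1)}|\Delta_j g|^2\bigr)^{1/2}\bigr\|_{L^p}\sim \norm{\nabla f}{L^\infty}\norm{J^{s-1}g}{L^p}$.

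For $J^s T_g f$ and $T_{J^s g}f$, each block $S_{j-1}g\,\Delta_j f$ (resp. $S_{j-1}(J^s g)\,\Delta_j f$) is frequency-localized in an annulus $\sim 2^j$; bounding its $L^p$ norm by $\lesssim 2^{js}\norm{g}{L^\infty}\norm{\Delta_j f}{L^p}$ (using the Bernstein-type bound $\norm{S_{j-1}J^s}{L^\infty\to L^\infty}\lesssim 2^{js}$ for $j\ge 0$) and repeating the square-function/Fefferman--Stein argument closes these. For $J^s R(f,g)$ and $R(f,J^s g)$ the blocks are only supported in balls $\{\abs{\xi}\lesssim 2^{j+2}\}$, so instead I would apply $\Delta_m$ to the outcome, note that $\Delta_m$ only sees the blocks with $j\gtrsim m$, bound $\abs{\Delta_m J^s(\Delta_j f\,\widetilde\Delta_j g)}\lesssim 2^{ms}\norm{g}{L^\infty}M(\Delta_j f)$, and sum the resulting matrix $2^{(m-j)s}\mathbf{1}_{\{j\gtrsim m\}}$ by Schur's test --- this is exactly where $s>0$ is used, to make $\sum_{j\gtrsim m}2^{(m-j)s}$ finite --- before one last application of Fefferman--Stein; the term $R(f,J^s g)$ is identical after observing $\norm{\widetilde\Delta_j J^s g}{L^\infty}\lesssim 2^{js}\norm{g}{L^\infty}$.

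The main obstacle is the commutator estimate in the first group: showing that commuting the nonlocal operator $J^s$ --- whose kernel is not compactly supported and, for $s>0$, not even integrable --- past multiplication by $S_{j-1}f$ gains exactly one derivative on $f$ at the cost of $2^{j(s-1)}$. This rests entirely on the frequency-localized representation of $J^s$ and on uniform-in-$j$ control of the rescaled Bessel symbol, with the inhomogeneous low-frequency block $\Delta_{-1}$ and the transition between the regimes $\langle\xi\rangle^s\sim 1$ and $\langle\xi\rangle^s\sim\abs{\xi}^s$ handled separately; everything else is routine paraproduct bookkeeping.
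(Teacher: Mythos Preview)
The paper does not give its own proof of this proposition: it is stated with attribution to Kato and Ponce and then used as a black box in the $H^s_{ul}$ a priori estimate. There is therefore nothing in the paper to compare your argument against.

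That said, your outline is a correct sketch of the now-standard paraproduct proof. The splitting of $J^s(fg)-fJ^sg$ into the genuine commutator block $\sum_j[J^s,S_{j-1}f]\Delta_jg$ and the four ``easy'' pieces $J^sT_gf$, $T_{J^sg}f$, $J^sR(f,g)$, $R(f,J^sg)$ is the right move; the frequency-localized kernel representation of $J^s$ together with the mean-value theorem does give the gain of one derivative on $f$ in the commutator block, and the Fefferman--Stein plus Schur-test treatment of the remainders (using $s>0$ to sum $2^{(m-j)s}$ over $j\gtrsim m$) is exactly how these terms are closed. One small point to keep straight: for the almost-orthogonality of $\sum_j[J^s,S_{j-1}f]\Delta_jg$ to work cleanly you need $S_{j-1}$ to cut off a few blocks below $j$ (e.g.\ $S_{j-1}=\sum_{k\le j-2}\Delta_k$) so that $S_{j-1}f\cdot\Delta_jg$ genuinely lives in an annulus rather than a ball; this is only a matter of convention and does not affect the argument. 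The original Kato--Ponce paper proceeds a bit differently---writing the commutator as a bilinear Fourier multiplier and invoking the Coifman--Meyer theorem directly---but the underlying mechanism is the same.
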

We remark that Proposition \ref{bessel commutator} holds for functions in spaces of which the Schwartz class is dense. 
\begin{definition}
\label{definion of bessel potential spaces}Let $p \in (1,\infty)$. We define the Bessel Potential spaces $H_p^{s}(\R^d)$ and $\dot{H}_p^s(\R^d)$ to be \begin{align*}
H_p^s(\R^d)=\{f \in \mathcal{S}'(\R^d) : \norm{J^sf}{L^p} <\infty\},\\
\dot{H}_p^s(\R^d)=\{f \in \mathcal{S}'(\R^d) : \norm{(-\Delta)^{\frac{s}{2}}f}{L^p} <\infty\},
\end{align*}
equipped with the norms $$\norm{f}{H_p^s} = \norm{J^sf}{L^p} \text{ and }\norm{f}{\dot{H}_p^s} = \norm{(-\Delta)^{\frac{s}{2}}f}{L^p}.$$
For the case of $p=2$ we define $H^s(\R^d)=H_2^{s}(\R^d)$ and $\dot{H^s}(\R^d)=\dot{H}_2^{s}(\R^d)$. 
\end{definition}

\begin{remark}(Equivalent $H^s$ norms) We note that the above norm for $H^s(\R^d)$ is equivalent to the norm,  
$$\norm{f}{H^s} = \left( \sum_{j \geq -1} 2^{2js}\norm{\Delta_j f}{L^2}^2\right)^{1/2}.$$
A proof of equivalence can be found in \cite{3}.

\end{remark}
In what follows we let $\phi \in C_0^{\infty}(\R^2)$ be a standard bump function which is identically $1$ on $B_1(0)$, vanishes on $(B_2(0))^c$, and decreases monotonically when $1 \leq \abs{x}\leq 2$. We also define $\phi_x = \phi(x - \cdot)$ and $\phi_{x,\lambda}=\phi\left(\frac{x-\cdot}{\lambda}\right)$ for any $\lambda>0$.

\begin{definition}
\label{Hsul definition} We define the uniformly local Sobolev space of order $s>0$ as
$$
H_{ul}^s(\R^2) = \{ f \in \mathcal{S}'(\R^2) : \sup_{x\in \R^2} \norm{\phi_x f}{H^s} < \infty\},
$$
with norm 
\begin{equation}
\label{hsul norm}
\norm{f}{H_{ul}^s} = \sup_{x \in \R^2}\norm{\phi_xf}{H^s}.
\end{equation}
We can also define the homogeneous version, as in Definition \ref{definion of bessel potential spaces}.
\end{definition}

\begin{remark} For any fixed $\lambda>0$, we can define an equivalent norm on $H^s_{ul}(\R^2)$ as $$\norm{f}{H^s_{ul}}=\sup_{x \in \R^2}\norm{\phi_{x,\lambda}f}{H^s}.$$ A proof can be found in \cite{12}.
\end{remark}

We will use the following lemma which can be found in \cite{1} with its proof.
\begin{lemma}
\label{Lp norm of Snf}
Let $p \in [1,\infty)$ and $f \in L_{ul}^p(\R^d)$. Then 
$$
\norm{S_nf}{L_{ul}^p} \lesssim\norm{f}{L_{ul}^p},
$$
where the above implicit constant does not depend on $n$.
\end{lemma}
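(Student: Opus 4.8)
The plan is to reduce the statement to a purely $L^1$ fact about the kernels $\chi_n$, rather than trying to commute the truncation $S_n$ past the localizing cutoffs $\phi_x$ that define the $L^p_{ul}$ norm. First I would record that, by the Littlewood--Paley construction, $S_nf=\chi_n\ast f$ with $\chi_n(x)=2^{nd}\chi_\ast(2^nx)$ for a fixed Schwartz function $\chi_\ast$ (this is just the dyadic scaling $\widehat{\chi_n}(\xi)=\widehat{\chi_\ast}(2^{-n}\xi)$, up to a harmless reindexing of the pieces); consequently $\norm{\chi_n}{L^1}=\norm{\chi_\ast}{L^1}=:C_0$ is independent of $n$. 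I would also note the elementary point that for $f\in L^p_{ul}(\R^d)$ the convolution $\chi_n\ast f$ is a genuine, locally $L^p$ function: splitting $\R^d$ into dyadic annuli about the evaluation point and using $\norm{f}{L^1(B_R(y))}\lesssim R^d\norm{f}{L^p_{ul}}$ together with the rapid decay of $\chi_\ast$ shows the defining integral converges absolutely a.e.

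Next, fix $x\in\R^d$. Since $\supp\phi_x\subset\overline{B_2(x)}$ and $\abs{\phi}\le 1$, we have $\norm{\phi_xS_nf}{L^p}\le\norm{\chi_n\ast f}{L^p(B_2(x))}$, so it is enough to bound the right-hand side by $C\norm{f}{L^p_{ul}}$ with $C$ independent of $x$ and $n$. For this I would apply Minkowski's integral inequality together with a translation of the localization domain:
$$
\norm{\chi_n\ast f}{L^p(B_2(x))} \;\le\; \int_{\R^d}\abs{\chi_n(w)}\,\norm{f(\cdot-w)}{L^p(B_2(x))}\,dw \;=\; \int_{\R^d}\abs{\chi_n(w)}\,\norm{f}{L^p(B_2(x-w))}\,dw,
$$
where the last equality is the change of variables $B_2(x)-w=B_2(x-w)$. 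The gain is that every translated domain is again a ball of the \emph{fixed} radius $2$, so the inner norm is controlled uniformly.

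The final ingredient is the uniform local bound $\sup_{y\in\R^d}\norm{f}{L^p(B_2(y))}\lesssim\norm{f}{L^p_{ul}}$, which holds by covering $B_2(y)$ with a number of unit balls depending only on $d$ (or by invoking the equivalence of the $L^p_{ul}$ norm with the $\phi_{\cdot,\lambda}$-localized norm for $\lambda=2$, as recorded in the excerpt). Substituting this into the displayed inequality yields $\norm{\chi_n\ast f}{L^p(B_2(x))}\le C_0\,C\,\norm{f}{L^p_{ul}}$ with constants free of $x$ and $n$; taking the supremum over $x$ finishes the proof.

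I do not anticipate a real obstacle here: the only steps that need a little care are (i) confirming that $\norm{\chi_n}{L^1}$ is genuinely $n$-independent, which is exactly the dyadic scaling of $\chi_n$, and (ii) the a.e.\ well-definedness of $\chi_n\ast f$ as a function, for which the Schwartz decay of $\chi_\ast$ (not merely its $L^1$ bound) is used. Everything else is Minkowski's inequality plus the robustness of the $L^p_{ul}$ norm under translating balls of a fixed radius.
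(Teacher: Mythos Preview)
Your argument is correct. The paper does not actually supply a proof of this lemma; it simply refers the reader to \cite{1}, so there is no ``paper's own proof'' to compare against here. Your approach---reducing to the uniform $L^1$ bound $\norm{\chi_n}{L^1}=\norm{\chi_\ast}{L^1}$ via the dyadic scaling of the kernel, then applying Minkowski's integral inequality and the translation-invariance of the family of balls defining the $L^p_{ul}$ norm---is the standard one and goes through without issue. The only place to be slightly careful is the claim that $\chi_n$ is an exact $L^1$-preserving rescaling of a fixed Schwartz function: with the paper's definition $\widehat{\chi}_n=\widehat{\chi}+\sum_{j\le n}\widehat{\varphi}_j$ this follows from the telescoping identity $\widehat{\chi}+\widehat{\varphi}_0+\cdots+\widehat{\varphi}_n=\widehat{\chi}(2^{-n-1}\cdot)$ in the usual Chemin construction, and in any case the uniform bound $\sup_n\norm{\chi_n}{L^1}<\infty$ is all you use.
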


\begin{definition} Let $p \in [1,\infty)$, $s>0$ and write $s=m+\sigma$ where $m \in \mathbb{Z}_{\geq 0}$ and $\sigma \in (0,1)$. We define the Sobolev-Slobodeckij space $W^{s,p}(\R^2)$ as
$$
W^{s,p}(\R^2) = \left\{f \in W^{m,p}(\R^2) : \left[f \right]_{W^{s,p}(\R^2)} = \left(\iint_{\R^2 \times \R^2} \frac{\abs{\nabla^mf(x)-\nabla^mf(y)}^p}{\abs{x-y}^{2+\sigma p}} dx dy\right)^{\frac{1}{p}} <\infty \right\}. 
$$
We note that this is a Banach Space under the norm
$$
\norm{f}{W^{s,p}(\R^2)} = \norm{f}{W^{m,p}(\R^2)} + \left[f \right]_{W^{s,p}(\R^2)}. 
$$
We also define the uniformly local Sobolev-Slobodeckij space $W_{ul}^{s,p}(\R^2)$ via the norm 
\begin{equation}
\label{SSul norm}
\norm{f}{W_{ul}^{s,p}} = \sup_{x\in \R^2} \norm{\phi_xf}{W^{s,p}}.
\end{equation}

\end{definition}

\begin{proposition}
\label{SS equivalent norm}The norms $\norm {\cdot}{W_{ul}^{s,2}}$ and $\norm{\cdot}{H_{ul}^s}$ are equivalent.
\end{proposition}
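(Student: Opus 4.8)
The plan is to reduce the equivalence of $\norm{\cdot}{W^{s,2}_{ul}}$ and $\norm{\cdot}{H^s_{ul}}$ to the classical (non-uniformly-local) fact that $W^{s,2}(\R^2) = H^s(\R^2)$ with equivalent norms, which is standard (see e.g. the Gagliardo–Nirenberg–Slobodeckij characterization of fractional Sobolev spaces, or \cite{3}). Writing $s = m + \sigma$ with $m \in \mathbb{Z}_{\geq 0}$ and $\sigma \in (0,1)$, one has $\norm{g}{W^{s,2}} \sim \norm{g}{H^s}$ for all $g \in \mathcal{S}'(\R^2)$ with finite norm. Taking $g = \phi_x f$ and then the supremum over $x \in \R^2$ would \emph{formally} give the result, but the constant in $\norm{\phi_x f}{W^{s,2}} \sim \norm{\phi_x f}{H^s}$ is uniform in $x$ (it is just a translate), so this step is in fact legitimate and immediate. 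Thus the only real content is to verify that both localized quantities are finite simultaneously, i.e. that $f \in W^{s,2}_{ul}$ iff $f \in H^s_{ul}$ — which follows from the pointwise-in-$x$ equivalence — and that the suprema are comparable with an $x$-independent constant.

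In more detail, the steps I would carry out are: (i) recall and cite the classical equivalence $\norm{g}{W^{s,2}(\R^2)} \leq C_s \norm{g}{H^s(\R^2)}$ and $\norm{g}{H^s(\R^2)} \leq C_s' \norm{g}{W^{s,2}(\R^2)}$ with constants $C_s, C_s'$ depending only on $s$ (and the dimension $d=2$); (ii) observe that for each fixed $x$, $\phi_x f(y) = \phi(x-y) f(y)$ is, up to the translation $y \mapsto y - x$ which is an isometry of both $W^{s,2}$ and $H^s$, equal to $\phi \cdot (f(\cdot + x))$, so the constants $C_s, C_s'$ apply with no $x$-dependence; (iii) substitute $g = \phi_x f$ into the classical equivalence and take $\sup_{x}$ of each side, using $\norm{f}{W^{s,2}_{ul}} = \sup_x \norm{\phi_x f}{W^{s,2}}$ and $\norm{f}{H^s_{ul}} = \sup_x \norm{\phi_x f}{H^s}$ from \eqref{SSul norm} and \eqref{hsul norm}; (iv) conclude $C_s'^{-1}\norm{f}{W^{s,2}_{ul}} \leq \norm{f}{H^s_{ul}} \leq C_s \norm{f}{W^{s,2}_{ul}}$, giving the equivalence.

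The main (mild) obstacle is purely bookkeeping: one must make sure that the classical equivalence $W^{s,2} = H^s$ is quoted in a form valid for $\mathcal{S}'$-distributions rather than merely for Schwartz functions, so that it applies to $\phi_x f$ for $f \in \mathcal{S}'(\R^2)$ with finite uniformly-local norm; this is handled by the density of $\mathcal{S}(\R^2)$ in both $W^{s,2}$ and $H^s$ together with the fact that $\phi_x f$ is compactly supported (hence a genuine element of the relevant inhomogeneous space once the uniformly-local norm is finite). A secondary point is to note that $m = \lfloor s \rfloor$ and $\sigma = s - m$ are exactly the decomposition used in the definition of $W^{s,p}$ in the excerpt, so no reconciliation of conventions is needed. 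I expect the whole argument to be short: essentially "cite the classical result and take suprema."
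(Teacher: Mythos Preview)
Your proposal is correct and follows essentially the same approach as the paper: both reduce the uniformly-local equivalence to the classical fact that $\norm{\cdot}{W^{s,2}(\R^2)}$ and $\norm{\cdot}{H^s(\R^2)}$ are equivalent, then take $\sup_x$ over the localizations $\phi_x f$. The only difference is that the paper actually sketches the classical equivalence via Plancherel and the integral representation of the fractional Laplacian, whereas you simply cite it; your remark about translation invariance making the constants $x$-independent is correct and implicit in the paper's argument.
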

\begin{proof}
In view of \eqref{hsul norm} and \eqref{SSul norm}, we must show that the $W^{s,2}(\R^2)$ and $H^s(\R^2)$ norms are equivalent. This is fairly standard so we only provide an outline of the proof. We begin by recalling that the fractional Laplacian for $(-\Delta)^{\sigma}u$ in $\R^2$  for $\sigma \in (0,1)$ is defined as 
$$
(-\Delta)^{\sigma}u(x) = C_{d,\sigma}\int_{\R^2} \frac{u(x)-u(y)}{\abs{x-y}^{2+2\sigma}}dy.
$$
Note by Plancherel's theorem
$$
\int_{\R^2} u (-\Delta)^{\sigma} u dx = \norm{u}{\dot{H}^{\sigma}}^2.
$$
But we also see that 
\begin{align*}
\int_{\R^2} u (-\Delta)^{\sigma} u dx &= \int_{\R^2} u(x) \left(C_{\sigma}\int_{\R^2} \frac{u(x)-u(y)}{\abs{x-y}^{2+2\sigma}}dy\right)dx\\&\hspace{1cm}+ \int_{\R^2} u(y) \left(C_{\sigma}\int_{\R^2} \frac{u(y)-u(x)}{\abs{x-y}^{2+2\sigma}}dx\right)dy\\
&=
\frac{C_{\sigma}}{2} \iint_{\R^2 \times \R^2} \frac{\abs{u(x)-u(y)}^2}{\abs{x-y}^{2+2\sigma}}dydx.
\end{align*}
We then note
\begin{equation}
\label{ss equivalent seminorm}
\norm{u}{\dot{H}^{\sigma}}^2 = \frac{C_{\sigma}}{2} \iint_{\R^2 \times \R^2} \frac{\abs{u(x)-u(y)}^2}{\abs{x-y}^{2+2\sigma}}dydx=[u]_{W^{\sigma,2}}^2.
\end{equation}
Observing that 
$$
\norm{u}{H^m}^2 = \norm{u}{L^2}^2 + \norm{\nabla^mu}{L^2}^2 = \int_{\R^2} (1+\abs{\xi}^{2m})\abs{\hat{u}(\xi)}^2d\xi,$$
and using \eqref{ss equivalent seminorm} allows us to conclude
\begin{align*}
\norm{u}{W^{s,2}}^2 &= \norm{u}{W^{m,2}}^2 + \norm{\nabla^mu}{\dot{H}^{\sigma}}^2 \\ 
&= \int_{\R^2} (1+\abs{\xi}^{2m}+\abs{\xi}^{2s})\abs{\hat{f}(\xi)}^2 d\xi.
\end{align*}
Then we note that there exists two positive constants $C_1,C_2>0$ such that 
$$
C_1 \leq \frac{1+\abs{\xi}^{2m}+\abs{\xi}^{2s}}{(1+\abs{\xi}^2)^s} \leq C_2,
$$
which shows that the two norms are equivalent.
\end{proof}

 \begin{definition}
 \label{zygmund definition} For $r \in \mathbb{R}$ we define the Hölder-Zygmund space $C^r(\R^d)$ to be the set of tempered distributions $f$ such that
 $$
 \norm{f}{C^r} = \sup_{j \geq -1} 2^{jr}\norm{\Delta_j f}{L^{\infty}}<\infty.
 $$
 We also define the homogeneous Hölder-Zygmund space $\dot{C}^r(\R^d)$ with norm
 $$
 \norm{f}{\dot{C}^r} = \sup_{j \in \mathbb{Z}}\norm{\dot{\Delta}_jf}{L^{\infty}}.
 $$
\end{definition}
\begin{definition}
For $m\geq 0$ an integer, we define the Hölder space $\tilde{C}^m(\R^d)$ as
$$
\tilde{C}^m(\R^d) = \{f: D^{\alpha}f \in \tilde{C}(\R^d) \text{ for all } |\alpha| \leq m \text{ and } \norm{f}{\tilde{C}^m}<\infty\},$$
where $\tilde{C}(\R^d)$ is the set of continuous functions on $\R^d$ and 
$$
\norm{f}{\tilde{C}^m}=\sum_{|\alpha|\leq m} \norm{D^{\alpha}f}{L^{\infty}}.
$$
It is important to note that here $D^{\alpha}$ are classical derivatives. If $r>0$ is not an integer then we define 
 $$
 \tilde{C}^r(\R^d)=\{ f \in \tilde{C}^{\floor{r}}(\R^d): \norm{f}{\tilde{C}^r}<\infty\},
 $$ where
 \end{definition}
 \begin{equation}
\norm{f}{\tilde{C}^r} = \sum_{0 \leq |\alpha|\leq \floor{r}} \norm{D^{\alpha}f}{L^{\infty}} + \sum_{|\beta|=\floor{r}} \sup_{x \neq y} \frac{\abs{D^{\beta}f(x)-D^{\beta}f(y)}}{\abs{x-y}^{r-\floor{r}}}.
 \end{equation}
 It is well known that when $r>0$ is a non-integer $C^r(\R^d) = \tilde{C}^r(\R^d)$ (see for example \cite{3}), but if $r$ is an integer we do not have equality; rather we have the inclusion
 $$
\tilde{C}^r(\R^d) \subset C^r(\R^d).
$$

\begin{theorem}
\label{sobolev embedding}(Sobolev Embedding) Suppose that $s>d/2$ is a real number. For any $j \geq 0$, 
$$H_{ul}^{j+s}(\R^d) \hookrightarrow \tilde{C}^j(\R^d).$$ 
\end{theorem}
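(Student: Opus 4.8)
The plan is to bootstrap from the classical Sobolev embedding $H^{j+s}(\R^d) \hookrightarrow \tilde{C}^j(\R^d)$, exploiting the fact that the localizing bump is constant near its center. So I would first record the classical statement: for $s>d/2$ and a multi-index $\alpha$ with $\abs{\alpha}\le j$, the multiplier bound $(1+\abs{\xi}^2)^{s/2}\abs{(i\xi)^{\alpha}}\lesssim(1+\abs{\xi}^2)^{(s+\abs{\alpha})/2}$ together with $\int_{\R^d}(1+\abs{\xi}^2)^{-s}\,d\xi<\infty$ and Cauchy--Schwarz give $\widehat{D^{\alpha}g}\in L^1(\R^d)$ and $\norm{D^{\alpha}g}{L^{\infty}}\lesssim\norm{g}{H^{j+s}}$; hence every $g\in H^{j+s}(\R^d)$ has a representative in $\tilde{C}^j(\R^d)$ with $\norm{g}{\tilde{C}^j}\lesssim\norm{g}{H^{j+s}}$.

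Next I would localize. Fix $x_0\in\R^d$ and apply the classical embedding to $g=\phi_{x_0}f$; by the definition of the $H^s_{ul}$ norm, $g\in H^{j+s}(\R^d)$ with $\norm{g}{H^{j+s}}\le\norm{f}{H^{j+s}_{ul}}$, so $g$ has a $\tilde{C}^j$ representative and $\norm{g}{\tilde{C}^j}\lesssim\norm{f}{H^{j+s}_{ul}}$. Since $\phi_{x_0}\equiv 1$ on the open ball $B_1(x_0)$, we have $f=\phi_{x_0}f$ on $B_1(x_0)$ as distributions, hence $D^{\alpha}f=D^{\alpha}(\phi_{x_0}f)$ on $B_1(x_0)$ for all $\abs{\alpha}\le j$; in particular $D^{\alpha}f$ is continuous near $x_0$ and $\abs{D^{\alpha}f(x_0)}=\abs{D^{\alpha}(\phi_{x_0}f)(x_0)}\le\norm{D^{\alpha}(\phi_{x_0}f)}{L^{\infty}}\le\norm{\phi_{x_0}f}{\tilde{C}^j}\lesssim\norm{f}{H^{j+s}_{ul}}$. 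Letting $x_0$ range over $\R^d$ patches the local representatives into a global element of $\tilde{C}^j(\R^d)$, and taking the supremum over $x_0$ and summing over $\abs{\alpha}\le j$ yields $\norm{f}{\tilde{C}^j}\lesssim\norm{f}{H^{j+s}_{ul}}$.

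If one wants the analogous statement with a non-integer exponent $r$ in place of $j$, the same device handles the Hölder seminorm: for $x\ne y$ with $\abs{x-y}<1/2$, centering the cutoff at the midpoint $z=(x+y)/2$ puts both $x$ and $y$ in $B_1(z)$, so $D^{\beta}f=D^{\beta}(\phi_z f)$ near both points and $\abs{D^{\beta}f(x)-D^{\beta}f(y)}\le\norm{\phi_z f}{\tilde{C}^r}\,\abs{x-y}^{r-\floor{r}}\lesssim\norm{f}{H^{r+s}_{ul}}\,\abs{x-y}^{r-\floor{r}}$ by the classical Sobolev embedding $H^{r+s}\hookrightarrow\tilde{C}^r$, while for $\abs{x-y}\ge 1/2$ one simply uses the $L^{\infty}$ bound on $D^{\beta}f$ already obtained. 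I do not expect a serious obstacle here — the argument is classical Sobolev embedding plus the observation that $\phi_{x_0}$ is identically $1$ near $x_0$ — but the point requiring care is the passage from distributions to genuine functions: one must check that the locally defined $\tilde{C}^j$ representatives coming from different base points $x_0$ agree on overlaps (each coincides with the single distribution $f$ there), so that they glue to one element of $\tilde{C}^j(\R^d)$.
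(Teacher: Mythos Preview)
Your proposal is correct and follows essentially the same route as the paper: apply the classical embedding $H^{j+s}\hookrightarrow\tilde C^{j}$ to $\phi_{x_0}f$, use that $\phi_{x_0}\equiv 1$ near $x_0$ to identify $D^{\alpha}f$ with $D^{\alpha}(\phi_{x_0}f)$ there, and split the H\"older seminorm into near and far pieces. The only cosmetic difference is that for the near piece you center the bump at the midpoint $(x+y)/2$ with threshold $\abs{x-y}<1/2$, whereas the paper keeps the bump centered at $x$ but dilates it to $\phi_{x,2}$ (identically $1$ on $B_2(x)$) with threshold $\abs{x-y}<1$; both devices achieve the same end.
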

\begin{proof}
Let $f \in H_{ul}^{j+s}(\R^d)$, then by the classical Sobolev embedding theorem, 
$$
\sup_{x \in \mathbb{R}^d}\norm{\phi_xf}{\tilde{C}^j} \lesssim \sup_{x \in \R^d}\norm{\phi_xf}{H^{j+s}} = \norm{f}{H^{j+s}_{ul}}.
$$
If $j$ is not an integer, observe that 
\begin{align*}
\norm{f}{\tilde{C}^j}&=\sum_{|\alpha|\leq\floor{j}}\norm{D^{\alpha}f}{L^{\infty}}+\sum_{|\beta|=\floor{j}}\sup_{x \neq y} \frac{\abs{D^{\beta}f(x)-D^{\beta}f(y)}}{\abs{x-y}^{j-\floor{j}}}\\
&\leq 
3\sum_{|\alpha|\leq\floor{j}}\norm{D^{\alpha}f}{L^{\infty}}+\sum_{|\beta|=\floor{j}}\sup_{x \in \R^d}\left\{\sup_{y \in B_1(x)} \frac{\abs{D^{\beta}f(x)-D^{\beta}f(y)}}{\abs{x-y}^{j-\floor{j}}}\right\}\\
&\lesssim 
 \sup_{x \in \R^d}\sum_{|\alpha|\leq\floor{j}}\norm{D^{\alpha}(\phi_{x,2}f)}{L^{\infty}}+\sum_{|\beta|=\floor{j}}\sup_{x\in\R^d}\left\{\sup_{y \in B_1(x)} \frac{\abs{D^{\beta}(\phi_{x,2}f)(x)-D^{\beta}(\phi_{x,2}f)(y)}}{\abs{x-y}^{j-\floor{j}}}\right\}\\
&\lesssim  \sup_{x \in \R^d}\sum_{|\alpha|\leq\floor{j}}\norm{D^{\alpha}(\phi_{x,2}f)}{L^{\infty}}+\sum_{|\beta|=\floor{j}}\sup_{x\in\R^d}\left\{\sup_{x\neq y} \frac{\abs{D^{\beta}(\phi_{x,2}f)(x)-D^{\beta}(\phi_{x,2}f)(y)}}{\abs{x-y}^{j-\floor{j}}}\right\}\\
&=\sup_{x \in \R^d}\norm{\phi_{x,2}f}{\tilde{C}^j}\lesssim \sup_{x \in \R^d} \norm{\phi_xf}{\tilde{C}^j}.
\end{align*}
In the first inequality we used that 
\begin{align*}
\sup_{x \neq y} \frac{\abs{D^{\beta}f(x)-D^{\beta}f(y)}}{\abs{x-y}^{j-\floor{j}}} &\leq 
\sup_{y \in B_1(x)} \frac{\abs{D^{\beta}f(x)-D^{\beta}f(y)}}{\abs{x-y}^{j-\floor{j}}} + \sup_{\abs{x-y} \geq 1} \frac{\abs{D^{\beta}f(x)-D^{\beta}f(y)}}{\abs{x-y}^{j-\floor{j}}}\\ & \leq \sup_{y \in B_1(x)} \frac{\abs{D^{\beta}f(x)-D^{\beta}f(y)}}{\abs{x-y}^{j-\floor{j}}} + 2\norm{D^{\beta}f}{L^{\infty}}. 
\end{align*}
If $j$ is an integer we repeat the previous argument without the semi-norm estimate.
Thus we have shown $\norm{f}{\tilde{C^j}} \lesssim  \norm{f}{H_{ul}^s}$ which concludes the proof.
\end{proof}

\section{Generalized SQG in Uniformly Local Sobolev Spaces}
We aim to show well-posedness of \eqref{gSQG} in $H^s_{ul}(\R^2)$. Our strategy is to create a sequence of approximate solutions as described in the proof of Theorem \ref{Holder Result} and show that the $H_{ul}^s(\R^2)$ norm of the sequence is uniformly bounded. This uniform bound will allow us to pass to the limit and obtain a solution in $H^s_{ul}(\R^2)$. We first need an a-priori estimate on the $H_{ul}^s(\R^2$) norm of smooth solutions to \eqref{gSQG}. Before proving this estimate, we establish several useful lemmas.

\begin{lemma}
\label{Bernstein for Riesz Potentital}
Suppose that $\theta \in \mathcal{S}'(\R^d)$ and $1\leq p \leq \infty$. Then for any $j \in \mathbb{Z}$ and $s>0$ we have 
\begin{equation} 
\norm{\dot{\Delta}_j (-\Delta)^{\frac{s}{2}}\theta}{L^p} \sim 2^{js}\norm{\dot{\Delta}_j \theta}{L^p}.
\end{equation}
\end{lemma}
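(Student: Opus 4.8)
The plan is to use the fact that $\dot\Delta_j$ localizes Fourier support to the annulus $2^j\mathcal{C}$ (radii of order $2^j$), on which the multipliers $|\xi|^{\pm s}$ are smooth and, after rescaling, bounded away from $0$ and $\infty$. Fix a radial function $\widehat{\psi}\in C_0^\infty(\R^d)$ supported in a slightly larger annulus and identically equal to $1$ on the annulus $\mathcal{C}$ carrying $\widehat{\varphi}$, and set $\psi_j(x)=2^{jd}\psi(2^jx)$ so that $\widehat{\psi}_j\,\widehat{\varphi}_j=\widehat{\varphi}_j$. On the Fourier side,
$$
\widehat{\dot\Delta_j(-\Delta)^{s/2}\theta}(\xi)=\widehat{\varphi}_j(\xi)\,\abs{\xi}^s\,\widehat{\theta}(\xi)=\big(\widehat{\psi}(2^{-j}\xi)\abs{\xi}^s\big)\,\widehat{\varphi}_j(\xi)\widehat{\theta}(\xi),
$$
so $\dot\Delta_j(-\Delta)^{s/2}\theta=g_j\ast\dot\Delta_j\theta$ with $\widehat{g_j}(\xi)=\widehat{\psi}(2^{-j}\xi)\abs{\xi}^s$. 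This convolution representation also gives meaning to the left-hand side for an arbitrary $\theta\in\mathcal{S}'(\R^d)$, since $\abs{\xi}^s$ only ever appears multiplied by $\widehat{\psi}(2^{-j}\cdot)$, a genuine Schwartz-class multiplier vanishing near the origin.

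Next I would compute the kernel by scaling. Writing $\abs{\xi}^s=2^{js}\abs{2^{-j}\xi}^s$ gives $\widehat{g_j}(\xi)=2^{js}\,h(2^{-j}\xi)$ with $h(\xi)=\widehat{\psi}(\xi)\abs{\xi}^s\in C_0^\infty(\R^d)$, hence $g_j(x)=2^{js}\,2^{jd}K(2^jx)$ where $K=\mathcal{F}^{-1}h\in\mathcal{S}(\R^d)$. Since the $L^1$ norm is invariant under the dilation $x\mapsto 2^{jd}K(2^jx)$, we get $\norm{g_j}{L^1}=2^{js}\norm{K}{L^1}$, and Young's convolution inequality yields
$$
\norm{\dot\Delta_j(-\Delta)^{s/2}\theta}{L^p}\le\norm{g_j}{L^1}\norm{\dot\Delta_j\theta}{L^p}=2^{js}\norm{K}{L^1}\,\norm{\dot\Delta_j\theta}{L^p},
$$
the $\lesssim$ half of the claim (and if $\dot\Delta_j\theta\notin L^p$ there is nothing to prove, both sides being infinite).

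For the reverse bound I would run the same argument with $\abs{\xi}^{-s}$ in place of $\abs{\xi}^{s}$, which is legitimate precisely because $\widehat{\psi}$ vanishes near the origin. From $\widehat{\varphi}_j(\xi)\widehat{\theta}(\xi)=\big(\widehat{\psi}(2^{-j}\xi)\abs{\xi}^{-s}\big)\,\widehat{\varphi}_j(\xi)\abs{\xi}^s\widehat{\theta}(\xi)$ one gets $\dot\Delta_j\theta=\ell_j\ast\dot\Delta_j(-\Delta)^{s/2}\theta$ with $\widehat{\ell_j}(\xi)=\widehat{\psi}(2^{-j}\xi)\abs{\xi}^{-s}=2^{-js}\tilde h(2^{-j}\xi)$, $\tilde h(\xi)=\widehat{\psi}(\xi)\abs{\xi}^{-s}\in C_0^\infty(\R^d)$, so $\norm{\ell_j}{L^1}=2^{-js}\norm{\mathcal{F}^{-1}\tilde h}{L^1}$. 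Young's inequality then gives $2^{js}\norm{\dot\Delta_j\theta}{L^p}\lesssim\norm{\dot\Delta_j(-\Delta)^{s/2}\theta}{L^p}$, and combining the two bounds proves the equivalence, with implicit constants depending only on $\psi,s,d$ and in particular independent of $j$ and $p$.

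The argument is essentially routine Littlewood–Paley bookkeeping, so there is no serious obstacle; the only point requiring a little care is that $\abs{\xi}^{s}$ is not smooth at the origin (for non-even $s$), which prevents one from naively saying "$(-\Delta)^{s/2}$ commutes with $\dot\Delta_j$" on all of $\mathcal{S}'$. This is circumvented by always working with the composed, harmless multiplier $\widehat{\psi}(2^{-j}\xi)\abs{\xi}^{\pm s}$ and with the explicit Schwartz kernels it produces.
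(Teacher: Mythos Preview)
Your argument is correct and is the standard Littlewood--Paley multiplier proof: localize to an annulus via an auxiliary $\widehat{\psi}$, rescale so that the multiplier $\widehat{\psi}(2^{-j}\xi)|\xi|^{\pm s}$ becomes a fixed $C_0^\infty$ function independent of $j$, and apply Young's inequality with the resulting Schwartz kernel. The paper does not give its own proof of this lemma but simply refers to (A.4) in the appendix of Tao's \emph{Nonlinear Dispersive Equations}; what you have written is precisely the argument one finds there, so your proposal and the paper's (cited) proof are the same.
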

\begin{proof}
See (A.4) in Appendix A from \cite{13}.
\end{proof}

\begin{lemma}
\label{near field Hsul estimate}
Let $\beta \in (0,1)$ and $s>1-\beta$.Suppose $\theta \in L_{ul}^2 \cap \dot{H}_{ul}^{s-1+\beta}(\R^2)$, then 
\begin{equation}
\label{near field homogeneuous velocity estimate}
\norm{\nabla^{\perp}(a \Phi_{\beta})\ast \theta}{\dot{H}_{ul}^s} \lesssim \norm{\theta}{\dot{H}_{ul}^{s-1+\beta}} + \norm{\theta}{L_{ul}^2}.
\end{equation}
\end{lemma}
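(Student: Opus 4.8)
The plan is to estimate the $\dot H^s$ norm of $v := \nabla^\perp(a\Phi_\beta)\ast\theta$ localized by an arbitrary bump $\phi_x$, uniformly in $x\in\R^2$; by translation invariance it suffices to work with $\phi := \phi_0$ and bound $\norm{\phi\, v}{\dot H^s}$. The key point is that the kernel $\nabla^\perp(a\Phi_\beta)$ is compactly supported in $B_2(0)$ and behaves like $|y|^{-(\beta+1)}$ near the origin, i.e.\ it is exactly (up to a smooth compactly supported factor) the kernel of the operator $\nabla^\perp(-\Delta)^{-1+\beta/2} = \mathcal R^\perp(-\Delta)^{(\beta-1)/2}$, which gains $1-\beta$ derivatives. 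Because the kernel has compact support, $(\phi v)(y)$ only depends on $\theta$ restricted to $B_3(0)$, say, so one can freely replace $\theta$ by $\tilde\theta := \psi\theta$ for a fixed cutoff $\psi$ equal to $1$ on $B_3(0)$ and supported in $B_4(0)$; then $\psi\theta \in L^2\cap\dot H^{s-1+\beta}(\R^2)$ with norms controlled by the corresponding $L^2_{ul}$ and $\dot H^{s-1+\beta}_{ul}$ norms of $\theta$, and the problem is reduced to a global (non-uniformly-local) estimate.

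First I would split $\nabla^\perp(a\Phi_\beta) = \nabla^\perp(-\Delta)^{-1+\beta/2}\delta - \nabla^\perp((1-a)\Phi_\beta)$; equivalently, write $\nabla^\perp(a\Phi_\beta)\ast\tilde\theta = \nabla^\perp(-\Delta)^{-1+\beta/2}\tilde\theta - \nabla^\perp((1-a)\Phi_\beta)\ast\tilde\theta$. For the first term, $\nabla^\perp(-\Delta)^{-1+\beta/2}\tilde\theta = \mathcal R^\perp(-\Delta)^{(\beta-1)/2}\tilde\theta$; using Lemma \ref{Bernstein for Riesz Potentital} together with the boundedness of Riesz transforms on each Littlewood--Paley block and the Besov-type characterization of $\dot H^s$ (summing $2^{2js}\norm{\dot\Delta_j\cdot}{L^2}^2$), one gets, for $s>1-\beta$ so that $s-1+\beta>0$,
\[
\norm{\nabla^\perp(-\Delta)^{-1+\beta/2}\tilde\theta}{\dot H^s} \lesssim \norm{\tilde\theta}{\dot H^{s-1+\beta}} + \norm{\tilde\theta}{L^2},
\]
where the low-frequency part is handled by $L^2$ (here one must be a little careful because $(-\Delta)^{(\beta-1)/2}$ is a negative-order operator; splitting at frequency $1$ and using Bernstein on the low frequencies against $\norm{\tilde\theta}{L^2}$ takes care of it). For the second term, $(1-a)\Phi_\beta$ is $C^\infty$ away from the origin, vanishes near $0$, and decays like $|y|^{-\beta}$, but after differentiating once we still have $\nabla^\perp((1-a)\Phi_\beta)$ decaying like $|y|^{-\beta-1}$ at infinity with all derivatives smooth; convolution with $\tilde\theta\in L^2$ compactly supported is harmless, and since the kernel is smooth on $\mathrm{supp}\,\phi$ minus a neighborhood of where $\tilde\theta$ lives... actually more simply: $\nabla^\perp((1-a)\Phi_\beta)$ is a bounded smooth function on $B_4(0)$ with bounded derivatives (it is smooth there since $1-a\equiv 0$ near $0$), so $\nabla^\perp((1-a)\Phi_\beta)\ast\tilde\theta$ restricted to a bounded set is controlled in every $C^k$, hence in $\dot H^s$ after localization, by $\norm{\tilde\theta}{L^2}$.

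Assembling, $\norm{\phi v}{\dot H^s} \lesssim \norm{\phi\,\nabla^\perp(-\Delta)^{-1+\beta/2}\tilde\theta}{\dot H^s} + \norm{\tilde\theta}{L^2}$; to remove the cutoff from the first term I would use that multiplication by the fixed bump $\phi$ is bounded on $\dot H^s$ up to lower-order ($L^2$) corrections — concretely $\norm{\phi g}{\dot H^s}\lesssim \norm{g}{\dot H^s} + \norm{g}{L^2}$ for $g$ supported in a fixed ball — which follows from a paraproduct/commutator estimate or directly from the Sobolev--Slobodeckij characterization of Proposition \ref{SS equivalent norm}. Tracing through, every constant is independent of $x$, giving \eqref{near field homogeneuous velocity estimate}. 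The main obstacle I anticipate is the bookkeeping at low frequencies: $(-\Delta)^{(\beta-1)/2}$ has negative order, so the naive block estimate $2^{js}\norm{\dot\Delta_j\tilde\theta}{L^2}$ coming from Lemma \ref{Bernstein for Riesz Potentital} would need summing $2^{2j(s-1+\beta)}\norm{\dot\Delta_j\tilde\theta}{L^2}^2$ over $j<0$, which is \emph{not} controlled by $\norm{\tilde\theta}{\dot H^{s-1+\beta}}$ when $s-1+\beta$ could be small — this is precisely why the hypothesis includes $\theta\in L^2_{ul}$ and why the $\norm\theta{L^2_{ul}}$ term appears on the right-hand side; handling this cleanly (rather than the high-frequency part, which is routine) is the delicate point, and the compact support of the kernel, used to pass from $\theta$ to the globally-$L^2$ function $\tilde\theta$, is what makes it work.
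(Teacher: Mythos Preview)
Your proposal is correct and would close the lemma, but it takes a different route from the paper. Both arguments begin the same way: use the compact support of $\nabla^\perp(a\Phi_\beta)$ to replace $\theta$ by a localized version $\tilde\theta=\psi\theta$, and handle the outer cutoff $\phi$ by a Leibniz/commutator step that produces the $\norm{\theta}{L^2_{ul}}$ term. The divergence is in how the core estimate $\norm{\nabla^\perp(a\Phi_\beta)\ast g}{\dot H^s}\lesssim\norm{g}{\dot H^{s-1+\beta}}$ is obtained. You split $a\Phi_\beta=\Phi_\beta-(1-a)\Phi_\beta$ and treat the two pieces separately (Riesz transform for the first, smoothness for the second). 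The paper instead proves in one stroke that the Fourier multiplier $\mathcal{F}\big((-\Delta)^{1-\beta/2}(a\Phi_\beta)\big)$ lies in $L^\infty_\xi$, by a direct computation writing it as $|\xi|^{2-\beta}(\hat a\ast|\cdot|^{\beta-2})(\xi)$ and bounding this convolution; Plancherel then gives the $\dot H^s\to\dot H^{s-1+\beta}$ bound immediately, with no splitting and no low-frequency bookkeeping. The paper's approach is a bit slicker for exactly the reason you flagged at the end: it never has to discuss low frequencies at all. Incidentally, your stated worry there is slightly off---the sum $\sum_{j<0}2^{2j(s-1+\beta)}\norm{\dot\Delta_j\tilde\theta}{L^2}^2$ \emph{is} part of $\norm{\tilde\theta}{\dot H^{s-1+\beta}}^2$, so once $\tilde\theta\in\dot H^{s-1+\beta}$ is established there is no additional low-frequency issue; the $L^2_{ul}$ term in the statement genuinely comes from the cutoff-removal step, not from the multiplier estimate.
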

\begin{proof}
We first note that for any $z\in \R^2$  
\begin{equation}
\phi_z(\nabla^{\perp}(a\Phi_{\beta})\ast\theta)=\phi_z(\nabla^{\perp}(a\Phi_{\beta})\ast(\phi_{z,8}\theta)).
\end{equation}
Then applying the Leibniz rule and Hölder's inequality gives
\begin{align*}
\norm{\nabla^{\perp}(a \Phi_{\beta})\ast \theta}{\dot{H}_{ul}^s} &= \norm{(-\Delta)^{\frac{s}{2}}(\phi_z(\nabla^{\perp}(a\Phi_{\beta})\ast(\phi_{z,8}\theta)))}{L^2}\\
&\leq \norm{(-\Delta)^{\frac{s}{2}}\phi_z}{L^{\infty}}\norm{\nabla^{\perp}(a\Phi_{\beta})\ast(\phi_{z,8}\theta)}{L^2}\\ & \hspace{1cm}+ \norm{\phi_z}{L^{\infty}}\norm{(-\Delta)^{\frac{s}{2}}(\nabla^{\perp}(a\Phi_{\beta}) \ast (\phi_{z,8}\theta))}{L^2}\\
&\lesssim \norm{\theta}{L_{ul}^2} + \norm{(-\Delta)^{\frac{s}{2}}(\nabla^{\perp}(a\Phi_{\beta}) \ast (\phi_{z,8}\theta))}{L^2},
\end{align*}
where in the last inequality we used Young's inequality and the fact that $\nabla^{\perp}(a\Phi_{\beta})\in L^1(\R^2)$. Thus, it suffices to show 
$$\norm{\nabla^{\perp}(a\Phi_{\beta}) \ast \theta}{\dot{H}^s}  \lesssim \norm{\theta}{\dot{H}^{s-1+\beta}}. 
$$ 
Using a Littlewood-Paley Decomposition we write
\begin{equation}
\label{LPD pre bernstein}
\norm{\nabla^{\perp}(a\Phi_{\beta})\ast \theta}{\dot{H}^s}^2 = \sum_{j \in \mathbb{Z}} 2^{2js}\norm{\dot{
\Delta}_j \nabla^{\perp}(a\Phi_{\beta})\ast \theta}{L^2}^2.
\end{equation}
Since $\mathcal{F}\left(\dot{
\Delta}_j \nabla^{\perp}(a\Phi_{\beta})\ast \theta\right)(\xi)$ is supported away from the origin we note 
\begin{align*}
\norm{\dot{
\Delta}_j \nabla^{\perp}(a\Phi_{\beta})\ast \theta}{L^2} &=  \norm{(-\Delta)^{1-\frac{\beta}{2}}(a\Phi_{\beta})\ast(\dot{\Delta}_j(-\Delta)^{\frac{\beta}{2}-1}\nabla^{\perp}\theta)}{L^2} \\
&=
\norm{\mathcal{F}\left((-\Delta)^{1-\frac{\beta}{2}}(a\Phi_{\beta})\right)(\xi)\mathcal{F}\left(\dot{\Delta}_j(-\Delta)^{\frac{\beta}{2}-1}\nabla^{\perp}\theta\right)(\xi)}{L_{\xi}^2} \\
&\leq
\norm{\mathcal{F}\left((-\Delta)^{1-\frac{\beta}{2}}(a\Phi_{\beta})\right)}{L_{\xi}^{\infty}}\norm{\dot{\Delta}_j(-\Delta)^{\frac{\beta}{2}-1}\nabla^{\perp}\theta}{L^2} \\
&\lesssim 2^{j(-1+\beta)}\norm{\mathcal{F}\left((-\Delta)^{1-\frac{\beta}{2}}(a\Phi_{\beta})\right)}{L_{\xi}^{\infty}}\norm{\dot{\Delta}_j\theta}{L^2}.
\end{align*}
Substituting the last inequality into \eqref{LPD pre bernstein} we see that 
$$
\norm{\nabla^{\perp}(a\Phi_{\beta})\ast\theta}{\dot{H}^s} \leq \norm{\mathcal{F}\left((-\Delta)^{1-\frac{\beta}{2}}(a\Phi_{\beta})\right)}{L_{\xi}^{\infty}} \norm{\theta}{\dot{H}^{s-1+\beta}}.
$$
It remains to show $\mathcal{F}\left((-\Delta)^{1-\frac{\beta}{2}}(a\Phi_{\beta})\right) \in L_{\xi}^{\infty}(\R^2)$. Fix $\xi \in \R^2$, then 
\begin{align*}
\left|\mathcal{F}\left((-\Delta)^{1-\frac{\beta}{2}}(a\Phi_{\beta})\right)(\xi)\right|&= \left| C\abs{\xi}^{2-\beta}\int_{\R^2}\hat{a}(\eta-\xi)\abs{\eta}^{\beta-2}d\eta \right| \\
&\leq \int_{\R^2} \abs{\hat{a}(\eta-\xi)}\frac{\abs{(\xi-\eta)+\eta}^{2-\beta}}{\abs{\eta}^{2-\beta}}d\eta \\
&\lesssim_{\beta} \int_{\R^2}\abs{\hat{a}(\eta-\xi)}\frac{\abs{\eta-\xi}^{2-\beta}+\abs{\eta}^{2-\beta}}{\abs{\eta}^{2-\beta}}d\eta\\
&\lesssim 
\norm{\hat{a}}{L_{\xi}^1} + \int_{\R^2}\frac{\abs{\hat{a}(\eta-\xi)}\abs{\eta-\xi}^{2-\beta}}{\abs{\eta}^{2-\beta}}d\eta.
\end{align*}
In the second inequality we used Jensen's inequality to see $\abs{x+y}^r \leq 2^{r-1}\left(\abs{x}^r+\abs{y}^r\right)$ for $r>1$, and in the third inequality we utilized the  translation invariance of the $L^1$ norm. Continuing, we see that 
\begin{align*}
\int_{\R^2}\frac{\abs{\hat{a}(\eta-\xi)}\abs{\eta-\xi}^{2-\beta}}{\abs{\eta}^{2-\beta}}d\eta &\leq
\int_{|\eta|< 1}\frac{\abs{\hat{a}(\eta-\xi)}\abs{\eta-\xi}^{2-\beta}}{\abs{\eta}^{2-\beta}}d\eta\\ &\hspace{1cm} + \int_{|\eta| \geq 1}\frac{\abs{\hat{a}(\eta-\xi)}\abs{\eta-\xi}^{2-\beta}}{\abs{\eta}^{2-\beta}}d\eta\\
&\leq
\norm{\hat{a}(\eta-\xi)\abs{\eta-\xi}^{2-\beta}}{L_{\eta}^{\infty}}\norm{\abs{\eta}^{-2+\beta}}{L_{\eta}^1(B_1(0))} \\ &\hspace{.5cm}+ \norm{\abs{\eta}^{-2+\beta}}{L_{\eta}^{\infty}(\R^2\setminus B_1(0))}\norm{\hat{a}(\eta-\xi){\abs{\eta-\xi}^{2-\beta}}}{L_{\eta}^1}\\&\leq C<\infty.
\end{align*}
In the second inequality we used Hölder's inequality, while, to get the third inequality, we used the translation invariance of the $L^{\infty}$ and $L^1$ norm, and that $\hat{a}$ is a Schwartz function.
\end{proof}
A minor modification to the above argument yields the following result in non-homogeneous spaces.
\begin{lemma}
\label{near field nonhomogeneuous velocity estimate}
Let $\beta \in (0,1)$ and $s>1-\beta$. Suppose $\theta \in H^{s-1+\beta}_{ul}(\R^2)$, then 
$$
\norm{\nabla^{\perp}(a\Phi_{\beta})\ast \theta}{H^{s}_{ul}}\lesssim \norm{\theta}{H^{s-1+\beta}_{ul}}.
$$
\end{lemma}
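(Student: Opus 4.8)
The plan is to deduce the inhomogeneous estimate directly from its homogeneous counterpart, Lemma~\ref{near field Hsul estimate}, together with a one-line $L^2_{ul}$ bound, rather than rerunning the localized computation. The starting point is the elementary Fourier-multiplier comparison $(1+\abs{\xi}^2)^s \sim 1+\abs{\xi}^{2s}$, valid for every $s>0$, which yields $\norm{h}{H^s} \sim \norm{h}{L^2}+\norm{h}{\dot{H}^s}$ with constants depending only on $s$. Applying this with $h=\phi_x g$ and taking the supremum over $x\in\R^2$ in the definitions of the uniformly local norms gives
$$
\norm{g}{H^s_{ul}} \sim \norm{g}{L_{ul}^2} + \norm{g}{\dot{H}^s_{ul}}.
$$
Hence it suffices to bound $\norm{\nabla^{\perp}(a\Phi_{\beta})\ast\theta}{L_{ul}^2}$ and $\norm{\nabla^{\perp}(a\Phi_{\beta})\ast\theta}{\dot{H}^s_{ul}}$ separately by $\norm{\theta}{H^{s-1+\beta}_{ul}}$. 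The same comparison, applied now with exponent $s-1+\beta>0$, shows $H^{s-1+\beta}_{ul}(\R^2)\hookrightarrow L_{ul}^2(\R^2)\cap\dot{H}^{s-1+\beta}_{ul}(\R^2)$ with $\norm{\theta}{L_{ul}^2}+\norm{\theta}{\dot{H}^{s-1+\beta}_{ul}}\lesssim\norm{\theta}{H^{s-1+\beta}_{ul}}$, so in particular the hypotheses of Lemma~\ref{near field Hsul estimate} are satisfied.

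For the homogeneous part, Lemma~\ref{near field Hsul estimate} gives at once
$$
\norm{\nabla^{\perp}(a\Phi_{\beta})\ast\theta}{\dot{H}^s_{ul}} \lesssim \norm{\theta}{\dot{H}^{s-1+\beta}_{ul}} + \norm{\theta}{L_{ul}^2} \lesssim \norm{\theta}{H^{s-1+\beta}_{ul}},
$$
where the assumption $s>1-\beta$ is what allows the residual $L_{ul}^2$ norm to be absorbed into the $H^{s-1+\beta}_{ul}$ norm. For the $L_{ul}^2$ part I would reuse the localization from the proof of Lemma~\ref{near field Hsul estimate}: since $\supp(a\Phi_{\beta})\subset B_2(0)$, for every $x\in\R^2$ one has $\phi_x(\nabla^{\perp}(a\Phi_{\beta})\ast\theta)=\phi_x(\nabla^{\perp}(a\Phi_{\beta})\ast(\phi_{x,8}\theta))$. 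Because $\nabla^{\perp}(a\Phi_{\beta})$ is compactly supported and of size $\abs{y}^{-1-\beta}$ near the origin, it lies in $L^1(\R^2)$ for $\beta<1$, so Young's inequality gives
$$
\norm{\phi_x(\nabla^{\perp}(a\Phi_{\beta})\ast\theta)}{L^2} \leq \norm{\nabla^{\perp}(a\Phi_{\beta})}{L^1}\,\norm{\phi_{x,8}\theta}{L^2} \lesssim \norm{\theta}{L_{ul}^2},
$$
the last step since $\supp\phi_{x,8}$ is covered by a fixed number of unit balls independently of $x$ (equivalently, by the remark giving an equivalent $L_{ul}^2$ norm with $\lambda=8$). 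Taking the supremum over $x$ and combining with the homogeneous estimate finishes the proof.

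Since essentially all the analysis is already contained in Lemma~\ref{near field Hsul estimate}, I do not expect a genuine obstacle here; the only points that warrant care are verifying that the inhomogeneous $H^s_{ul}$ norm decouples, uniformly in the localization parameter, into the $L_{ul}^2$ norm plus the homogeneous $\dot{H}^s_{ul}$ seminorm, and — relatedly — that the leftover $\norm{\theta}{L_{ul}^2}$ produced by Lemma~\ref{near field Hsul estimate} is controlled, which is precisely where $s>1-\beta$ is used. One could alternatively mimic the proof of Lemma~\ref{near field Hsul estimate} verbatim, replacing $(-\Delta)^{s/2}$ by the Bessel potential $J^s$ and the homogeneous Leibniz rule by the Kato--Ponce commutator estimate of Proposition~\ref{bessel commutator}, but that route requires an $L^\infty$ bound on $\nabla^{\perp}(a\Phi_{\beta})\ast(\phi_{x,8}\theta)$ and is therefore less convenient across the whole range $s>1-\beta$.
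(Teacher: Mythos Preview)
Your argument is correct. The paper itself does not give a proof of this lemma; it simply states that ``a minor modification to the above argument'' (meaning the proof of Lemma~\ref{near field Hsul estimate}) yields the inhomogeneous version. Your reduction via the norm equivalence $\norm{g}{H^s_{ul}}\sim\norm{g}{L^2_{ul}}+\norm{g}{\dot{H}^s_{ul}}$ is a clean way to supply those details: it isolates exactly the one new ingredient (the low-frequency $L^2_{ul}$ bound from Young's inequality and $\nabla^{\perp}(a\Phi_{\beta})\in L^1$) and reuses Lemma~\ref{near field Hsul estimate} wholesale for the rest. The alternative you sketch at the end --- rerunning the localized computation with $J^s$ in place of $(-\Delta)^{s/2}$, or equivalently adding the $j=-1$ block to the Littlewood--Paley sum --- is presumably what the paper has in mind by ``minor modification,'' and it leads to the same place; your observation that the Kato--Ponce route would introduce an unwanted $L^\infty$ term is accurate but can be sidestepped by using the product-form Leibniz estimate $\norm{J^s(fg)}{L^2}\lesssim\norm{J^sf}{L^\infty}\norm{g}{L^2}+\norm{f}{L^\infty}\norm{J^sg}{L^2}$, exactly as in the homogeneous proof. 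Either way the content is the same, and your version is arguably the more transparent packaging.
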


We now prove an a-priori estimate on the $H^s_{ul}$ norm of smooth solutions to \eqref{gSQG}.
\begin{theorem}
\label{Hsul apriori}
Suppose that $s > 2$ and $(u,\theta)$ is a smooth solution to \eqref{gSQG} on $[0,T]$ as in Theorem \ref{Holder Result}, with Hölder exponent $r=s+2$. Then for any $t \in [0,T]$,
$$
\norm{\theta(t)}{H_{ul}^s}\leq \norm{\theta^0}{H_{ul}^s}\exp{\left(\int_0^t \left(\norm{u(\tau)}{\tilde{C}^1} + \norm{\theta(\tau)}{W^{1,\infty}}\right)d\tau\right)}.
$$
\end{theorem}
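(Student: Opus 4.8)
The plan is to carry out a localized $L^2$ energy estimate at the level of $s$ derivatives, using the Bessel potential $J^s$ rather than a fractional Laplacian so that the uniformly local structure is respected, and then to close with Grönwall's lemma. Fix $z\in\R^2$; since $\|\cdot\|_{H^s_{ul}}$ is the supremum over $z$ of $\|\phi_z\,\cdot\,\|_{H^s}$ (and the cutoff scale may be changed freely), it is enough to control $\|J^s(\phi_z\theta(t))\|_{L^2}$ with constants independent of $z$. Multiplying the transport equation by $\phi_z$ and using $\phi_z(u\cdot\nabla\theta)=u\cdot\nabla(\phi_z\theta)-\theta\,u\cdot\nabla\phi_z$ gives
$$\partial_t(\phi_z\theta)+u\cdot\nabla(\phi_z\theta)=\theta\,u\cdot\nabla\phi_z.$$
Applying $J^s$, pairing with $J^s(\phi_z\theta)$ in $L^2$, and writing $J^s(u\cdot\nabla(\phi_z\theta))=u\cdot\nabla J^s(\phi_z\theta)+[J^s,u\cdot\nabla](\phi_z\theta)$, one obtains
\begin{align*}
\tfrac12\tfrac{d}{dt}\|J^s(\phi_z\theta)\|_{L^2}^2 &= -\langle u\cdot\nabla J^s(\phi_z\theta),\,J^s(\phi_z\theta)\rangle-\langle [J^s,u\cdot\nabla](\phi_z\theta),\,J^s(\phi_z\theta)\rangle\\
&\quad+\langle J^s(\theta\,u\cdot\nabla\phi_z),\,J^s(\phi_z\theta)\rangle.
\end{align*}
Because $u$ is divergence free and $J^s(\phi_z\theta)$ decays at infinity (being $J^s$ of a compactly supported function), an integration by parts annihilates the first term on the right-hand side.

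For the last term I would write $\theta\,u\cdot\nabla\phi_z=(\phi_{z,4}\theta)(u\cdot\nabla\phi_z)$, valid since $\nabla\phi_z$ is supported where $\phi_{z,4}\equiv1$, and apply the Kato--Ponce product inequality (an immediate consequence of Proposition \ref{bessel commutator}) keeping $\phi_{z,4}\theta$ or $u\cdot\nabla\phi_z$ in $L^\infty$; this bounds $\|J^s(\theta\,u\cdot\nabla\phi_z)\|_{L^2}$ by $\|\theta\|_{L^\infty}\|u\|_{H^s_{ul}}+\|u\|_{L^\infty}\|\theta\|_{H^s_{ul}}$ up to constants depending only on $\phi$. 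The commutator term is the crux. I would split $u=\phi_{z,R}u+(1-\phi_{z,R})u$ for a fixed large $R$. Since $\nabla(\phi_z\theta)$ is supported in $B_2(z)$ while $(1-\phi_{z,R})$ vanishes there, the contribution of $(1-\phi_{z,R})u$ to $[J^s,u\cdot\nabla](\phi_z\theta)$ comes only through $(1-\phi_{z,R})u\cdot\nabla J^s(\phi_z\theta)$; pairing with $J^s(\phi_z\theta)$ and using $\operatorname{div} u=0$ again, this reduces to $\tfrac12\int(u\cdot\nabla\phi_{z,R})|J^s(\phi_z\theta)|^2$, an integral over $\{R\le|y-z|\le2R\}$. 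Exploiting that $\phi_z\theta$ is supported in $B_2(z)$, so that $J^s(\phi_z\theta)$ restricted to $\{|y-z|\ge R\}$ is governed by a rapidly decaying off-diagonal tail of the Bessel kernel times $\|\phi_z\theta\|_{H^s}$, one fixes $R$ once and for all so that this term is $\lesssim\|u\|_{L^\infty}\|\theta\|_{H^s_{ul}}\|J^s(\phi_z\theta)\|_{L^2}$. For the remaining near piece $[J^s,\phi_{z,R}u\cdot\nabla](\phi_z\theta)=\sum_i[J^s,\phi_{z,R}u_i]\partial_i(\phi_z\theta)$ the function $\phi_{z,R}u_i$ is now compactly supported, so Proposition \ref{bessel commutator} applies componentwise and gives a bound by $\|\nabla(\phi_{z,R}u)\|_{L^\infty}\|J^{s-1}\nabla(\phi_z\theta)\|_{L^2}+\|J^s(\phi_{z,R}u)\|_{L^2}\|\nabla(\phi_z\theta)\|_{L^\infty}$, hence (using $s>1$) by $\lesssim_R\|u\|_{\tilde{C}^{1}}\|\theta\|_{H^s_{ul}}+\|u\|_{H^s_{ul}}\|\theta\|_{W^{1,\infty}}$.

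It remains to absorb the velocity norms into $\theta$. Inserting the Serfati-type identity for $u$, the near-field term $\nabla^{\perp}(a\Phi_\beta)\ast(\theta-\theta^0)$ is controlled by Lemma \ref{near field nonhomogeneuous velocity estimate}, which yields $\|\nabla^{\perp}(a\Phi_\beta)\ast(\theta-\theta^0)\|_{H^s_{ul}}\lesssim\|\theta-\theta^0\|_{H^{s-1+\beta}_{ul}}\lesssim\|\theta\|_{H^s_{ul}}+\|\theta^0\|_{H^s_{ul}}$ since $s-1+\beta<s$; the far-field term is a convolution against $\nabla\nabla^{\perp}((1-a)\Phi_\beta)$, which lies in $L^1(\R^2)$ precisely because $\beta>0$ renders $|x|^{-2-\beta}$ integrable at infinity, so it contributes $\lesssim\int_0^t\|\theta u\|_{H^s_{ul}}\,d\tau\lesssim\int_0^t(\|\theta\|_{L^\infty}\|u\|_{H^s_{ul}}+\|u\|_{L^\infty}\|\theta\|_{H^s_{ul}})\,d\tau$. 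A preliminary Grönwall argument (or working with the combined quantity $\|\theta\|_{H^s_{ul}}+\|u\|_{H^s_{ul}}$) then bounds $\|u\|_{H^s_{ul}}$ by $\|\theta\|_{H^s_{ul}}$ and the data. Collecting all the estimates gives $\tfrac{d}{dt}\|J^s(\phi_z\theta)\|_{L^2}\lesssim(\|u\|_{\tilde{C}^{1}}+\|\theta\|_{W^{1,\infty}})\|\theta\|_{H^s_{ul}}$; integrating in $t$ and taking the supremum over $z$ (legitimate since the right-hand side does not depend on $z$) yields the stated inequality. I expect the main obstacle to be the commutator term, both in reconciling the nonlocality of $J^s$ with the lack of spatial decay of $u$ — which forces the two-cutoff decomposition and the decaying-tail estimate for $J^s$ of a compactly supported function — and in genuinely closing the loop by estimating $\|u\|_{H^s_{ul}}$ through the Serfati identity and Lemma \ref{near field nonhomogeneuous velocity estimate}.
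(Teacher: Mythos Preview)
Your localized energy estimate is sound in spirit and the commutator handling is a legitimate alternative to the paper's. The paper does \emph{not} leave $u$ bare in the commutator; instead it first writes $\phi_z(u\cdot\nabla\theta)=\phi_z u\cdot\nabla(\phi_{z,2}\theta)$, commutes $J^s$ with the compactly supported coefficient $\phi_z u$ via Kato--Ponce, and then introduces a \emph{second} commutator $[J^s,\phi_z](\phi_{z,2}\theta)$ to deal with the residual transport term. Your route---keeping $\phi_z\theta$ intact, splitting $u=\phi_{z,R}u+(1-\phi_{z,R})u$, and controlling the far piece through the exponential off-diagonal decay of the Bessel kernel---avoids the second commutator at the cost of invoking a pseudodifferential fact (rapid decay of $J^s(\phi_z\theta)$ away from $\supp\phi_z$). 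Both work; the paper's is more elementary, yours is more direct.

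The real divergence is in how you bound $\|u\|_{H^s_{ul}}$. You feed in the Serfati identity, which yields a bound with a time integral and with $\|u^0\|_{H^s_{ul}}$ appearing. Two problems arise. First, your claim that the far-field convolution contributes $\lesssim\int_0^t\|\theta u\|_{H^s_{ul}}$ presumes that convolution with an $L^1$ kernel preserves $H^s_{ul}$, which is not immediate (the operator is nonlocal and $\theta u$ has no decay); one should rather use the fractional Leibniz rule on $\phi_z\cdot(K\ast(\theta u))$ together with $J^sK\in L^1$ to get the sharper bound $\lesssim\|\theta\|_{L^\infty}\|u\|_{L^\infty}$. Second, even with that fix, substituting your velocity bound into the $\theta$ inequality produces inhomogeneous terms of the form $\|u^0\|_{H^s_{ul}}\|\theta\|_{W^{1,\infty}}\|\theta\|_{H^s_{ul}}$, so after Gr\"onwall you obtain an estimate depending on $\|u^0\|_{H^s_{ul}}$ rather than the stated bound involving only $\|\theta^0\|_{H^s_{ul}}$. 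The paper circumvents this entirely: it uses the frequency equality $\dot\Delta_j u=\dot\Delta_j\nabla^\perp(-\Delta)^{-1+\beta/2}\theta$ (inherited from the H\"older theory) to derive a \emph{pointwise-in-time} representation $(-\Delta)^{r/2}u=\nabla^\perp(a\Phi_\beta)\ast(-\Delta)^{r/2}\theta+(-\Delta)^{r/2}(\nabla^\perp((1-a)\Phi_\beta))\ast\theta$ (after showing the polynomial defect is constant), and from this obtains $\|u\|_{H^s_{ul}}\lesssim\|\theta\|_{H^{s-1+\beta}_{ul}}+\|u\|_{\tilde C^1}$ with no time integral and no reference to $u^0$. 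Substituting this directly closes Gr\"onwall to the stated inequality.
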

\begin{proof}
We multiply \eqref{gSQG}$_1$ by $\phi_z$, apply the operator $J^s$, and take an inner product against $J^s(\phi_z \theta)$ to see that 
\begin{equation}
\label{eq0}
\frac{1}{2}\frac{d}{dt}\norm{J^s(\phi_z\theta)}{L^2}^2=- (J^s(\phi_z(u\cdot\nabla\theta)),J^s(\phi_z\theta))_{L^2}.
\end{equation}
Due to the support of $\phi_z$ we see that $J^s(\phi_z(u\cdot\nabla\theta))=J^s(\phi_z(u\cdot\nabla(\phi_{z,2}\theta)))$. We then add $(\phi_zu\cdot J^s(\nabla(\phi_{z,2}\theta)),J^s(\phi_z\theta))_{L^2}$ to both sides of \eqref{eq0} to see that 
\begin{equation}
\label{earliest apriori hsul eqn}
\frac{1}{2}\frac{d}{dt}\norm{J^s(\phi_z\theta)}{L^2}^2 + (\phi_z u \cdot \nabla J^s(\phi_{z,2}\theta),J^s(\phi_z \theta))_{L^2} = (F,J^s(\phi_z\theta))_{L^2},
\end{equation}
where $F= \phi_zu\cdot J^s(\nabla(\phi_{z,2}\theta))-J^s(\phi_zu\cdot\nabla(\phi_{z,2}\theta))$. By the Cauchy-Schwarz inequality and Proposition \ref{bessel commutator}
\begin{align*}
(F,J^s(\phi_z\theta))_{L^2}&\leq
\norm{J^s(\phi_z\theta)}{L^2}\norm{\phi_z u\cdot J^s(\nabla(\phi_{z,2}\theta))-J^s(\phi_zu\cdot(\nabla(\phi_{z,2}\theta)))}{L^2}\\
&\leq 
\sup_{z \in \R^2} \norm{J^s(\phi_z\theta)}{L^2} \norm{\nabla(\phi_zu)}{L^{\infty}}\norm{J^{s-1}(\nabla(\phi_{z,2}\theta))}{L^2}\\ &\hspace{.5cm}+\sup_{z \in \R^2} \norm{J^s(\phi_z\theta)}{L^2}\norm{J^s(\phi_z u)}{L^2}\norm{\nabla(\phi_{z,2}\theta)}{L^{\infty}}\\
&\leq
\norm{\theta}{H^s_{ul}}^2\norm{u}{\tilde{C}^1}+\norm{\theta}{H_{ul}^s}\norm{u}{H^s_{ul}}\norm{\theta}{\tilde{C}^1}.
\end{align*}
We next estimate $(\phi_z u \cdot \nabla J^s(\phi_{z,2}\theta),J^s(\phi_z \theta))_{L^2}$ in \eqref{earliest apriori hsul eqn}. Note that by the product rule,
\begin{equation}
\label{reverse product rule for Hsul apriori}
\phi_z u \cdot \nabla J^s(\phi_{z,2}\theta) = u\cdot\nabla(\phi_zJ^s(\phi_{z,2}\theta))-(u\cdot\nabla\phi_z)J^s(\phi_{z,2}\theta).
\end{equation}
The first term on the right hand side of \eqref{reverse product rule for Hsul apriori} does not vanish when integrated against $J^s(\phi_{z}\theta)$. Instead, we utilize the support of $\phi_z$ and introduce another commutator as follows:
\begin{align*}
&(u\cdot\nabla(\phi_zJ^s(\phi_{z,2}\theta)),J^s(\phi_z\theta))_{L^2} = \int_{\R^2} u\cdot\nabla(\phi_zJ^s(\phi_{z,2}\theta)) J^s(\phi_z\theta) dx\\
&=\int_{\R^2} u\cdot\nabla(\phi_zJ^s(\phi_{z,2}\theta))(J^s(\phi_z\phi_{z,2}\theta)-\phi_zJ^s(\phi_{z,2}\theta)+\phi_zJ^s(\phi_{z,2}\theta))dx\\
&=
\int_{\R^2}u\cdot\nabla(\phi_zJ^s(\phi_{z,2}\theta))\phi_zJ^s(\phi_{z,2}\theta) dx + \int_{\R^2}u\cdot\nabla(\phi_zJ^s(\phi_{z,2}\theta))[J^s,\phi_z](\phi_{z,2}\theta) dx\\
&= I_1 + I_2,
\end{align*}
where we define
\begin{align*}
I_1 & = \int_{\R^2}u\cdot\nabla(\phi_zJ^s(\phi_{z,2}\theta))\phi_zJ^s(\phi_{z,2}\theta) dx,\\
I_2 &= \int_{\R^2}u\cdot\nabla(\phi_zJ^s(\phi_{z,2}\theta))[J^s,\phi_z](\phi_{z,2}\theta) dx.
\end{align*}
By the chain rule and integration by parts, 
$$
I_1 = -\frac{1}{2} \int_{\R^2} (\text{div }u)(\phi_zJ^s(\phi_{z,2}\theta))^2 dx = 0.
$$
To estimate $I_2$, we note that 
$$
u\cdot\nabla(\phi_zJ^s(\phi_{z,2}\theta)) = u\phi_{z,2}\cdot\nabla(\phi_zJ^s(\phi_{z,2}\theta)),
$$
integrate by parts and use that $u$ is divergence free. This gives 
\begin{align*}
I_2 &= -\int_{\R^2} \phi_z J^s(\phi_{z,2}\theta)(u\cdot\nabla \phi_{z,2})[J^s,\phi_z](\phi_{z,2}\theta) dx\\ & \hspace{.5cm}- \int_{\R^2} \phi_z J^s(\phi_{z,2}\theta)u\phi_{z,2}\cdot \nabla \left([J^s,\phi_z](\phi_{z,2}\theta)\right)dx \\
&= -(I_{21}+I_{22}),
\end{align*}
where we define 
\begin{align*}
I_{21} &= \int_{\R^2} \phi_z J^s(\phi_{z,2}\theta)(u\cdot\nabla \phi_{z,2})[J^s,\phi_z](\phi_{z,2}\theta) dx \\
I_{22} &= \int_{\R^2} \phi_z J^s(\phi_{z,2}\theta)u\phi_{z,2}\cdot \nabla \left([J^s,\phi_z](\phi_{z,2}\theta)\right)dx.
\end{align*}
Applying the Cauchy-Schwarz inequality, the Sobolev embedding theorem,  and Proposition \ref{bessel commutator} we find that 
\begin{align*}
I_{21} &\leq  \sup_{z \in \R^2} \norm{\phi_zJ^s(\phi_{z,2}\theta)u\cdot \nabla \phi_{z,2}}{L^2}\norm{[J^s,\phi_z](\phi_{z,2}\theta)}{L^2}\\
&\lesssim \norm{u}{L^{\infty}}\norm{\theta}{H^s_{ul}}\sup_{z \in \R^2}\left(\norm{J^{s-1}(\phi_{z,2}\theta)}{L^2}\norm{\nabla\phi_z}{L^{\infty}}+\norm{J^s\phi_z}{L^2}\norm{\phi_{z,2}\theta}{L^{\infty}}\right)\\
&\lesssim
\norm{\theta}{H_{ul}^s}^2 \norm{u}{L^{\infty}}.
\end{align*}
The term $I_{22}$ is more involved. Again by the Cauchy-Schwarz inequality, 
\begin{align*}
I_{22} &\leq \norm{\theta}{H_{ul}^s}\norm{u}{L^{\infty}}\norm{\nabla\left(J^s(\phi_z\phi_{z,2}\theta)-\phi_{z}J^s(\phi_{z,2}\theta)\right)}{L^2}.
\end{align*}
The product rule and Proposition \ref{bessel commutator} give 
\begin{align*}
\norm{\nabla([J^s,\phi_z]\phi_{z,2}\theta)}{L^2} 
&\leq \norm{J^s((\nabla\phi_z)\phi_{z,2}\theta) - (\nabla \phi_z)J^s(\phi_{z,2}\theta)}{L^2}\\
& \hspace{.5cm}+ \norm{J^s(\phi_z\nabla(\phi_{z,2}\theta)-\phi_zJ^s(\nabla(\phi_{z,2}\theta)}{L^2}
\\
&\lesssim \norm{\phi_z}{\tilde{C}^2}\norm{J^{s-1}(\phi_{z,2}\theta)}{L^2}+\norm{\phi_{z,2}\theta}{L^{\infty}}\norm{J^{s}(\nabla\phi_z)}{L^2} \\
&  \hspace{.5cm}+ \norm{J^{s-1}(\nabla(\phi_{z,2}\theta))}{L^2}\norm{\nabla \phi_z}{L^{\infty}}+\norm{J^s\phi_z}{L^2}\norm{\nabla(\phi_{z,2}\theta)}{L^{\infty}}\\
&\lesssim \norm{\theta}{H_{ul}^s} + \norm{\theta}{\tilde{C}^1}.
\end{align*}
Applying Theorem \ref{sobolev embedding} we find that $$I_{22} \lesssim \norm{\theta}{H_{ul}^s}^2\norm{u}{L^{\infty}}.$$
Hence,
\begin{equation}
\label{pre gronwall and integration Hsul}
\frac{1}{2}\frac{d}{dt}\norm{\theta}{H_{ul}^s}^2 \lesssim \norm{\theta}{H_{ul}^s}^2\norm{u}{\tilde{C}^1} + \norm{\theta}{H_{ul}^s}\norm{u}{H_{ul}^s}\norm{\theta}{W^{1,\infty}}.
\end{equation}
Then integrating \eqref{pre gronwall and integration Hsul} in time yields
\begin{equation}
\label{a-priori pre velocity estimate Hsul}
\norm{\theta(t)}{H_{ul}^s}^2 \lesssim \norm{\theta_0}{H_{ul}^s}^2 + \int_0^t \left (\norm{\theta(\tau)}{H_{ul}^s}^2\norm{u(\tau)}{\tilde{C}^1} + \norm{\theta(\tau)}{H_{ul}^s}\norm{u(\tau)}{H_{ul}^s}\norm{\theta(\tau)}{W^{1,\infty}} \right) d\tau.
\end{equation}
It remains to estimate $\norm{u(\tau)}{H_{ul}^s}$ and then apply Grönwall's lemma to close the estimate. Recall that $(u,\theta)$ satisfy \eqref{frequency equality in Holder space}. In particular we have
$$
\dot{\Delta}_j u = \dot{\Delta}_j\nabla^{\perp}(-\Delta)^{-1+\frac{\beta}{2}}\theta \hspace{1cm} \text{ for all } j \in \mathbb{Z}.
$$ 
So for $r' \in (0,s]$ we have that 
$$
\dot{\Delta}_j(-\Delta)^{\frac{r'}{2}}u = \dot{\Delta}_j (-\Delta)^{-1+\frac{\beta+r'}{2}}\theta.
$$
Note that if $r'>1-\beta$ then $(-\Delta)^{\frac{r'}{2}}(\nabla^{\perp}((1-a)\Phi_{\beta})) \in L^1(\R^2))$. We fix $r' \in (1-\beta,s]$, then there exists some polynomial $P$ such that 
\begin{equation}
\label{eq1} (-\Delta)^{\frac{r'}{2}}u = P + \nabla^{\perp}(a\Phi_{\beta})\ast (-\Delta)^{\frac{r'}{2}}\theta + (-\Delta)^{\frac{r'}{2}}(\nabla^{\perp}((1-a)\Phi_{\beta}))\ast \theta.
\end{equation}
We claim that $(-\Delta)^{\frac{r'}{2}}u \in L^{\infty}$. Indeed since $u \in C^{r+1-\beta}$ one has 
\begin{align*}
\norm{(-\Delta)^{\frac{r'}{2}}u}{L^{\infty}}&\leq \sum_{j \geq -1} \norm{\Delta_j(-\Delta)^{\frac{r'}{2}}u}{L^{\infty}}\\
&\leq \sum_{j \geq -1} 2^{j(r'-r-1+\beta)}2^{j(r+1-\beta)}\norm{\Delta_ju}{L^{\infty}}\\
&\leq \sup_{j \geq -1} \left\{ 2^{j(r+1-\beta)}\norm{\Delta_ju}{L^{\infty}}\right\} \sum_{j \geq -1} 2^{j(r'-r-1+\beta)}\\
&\lesssim \norm{u}{C^{r+1-\beta}}.
\end{align*}
In the last inequality we used that $r'<r$ and $-1+\beta<0$, so $r'-r-1+\beta<0$. An identical argument shows $(-\Delta)^{\frac{r'}{2}}\theta \in L^{\infty}$. Applying Young's convolution inequality, we conclude that the polynomial $P$ is constant. So, for any $r \in (r',s]$, 
\begin{equation}
\label{hg estimate apriori}
(-\Delta)^{\frac{r}{2}} u = \nabla^{\perp}(a\Phi_{\beta}) \ast (-\Delta)^{\frac{r}{2}}\theta + (-\Delta)^{\frac{r}{2}}(\nabla^{\perp}((1-a)\Phi_{\beta})) \ast \theta,
\end{equation}
where above we used that the fractional Laplacian of any constant in zero.
Upon taking the $\dot{H}_{ul}^{s-r}$ norm of both sides and applying Lemma \ref{near field Hsul estimate} we find that 
$$
\norm{u}{\dot{H}_{ul}^{s}} \lesssim \norm{\theta}{\dot{H}_{ul}^{s-1+\beta}}+\norm{\theta}{L^2_{ul}}+\norm{\theta}{L^{\infty}} \lesssim \norm{\theta}{H_{ul}^{s-1+\beta}}.
$$
where in the first inequality we used the embedding $L^{\infty}(\R^2) \hookrightarrow L^2_{ul}(\R^2)$ and Young's inequality to conclude
$$
\norm{\nabla^{\perp}((1-a)\Phi_{\beta}) \ast \theta}{\dot{H}^s_{ul}}\leq \norm{\phi}{L^{\infty}}\norm{(-\Delta)^{s/2}(\nabla^{\perp}((1-a)\Phi_{\beta}))}{L^1}\norm{\theta}{L^{\infty}},
$$
and in the second we used the Sobolev embedding theorem. Applying Proposition \ref{SS equivalent norm} we find
$$
\norm{u}{H_{ul}^s} \sim \norm{u}{W_{ul}^{\floor{s},2}} + \norm{u}{\dot{H}_{ul}^s} \lesssim \norm{\theta}{H_{ul}^{s-1+\beta}}+\sum_{\abs{\alpha}=0}^{\floor{s}} \norm{D^{\alpha} u}{L_{ul}^2}.
$$
Applying an identical argument that was used to obtain \eqref{eq1}, we can also show that there exists some polynomial $\tilde{P}$ such that 
\begin{equation}
\label{eq2}
D^{\alpha} u = \tilde{P} + \nabla^{\perp}(a\Phi_{\beta}) \ast D^{\alpha} \theta + D^{\alpha} \nabla^{\perp}((1-a)\Phi_{\beta}) \ast \theta,
\end{equation}
where $D^{\alpha}$ is a differential operator with $1 \leq \abs{\alpha}\leq \floor{s}$. Since $u,\theta \in \tilde{C}^1$ we see that the polynomial $\tilde{P}$ is a constant, so for $2 \leq \abs{\alpha} \leq \floor{s}$,  
\begin{equation}
\label{eq3}
D^{\alpha} u = \nabla^{\perp}(a\Phi_{\beta}) \ast D^{\alpha} \theta + D^{\alpha} \nabla^{\perp}((1-a)\Phi_{\beta}) \ast \theta.
\end{equation}
Summing \eqref{eq3} for $\abs{\alpha}=2,\cdots,\floor{s}$  and applying Lemma \ref{near field nonhomogeneuous velocity estimate} yields
$$
\sum_{\abs{\alpha}=2}^{\floor{s}} \norm{D^{\alpha} u}{L_{ul}^2} \lesssim \norm{\theta}{H_{ul}^{s-1+\beta}}+\norm{\theta}{L^{\infty}}\lesssim \norm{\theta}{H^{s-1+\beta}_{ul}}.
$$
Estimating the lower order derivatives by $\norm{u}{L_{ul}^2} + \norm{\nabla u}{L_{ul}^2} \lesssim  \norm{u}{\tilde{C}^1}$ gives
 \begin{equation}
 \label{velocitu hsul estimate}
 \norm{u}{H^s_{ul}} \lesssim \norm{\theta}{H_{ul}^{s-1+\beta}}+\norm{u}{\tilde{C}^1}.
 \end{equation}
 Then substituting \eqref{velocitu hsul estimate} into \eqref{a-priori pre velocity estimate Hsul} gives
\begin{align*}
\norm{\theta(t)}{H_{ul}^s}^2 & \lesssim \norm{\theta^0}{H_{ul}^s}^2 + \int_0^t \norm{\theta(\tau)}{H_{ul}^s}^2(\norm{u(\tau)}{\tilde{C}^1} + \norm{\theta(\tau)}{W^{1,\infty}})d\tau \\
&+\int_0^t \norm{\theta(\tau)}{H_{ul}^s}\norm{u(\tau)}{L^{\infty}}\norm{\theta(\tau)}{W^{1,\infty}}d\tau.
\end{align*}
Using the Sobolev embedding theorem once again we find 
$$
\norm{\theta(t)}{H_{ul}^s}^2 \lesssim \norm{\theta^0}{H_{ul}^s}^2 + \int_0^t \norm{\theta(\tau)}{H_{ul}^s}^2(\norm{u(\tau)}{\tilde{C}^1} + \norm{\theta(\tau)}{W^{1,\infty}})d\tau.
$$
Applying Grönwall's Lemma then yields the desired result.
\end{proof} 
\begin{theorem}
\label{Hsul Result}
Let $s>2$ and $\beta \in (0,1)$. Assume $\theta^0 \in H_{ul}^s(\R^2)$ and $u^0\in H_{ul}^{s+1-\beta}(\R^2)$ satisfy
$$
u^0 = \nabla^{\perp}(-\Delta)^{-1+\beta/2}\theta^0 \hspace{.5cm} \text{ in } \dot{C}^{\alpha}(\R^2),
$$
for some $\alpha \in (1,s-1)$. Then there exists a $T>0$ and a unique solution $(u,\theta)$ to 
\begin{equation*}
\begin{cases}
    \partial_t \theta + u \cdot \nabla \theta=0\\
    (u,\theta)\vert_{t=0}=(u^0,\theta^0)
\end{cases}
\end{equation*}
satisfying
\begin{align*}
\theta &\in L^{\infty}([0,T];H_{ul}^s(\R^2)) \cap Lip([0,T];H_{ul}^{s-1}(\R^2)) \\
u &\in L^{\infty}([0,T]; H_{ul}^{s+1-\beta}(\R^2)). 
\end{align*}
Furthermore $(u,\theta)$ satisfies the Serfati-type identity
\begin{equation}
\label{Sobolev Serfati Identity}
u(t)=u^0 + \nabla^{\perp}(a\Phi_{\beta}) \ast (\theta(t)-\theta^0) - \int_0^t \nabla\nabla^{\perp}((1-a)\Phi_{\beta}) \ast\cdot (\theta u )(\tau)d\tau.
\end{equation}
\end{theorem}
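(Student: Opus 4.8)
The plan is to follow the scheme indicated in the paper: build smooth approximate solutions from the Hölder--Zygmund theory of Theorem~\ref{Holder Result}, propagate uniform $H^s_{ul}$ bounds on a common time interval using Theorem~\ref{Hsul apriori}, pass to the limit by a compactness argument, and obtain uniqueness from the Serfati-type identity \eqref{Sobolev Serfati Identity}. Concretely, let $\rho_n$ be a standard mollifier, $\theta^0_n=\rho_n\ast\theta^0$, $u^0_n=\rho_n\ast u^0$. Then $\theta^0_n\in C^\infty(\R^2)\subset C^{s+2}(\R^2)$, and since $\rho_n$ commutes with the Fourier multiplier $\nabla^\perp(-\Delta)^{-1+\beta/2}$ one has $u^0_n=\nabla^\perp(-\Delta)^{-1+\beta/2}\theta^0_n$ in $\dot C^\alpha(\R^2)$; moreover convolution with $\rho_n$ is bounded on the uniformly local spaces uniformly in $n$, so $\norm{\theta^0_n}{H^s_{ul}}\lesssim\norm{\theta^0}{H^s_{ul}}$ and $\norm{u^0_n}{H^{s+1-\beta}_{ul}}\lesssim\norm{u^0}{H^{s+1-\beta}_{ul}}$. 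By Theorem~\ref{Holder Result} with Hölder exponent $r=s+2$ each pair $(u^0_n,\theta^0_n)$ generates a smooth solution $(u_n,\theta_n)$ on a maximal interval $[0,T^\ast_n)$, and on that interval Theorem~\ref{Hsul apriori} applies. Since $s>2$ (so $s+1-\beta>2$), Theorem~\ref{sobolev embedding} gives $\norm{\theta_n}{W^{1,\infty}}\lesssim\norm{\theta_n}{H^s_{ul}}$ and $\norm{u_n}{\tilde C^1}\lesssim\norm{u_n}{H^{s+1-\beta}_{ul}}$; taking the $H^{s+1-\beta}_{ul}$ norm of the Serfati identity and invoking Lemma~\ref{near field nonhomogeneuous velocity estimate} (with exponent $s+1-\beta$) together with the fact that $\nabla\nabla^\perp((1-a)\Phi_\beta)$ is smooth and decays like $\abs{x}^{-2-\beta}$, hence lies in $L^1(\R^2)$ with all derivatives, yields
$$
\norm{u_n(t)}{H^{s+1-\beta}_{ul}}\lesssim\norm{u^0_n}{H^{s+1-\beta}_{ul}}+\norm{\theta_n(t)}{H^s_{ul}}+\norm{\theta^0_n}{H^s_{ul}}+\int_0^t\norm{(\theta_nu_n)(\tau)}{L^\infty}\,d\tau.
$$
Feeding this back into the a priori estimate and bounding $\norm{\theta_nu_n}{L^\infty}$ by the uniform data produces a closed integral inequality for $\norm{\theta_n}{H^s_{ul}}$, so there are $T>0$ and $M>0$, depending only on $\norm{\theta^0}{H^s_{ul}}$ and $\norm{u^0}{H^{s+1-\beta}_{ul}}$, with $\norm{\theta_n}{H^s_{ul}}+\norm{u_n}{H^{s+1-\beta}_{ul}}\le M$ on $[0,\min(T,T^\ast_n)]$. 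Since then $\int_0^{\min(T,T^\ast_n)}\norm{\nabla u_n}{L^\infty}\,d\tau<\infty$, the standard transport continuation criterion forces $T^\ast_n>T$, so all $(u_n,\theta_n)$ are defined on $[0,T]$ with the uniform bound $M$.

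\textbf{Compactness and passage to the limit.} From $\partial_t\theta_n=-u_n\cdot\nabla\theta_n$ and the algebra property of $H^{s-1}_{ul}$ ($s>2$), $(\partial_t\theta_n)$ is bounded in $L^\infty([0,T];H^{s-1}_{ul})$, so $(\theta_n)$ is bounded in $L^\infty([0,T];H^s_{ul})\cap\mathrm{Lip}([0,T];H^{s-1}_{ul})$. An Aubin--Lions--Simon compactness argument on balls, with a diagonal extraction, gives a subsequence such that $\theta_n\to\theta$ in $C([0,T];H^{s'}_{loc})$ for every $s'<s$, and likewise $u_n\to u$; this is strong enough to pass to the limit in $u_n\cdot\nabla\theta_n$, in the far-field integral of the Serfati identity (continuous in $\theta_nu_n$ on bounded sets because its kernel is in $L^1$), and in the near-field term (convolution with $\nabla^\perp(a\Phi_\beta)\in L^1$). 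Hence $(u,\theta)$ solves the transport equation with data $(u^0,\theta^0)$ and satisfies \eqref{Sobolev Serfati Identity}. Weak-$\ast$ lower semicontinuity of the $H^s_{ul}$ and $H^{s+1-\beta}_{ul}$ norms under local convergence gives $\theta\in L^\infty([0,T];H^s_{ul})$ and $u\in L^\infty([0,T];H^{s+1-\beta}_{ul})$, and then $\partial_t\theta=-u\cdot\nabla\theta\in L^\infty([0,T];H^{s-1}_{ul})$ gives $\theta\in\mathrm{Lip}([0,T];H^{s-1}_{ul})$. Finally, since $\dot\Delta_j u^0=\dot\Delta_j\nabla^\perp(-\Delta)^{-1+\beta/2}\theta^0$ for all $j$, frequency-localizing \eqref{Sobolev Serfati Identity} shows that $\dot\Delta_j u(t)=\dot\Delta_j\nabla^\perp(-\Delta)^{-1+\beta/2}\theta(t)$ is propagated, so $u=\nabla^\perp(-\Delta)^{-1+\beta/2}\theta$ in $\dot C^\alpha(\R^2)$.

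\textbf{Uniqueness.} Given two such solutions $(u_1,\theta_1)$, $(u_2,\theta_2)$, set $\delta\theta=\theta_1-\theta_2$, $\delta u=u_1-u_2$, so that $\partial_t\delta\theta+u_1\cdot\nabla\delta\theta=-\delta u\cdot\nabla\theta_2$ with $\delta\theta(0)=0$. An $L^2_{ul}$ energy estimate, handling the localization commutators as in the proof of Theorem~\ref{Hsul apriori} but at the $L^2$ level, gives $\frac{d}{dt}\norm{\delta\theta}{L^2_{ul}}\lesssim\norm{\nabla u_1}{L^\infty}\norm{\delta\theta}{L^2_{ul}}+\norm{\nabla\theta_2}{L^\infty}\norm{\delta u}{L^2_{ul}}$. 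Subtracting the two Serfati identities and using $\delta\theta(0)=0$, the boundedness of the near- and far-field operators on $L^2_{ul}$ (both kernels lie in $L^1(\R^2)$ — here $\beta<1$ is used so that $\nabla^\perp(a\Phi_\beta)\in L^1$), and the $L^\infty$ bounds on $u_1,\theta_2$, yields $\norm{\delta u(t)}{L^2_{ul}}\lesssim\norm{\delta\theta(t)}{L^2_{ul}}+\int_0^t(\norm{\delta\theta}{L^2_{ul}}+\norm{\delta u}{L^2_{ul}})\,d\tau$. Grönwall's Lemma then forces $\delta\theta\equiv0$, hence $\delta u\equiv0$, on $[0,T]$.

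\textbf{Expected main obstacle.} The crux is Step~1: extracting the $H^{s+1-\beta}_{ul}$ velocity bound from the nonlocal Serfati identity — which requires the near-field control of $\nabla^\perp(a\Phi_\beta)\ast(\cdot)$ and the Bessel-potential commutator already handled in Theorem~\ref{Hsul apriori} — and matching it against the transport blow-up criterion so that the per-$n$ existence times $T^\ast_n$ can be bootstrapped down to a single $T>0$ depending only on the data. Once these uniform bounds are in hand, the compactness and uniqueness arguments are essentially routine.
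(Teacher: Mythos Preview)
Your proof follows the paper's scheme (smooth approximations via Theorem~\ref{Holder Result}, uniform $H^s_{ul}$ bounds from Theorem~\ref{Hsul apriori}, local compactness, Gr\"onwall uniqueness), with two tactical differences worth noting. For the uniform existence time, rather than applying Theorem~\ref{Holder Result} at $r=s+2$ and then invoking a continuation criterion to bootstrap $T_n^\ast>T$ (a criterion not stated in Theorem~\ref{Holder Result}, which you would have to extract separately from its proof), the paper applies it at the \emph{low} exponent $r=\alpha\in(1,s-1)$: since $H^s_{ul}\hookrightarrow C^\alpha$, the quantities $\norm{u_n^0}{L^\infty}+\norm{\theta_n^0}{C^\alpha}$ are bounded uniformly in $n$, so \eqref{holder time} and \eqref{bound on holder norms of solution} deliver a common $T>0$ together with uniform $\norm{u_n}{\tilde C^1}$ and $\norm{\theta_n}{W^{1,\infty}}$ bounds on $[0,T]$ directly, and these feed straight into Theorem~\ref{Hsul apriori} without the coupled nonlinear ODE you set up. For uniqueness, the paper works in $L^\infty$ rather than $L^2_{ul}$: writing the equation for $\delta\theta$ in Lagrangian coordinates along the flow of $u_1$ gives $\norm{\delta\theta(t)}{L^\infty}\le\int_0^t\norm{\delta u}{L^\infty}\norm{\nabla\theta_2}{L^\infty}\,d\tau$ with no localization commutators to handle. (The paper also regularizes with the Littlewood--Paley cutoffs $S_{n+1}$ rather than mollifiers, but this is inessential.) Your route is correct; the paper's is shorter and more self-contained.
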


\begin{proof}
\textbf{Approximating Sequence and Uniform Bounds}
We begin by taking the same approximating sequence as in the proof of Theorem \ref{Holder Result}. Then, since for each $n$, $u_n^0=S_{n 
 +1}u^0$ and $\theta_n^0=S_{n+1}\theta^0$, by Lemma \ref{Lp norm of Snf} and associativity of convolution one has that 
\begin{equation}
\label{Sobolev Uniform bound on initial H}
\norm{u_n^0}{H_{ul}^{s+1-\beta}} \lesssim \norm{u^0}{H_{ul}^{s+1-\beta}} \text{  and  } 
\norm{\theta_n^0}{H_{ul}^s} \lesssim \norm{\theta^0}{H_{ul}^s}.
\end{equation}
We first show there exists a single $T>0$ such that $(u_n)$ and $(\theta_n)$ are uniformly bounded in $L^{\infty}([0,T];H_{ul}^{s+1-\beta}(\R^2))$ and $L^{\infty}([0,T];H_{ul}^s(\R^2))$, respectively. Making use of \eqref{Sobolev Uniform bound on initial H} and the embedding $H_{ul}^{s}(\R^2) \hookrightarrow \tilde{C}^{\alpha}(\R^2)$ for $s>2$ and $\alpha \in (1,s-1)$, we see that $(u_n^0)$ is uniformly bounded in $C^{\alpha+1-\beta}(\R^2)$ and $(\theta_n^0)$ is uniformly bounded in $C^{\alpha}(\R^2)$. Thus by Theorem \ref{Holder Result}, $u_n,\theta_n$ exist in $C^{\alpha+1-\beta}(\R^2)$ and $C^{\alpha}(\R^2)$ respectively on $[0,T_n]$ with $(u_n,\theta_n)$ satisfying \eqref{Holder gSQG}, where (see \eqref{holder time})
\begin{equation}
\label{hsul time}
T_n < \frac{1}{C(\norm{u_n^0}{L^{\infty}} + \norm{\theta_n^0}{C^{\alpha}})}.
\end{equation}
By \eqref{Sobolev Uniform bound on initial H}, the above denominator is uniformly bounded in $n$. Thus, we choose a $T>0$ such that $T\leq T_n$ and
$$
\frac{1}{2C} \leq T_n(\norm{u_n^0}{L^{\infty}}+\norm{\theta_n^0}{C^{\alpha}})\leq T(\norm{u^0}{L^{\infty}}+\norm{\theta^0}{C^{\alpha}}) < \frac{1}{C},
$$
where $C$ is as in \eqref{hsul time}. Since $T\leq T_n$, by Theorem \ref{Holder Result} we have that for each $n$, $(u_n,\theta_n)$ is a solution to \eqref{gSQG} in the sense of Theorem \ref{Holder Result} on $[0,T]$, and by \eqref{bound on holder norms of solution}
$$
\norm{u_n}{C([0,T];L^{\infty})}+\norm{\theta_n}{C([0,T];C^{\alpha})} \lesssim \frac{\norm{u^0}{L^{\infty}}+\norm{\theta^0}{C^{\alpha}}}{1-CT(\norm{u^0}{L^{\infty}}+\norm{\theta^0}{C^{\alpha}})}.
$$
Then by Lemma \ref{Velocity Holder Estimate} $\norm{u_n}{C([0,T];C^{\alpha+1-\beta})}\lesssim \norm{u_n}{C([0,T];L^{\infty})}+\norm{\theta_n}{C([0,T];C^{\alpha})}$, so $(u_n),(\theta_n)$ are uniformly bounded in $C([0,T];C^{\alpha+1-\beta}(\R^2))$ and $C([0,T];C^{\alpha}(\R^2))$ respectively. Thus applying Theorem \ref{Hsul apriori} we find that there exists $C>0$ such that
$$
\norm{\theta_n}{C([0,T];H_{ul}^s)}\leq C.
$$
Using \eqref{velocitu hsul estimate}, we also see that there exists a constant $C_1>0$ satisfying
$$
\norm{u_n}{C([0,T];H_{ul}^{s+1-\beta})} \leq C_1.
$$
\\
\textbf{($\phi_R\theta_n$) is Cauchy.}
For ease of notation in what follows, we set $\phi_R=\phi_{0,R}$ where $\phi$ is the localization function in the definition of $H_{ul}^s(\R^2)$. Our next goal is to show that $(\phi_R\theta_n)$ is a Cauchy sequence in $C([0,T];H^{s-1}(\R^2))$ for every $R>0$. For each $n \in \mathbb{N}$
\begin{equation}
\label{Hsul eq1}
\partial_t\theta_n+u_n\cdot \nabla \theta_n=0,
\end{equation}
thus, multiplying \eqref{Hsul eq1} by $\phi_R$, taking the $H^{s-1}$ norm, and utilizing the uniform bounds on $\norm{u_n}{H_{ul}^{s-1}}$ and $\norm{\theta_n}{H_{ul}^s}$ yields
\begin{equation}
\label{Hsul eq2}
\norm{\phi_R\partial_t\theta_n}{H^{s-1}} = \norm{\phi_Ru_n\cdot \nabla \theta_n}{H^{s-1}} \leq C(R)\norm{u_n}{H_{ul}^{s-1}}\norm{\theta_n}{H_{ul}^s} \leq C(R).
\end{equation}
where we used that $H^{\gamma}_{ul}$ is a Banach algebra for $\gamma>1$.
For each $t \in [0,T]$ we have $\norm{\theta_n(t)}{H_{ul}^s}$ is uniformly bounded in $n$, so by the Rellich-Kondrachov Theorem, for each $R>0$ and $t>0$ there exists a subsequence of $(\phi_R\theta_n(t))$ which converges in $H^{s-1}(\R^2)$. A standard diagonalization argument then shows that for each $t>0$, there is a subsequence of $(\theta_n(t))$ (which we relabel to $(\theta_n(t))$) such that for every $R>0$ the sequence $(\phi_R\theta_n(t))$ converges in $H^{s-1}(\R^2)$.
\\
\\
We next aim to find a subsequence which converges for all $t\in [0,T]$. We will utilize the compactness of the time interval $[0,T]$ and \eqref{Hsul eq2}. By \eqref{Hsul eq2}, given $\eps>0$, there exists some $\delta>0$, such that whenever $\abs{t-s}<\delta$ we have 
\begin{equation}
\label{Hsul eq3}
\norm{\phi_R\theta_n(t)-\phi_R\theta_n(s)}{H^{s-1}}< \eps/3.
\end{equation}
We then partition our time interval $[0,T]$ into $0=t_0<t_1\hdots<t_N=T$ where $t_i-t_{i-1}<\delta$ for all $i=1,\hdots,N$. Note, since $\{t_i\}_{i=0}^n$ is a finite collection, we can find a further subsequence (which we relabel again) $(\phi_R\theta_n)$ that converges in $H^{s-1}(\R^2)$ for all $R>0$ and each $t_i$ in our partition. Thus there exists some $N \in \mathbb{N}$ such that whenever $n,m \geq N$ we have 
\begin{equation}
\label{Hsul eq4}
\norm{\phi_R\theta_n(t_i)-\phi_R\theta_m(t_i)}{H^{s-1}} < \eps/3,
\end{equation}
$\text{for each }i \text{ with } 0 \leq i \leq n.$
For $n,m \geq N$ and for each $t \in [0,T]$, we can choose $t_i$ satisfying $\abs{t_i-t}<\delta$ and apply \eqref{Hsul eq3} and \eqref{Hsul eq4} to conclude that
\begin{multline}
\norm{\phi_R\theta_n(t)-\phi_R\theta_m(t)}{H^{s-1}} \leq \norm{\phi_R\theta_n(t)-\phi_R\theta_n(t_i)}{H^{s-1}}\\
+ \norm{\phi_R\theta_n(t_i)-\phi_R\theta_m(t_i)}{H^{s-1}}+\norm{\phi_R\theta_m(t_i)-\phi_R\theta_m(t)}{H^{s-1}}<\eps.
\end{multline}
Thus $(\phi_R\theta_n)$ is a Cauchy sequence in $C([0,T];H^{s-1}(\R^2))$, so there exists some $\theta$ such that $\phi_R\theta_n \to \phi_{2R}\theta$  in $C([0,T];H^{s-1}(\R^2))$ for all $R>0$. To see why the limit must be of the form $\phi_{2R}\theta$, note that $H^{s-1}\hookrightarrow L^{\infty}$ so the convergence is uniform. Thus, the limit of $(\phi_R\theta_n)$ cannot have larger support than $(\phi_R\theta_n)$.
\\
\\
\textbf{$(\phi_Ru_n)$ is Cauchy.}
We next aim to show that $(\phi_Ru_n)$ is Cauchy in $C([0,T];H^{s-\beta}(\R^2))$. Utilizing the uniform bound on $\norm{u_n(t)}{H_{ul}^{s+1-\beta}}$, the Rellich-Kondrachov theorem, and a standard diagonalization argument we can find some subsequence of $(\phi_Ru_n(t))$ that converges in $H^{s-\beta}(\R^2)$ for every $R>0$. It remains to show that we can find a subsequence that works for all $t\in [0,T]$. To do this, we will make use of the Serfati-type identity for $u_n$. Observe that
\begin{equation}
\label{Hsul eqn5}
u_n(t)-u_n(s) = \nabla^{\perp}(a\Phi_{\beta}) \ast (\theta_n(t)-\theta_n(s)) - \int_s^t (\nabla\nabla^{\perp}((1-a)\Phi_{\beta})\ast\cdot(\theta_nu_n))(\tau)d\tau.
\end{equation}
Taking the $H^{s-\beta}$ norm of \eqref{Hsul eqn5} and applying Lemma \ref{near field nonhomogeneuous velocity estimate} we find that for each $R>0$
\begin{multline}
\label{Hsul eqn6}
\norm{\phi_Ru_n(t)-\phi_Ru_n(s)}{H^{s-\beta}} \leq \norm{\phi_R\left(\nabla^{\perp}( a\Phi_{\beta})\ast(\phi_{8R}\theta_n(t)-\phi_{8R}\theta_n(s))\right)}{H^{s-\beta}}\\
+
\int_s^t \norm{J^{s-\beta}\phi_R\nabla\nabla^{\perp}((1-a)\Phi_{\beta})\ast \cdot (\theta_nu_n)(\tau)}{L^2} d\tau
\\
\lesssim_{R,\beta} \norm{\phi_{8R}\theta_n(t)-\phi_{8R}\theta_n(s)}{H^{s-1}}  
+ \int_s^t \norm{J^{s-\beta}\phi_R\nabla\nabla^{\perp}((1-a)\Phi_{\beta})\ast \cdot (\theta_nu_n)(\tau)}{L^2} d\tau.
\end{multline}
Handling the last integral requires care. By the Leibniz rule note that
\begin{align*}
\norm{J^{s-\beta}(\phi_R\nabla\nabla^{\perp}((1-a)(\Phi_{\beta})\ast \cdot (\theta_nu_n))}{L^2}& \lesssim \norm{J^{s-\beta}\phi_R}{L^2}\norm{\nabla\nabla^{\perp}((1-a)\Phi_{\beta})\ast \cdot (\theta_nu_n)}{L^{\infty}}\\ & \hspace{-4.5cm}+\norm{J^{s-\beta}\nabla\nabla^{\perp}((1-a)\Phi_{\beta})\ast \cdot (\theta_nu_n)}{L^{\infty}}\norm{\phi_R}{L^2}\\ &\hspace{-5cm}\lesssim_R \norm{\nabla\nabla^{\perp}((1-a)\Phi_{\beta})\ast \cdot (\theta_nu_n)}{L^{\infty}}
+\norm{J^{s-\beta}\nabla\nabla^{\perp}((1-a)\Phi_{\beta})\ast \cdot (\theta_nu_n)}{L^{\infty}}
\\
&\hspace{-5cm}\lesssim_{R,\beta} \norm{\theta_n}{L^{\infty}}\norm{u_n}{L^{\infty}}.
\end{align*}
Continuing the estimate from \eqref{Hsul eqn6}, we find that 
\begin{align*}
\norm{\phi_Ru_n(t)-\phi_Ru_n(s)}{H^{s-\beta}}&\lesssim_{R,\beta}  \norm{\phi_{8R}\theta_n(t)-\phi_{8R}\theta_n(s)}{H^{s-1}}\\& \hspace{.5cm}+\sup_{\tau \in [s,t]}\left\{ \norm{\theta_n(\tau)}{L^{\infty}}\norm{u_n(\tau)}{L^{\infty}}\right\}|t-s|
\\ &\lesssim_R |t-s| + \sup_{\tau \in [s,t]}\left\{ \norm{\theta_n(\tau)}{L^{\infty}}\norm{u_n(\tau)}{L^{\infty}}\right\}|t-s|,
\end{align*}
where in the last inequality we used \eqref{Hsul eq2}.
Therefore, given some $\eps>0$, there exists some $\delta>0$, such that whenever $\abs{t-s}<\delta$ we have
\begin{equation}
\norm{\phi_Ru_n(t)-\phi_Ru_n(s)}{H^{s-\beta}} < \eps.
\end{equation}
We take advantage of the compactness of the time interval $[0,T]$ and repeat the same argument we made to show that $(\phi_R\theta_n)$ is Cauchy in $C([0,T];H^{s-1}(\R^2))$. We conclude that $(\phi_Ru_n)$ is Cauchy in $C([0,T];H^{s-\beta}(\R^2))$, thus there exists some $u$ such that $\phi_Ru_n \to \phi_{2R} u$ in  $C([0,T];H^{s-\beta}(\R^2))$ for all $R>0$.
\\
\\
\textbf{$(u,\theta)$ satisfies Theorem \ref{Hsul Result} }
We next show that $(u,\theta)$ satisfies Theorem \ref{Hsul Result}. We first note that given $R>0$, for $n,m\in \mathbb{N}$ one has 
\begin{multline}
\label{Hsul eqn7}
\phi_R\partial_t(\theta_n-\theta_m) = \phi_R(u_n-u_m)\cdot \nabla \theta_m + \phi_Ru_n\cdot \nabla (\theta_n-\theta_m)\\
= \phi_R(u_n-u_m)\cdot(\phi_{2R}\nabla \theta_m) + \phi_Ru_n\cdot \phi_{2R}\nabla(\theta_n-\theta_m).
\end{multline}
Thus, taking the $H^{s-2}$ norm of \eqref{Hsul eqn7} yields
\begin{align*}
\norm{\phi_R(\partial_t\theta_n-\partial_t\theta_m)}{H^{s-2}} &\leq \norm{\phi_R(u_n-u_m)\cdot(\phi_{2R}\nabla \theta_m)}{H^{s-2}}\\ &\hspace{-1cm}+ \norm{\phi_Ru_n\cdot \phi_{2R}\nabla(\theta_n-\theta_m)}{H^{s-2}}\\
&\hspace{-2.5cm}\leq 
\norm{\phi_R(u_n-u_m)}{H^{s-2}}\norm{\phi_{2R}\nabla\theta_m}{L^{\infty}}+\norm{\phi_R(u_n-u_m)}{L^{\infty}}\norm{\phi_{2R}\nabla\theta_m}{H^{s-2}}\\
&\hspace{-2.5cm}+\norm{\phi_Ru_n}{H^{s-2}}\norm{\phi_{2R}\nabla(\theta_n-\theta_m)}{L^{\infty}}+\norm{\phi_Ru_n}{L^{\infty}}\norm{\phi_{2R}\nabla(\theta_n-\theta_m)}{H^{s-2}}.
\end{align*}
Then since $\norm{\phi_{2R}\nabla\theta_m}{L^{\infty}}, \norm{\phi_{2R}\nabla\theta_m}{H^{s-2}}, \norm{\phi_Ru_n}{H^{s-2}}$ and $\norm{\phi_Ru_n}{L^{\infty}}$ are uniformly bounded in $n$ on $[0,T]$, and since $(\phi_Ru_n)$ and $(\phi_R\theta_n)$ are Cauchy in $C([0,T];H^{s-\beta}(\R^2))$  and $C([0,T];H^{s-1}(\R^2))$ respectively, we conclude that 
\begin{equation}
\label{Hsul eqn8}
\lim_{N\to \infty} \sup_{n,m\geq N} \norm{\phi_R(\partial_t\theta_n-\partial_t\theta_m)}{H^{s-2}}=0.
\end{equation}
Thus $(\phi_{R}\partial_t\theta_n)$ is Cauchy in $C([0,T];H^{s-2}(\R^2))$. Recall that $\phi_R\theta_n \to \phi_R\theta$ in $C([0,T]\times \R^2)$, so we also have $\phi_{R}\theta_n \to \phi_R\theta$ in $\mathcal{D}'([0,T]\times \R^2)$ which implies that $\phi_{R}\partial_t\theta_n \to \phi_{R}\partial_t\theta$ in $\mathcal{D}'([0,T]\times \R^2)$. By the uniqueness of limits, this allows us to conclude that for all $R>0$, $\phi_{R}\partial_t\theta_n \to \phi_{R}\partial_t\theta$ in $C([0,T];H^{s-2}(\R^2))$. Recall that for all $n\in \mathbb{N}$ one has 
\begin{equation}
\label{Hsul eqn8}
\phi_{R}\partial_t\theta_n = -\phi_R u_n \cdot (\phi_{2R}\nabla \theta_n).
\end{equation}
Passing to the limit in \eqref{Hsul eqn8} in the $H^{s-2}$ norm allows us to conclude that $\phi_{R}\partial_t\theta=-\phi_{R} u \cdot \nabla \theta$ in $H^{s-2}(\R^2)$.
\\
\\
We next show that $\theta \in L^{\infty}([0,T];H_{ul}^s(\R^2))$. Recall that for each $z \in \R^2, n \in \mathbb{N}$, and $ t \in [0,T]$ one has 
\begin{equation}
\label{Hsul eq9}
\norm{\phi_z\theta_n(t)}{H^s} \leq C.
\end{equation}
Thus there is a subsequence of $(\phi_z\theta_n(t))$ depending on $z$ and $t$ (which we relabel as $(\phi_z\theta_n(t))$), that converges weak-* in $H^s(\R^2)$. We also have shown that for every $R>0$ and $t\in [0,T]$ one has $\phi_{0,R}\theta_n(t) \to \phi_{0,R}\theta(t)$ in $H^{s-1}(\R^2)$. Given $z$, we choose $R>0$ big enough so that $\phi_z=\phi_z\phi_{0,R}$, thus for this $R$ we have $\phi_z\theta_n(t) \to \phi_z\theta(t)$ in $H^{s-1}(\R^2)$. By uniqueness of limits, we also have that $\phi_z\theta_n(t)$ converges weak-* to $\phi_z\theta(t)$ in $H^s(\R^2)$ and $\theta$ satisfies the same bound as in \eqref{Hsul eq9}. Since this holds for all $z \in \R^2$ and $t\in [0,T]$ we see that $\theta \in L^{\infty}([0,T];H_{ul}^s(\R^2))$. Similarly, utilizing the uniform bound on $\norm{\phi_zu_n(t)}{H^{s+1-\beta}}$ and applying an identical argument as above allows us to conclude that $u \in L^{\infty}([0,T];H^{s+1-\beta}_{ul}(\R^2))$.
\\
\\
\textbf{$(u,\theta)$ satisfies \eqref{Sobolev Serfati Identity}}
To see that $(u,\theta)$ satisfy the Serfati-type identity 
$$
u(t)=u^0 + \nabla^{\perp}(a\Phi_{\beta})\ast (\theta(t)-\theta^0) - \int_0^t \nabla\nabla^{\perp}((1-a)\Phi_{\beta})\ast\cdot(\theta u)(\tau)d\tau,
$$
note that by the Sobolev Embedding Theorem $\theta \in C([0,T];C^{\alpha}(\R^2))$ and\\ $u \in C([0,T];C^{\alpha+1-\beta}(\R^2))$ for some $\alpha >1$, so applying Theorem \ref{Holder Result} we see that $(u,\theta)$ satisfies \eqref{Sobolev Serfati Identity}.
\\
\\
\textbf{Uniqueness.} Our last goal is to show the uniqueness of solutions to \eqref{gSQG}. Note that if $(u_1,\theta_1)$ and $(u_2,\theta_2)$ are two solutions generated from the same initial data $(u^0,\theta^0)$, if we set $\overline{\theta}=\theta^1-\theta^2$ and $\overline{u}=u^1-u^2$, then 

\begin{equation}\begin{cases}
\label{Hsul uniqueness pde}
\partial_t\overline{\theta}+u^1\cdot\nabla\overline{\theta}+\overline{u}\cdot\nabla\theta^2=0 \\
\overline{\theta}\vert_{t=0}=0,
\end{cases}
\end{equation}
and $\overline{u}$ satisfies the following identity for each $t\in [0,T]$:
\begin{equation}
\label{Hsul serfati uniqueness}
\overline{u}(t)=\nabla^{\perp}(a\Phi_{\beta})\ast\overline{\theta}(t)-\int_0^t\nabla\nabla^{\perp}((1-a)\Phi_{\beta})\ast\cdot(\overline{\theta}u^1+\theta^2\overline{u})(\tau)d\tau.
\end{equation}
We let $X(t,x)$ be the flow map for $u^1$, i.e. $X$ satisfies the following ODE:
\begin{equation}
\label{u1 flow map}
\begin{cases}
\partial_t X(t,x) = u^1(t,X(t,x))\\
X(x,0)=x.
\end{cases}
\end{equation}
We remark that since $\nabla u^1(t) \in L^{\infty}(\R^2)$ and $u^1$ is divergence free, this flow map exists and is measure preserving. Thus rewriting \eqref{Hsul uniqueness pde} in Lagrangian coordinates, integrating in time, and taking the $L^{\infty}$ norm yields
\begin{equation}
\label{estimate 1 for Hsul uniqueness}
\norm{\overline{\theta}(t)}{L^{\infty}} \leq \int_0^t \norm{\overline{u}(\tau)}{L^{\infty}}\norm{\nabla \theta^2(\tau)}{L^{\infty}}d\tau.
\end{equation}
Then taking the $L^{\infty}$ norm of \eqref{Hsul serfati uniqueness} we see that for all $t \in [0,T]$
\begin{equation}
\label{estimate 2 for Hsul uniqueness}
\norm{\overline{u}(t)}{L^{\infty}} \lesssim \norm{\overline{\theta}(t)}{L^{\infty}}+ \int_0^t \left(\norm{u^1(\tau)}{L^{\infty}}\norm{\overline{\theta}(\tau)}{L^{\infty}} + \norm{\theta^2(\tau)}{L^{\infty}}\norm{\overline{u}(\tau)}{L^{\infty}}\right)d\tau.
\end{equation}
Substituting \eqref{estimate 1 for Hsul uniqueness} into \eqref{estimate 2 for Hsul uniqueness}, and then adding the two quantities we see that 
$$
\norm{\overline{\theta}(t)}{L^{\infty}}+\norm{\overline{u}(t)}{L^{\infty}} \lesssim \int_0^t \left( \norm{\overline{u}(\tau)}{L^{\infty}}\norm{ \theta^2(\tau)}{\tilde{C}^1} + \norm{u^1(\tau)}{L^{\infty}}\norm{\overline{\theta}(\tau)}{L^{\infty}}\right)d\tau.
$$
Utilizing the uniform bounds of $\norm{\theta^2(t)}{\tilde{C}^1}$ and $\norm{u^1(\tau)}{L^{\infty}}$, and applying Grönwall's Lemma shows uniqueness.
\end{proof}

\appendix
\section{\\Estimates in Hölder Spaces}

\begin{lemma} If $r>0$ and $f \in C^r(\R^2)$, then
$$
\norm{\nabla^{\perp}(a\Phi_{\beta})\ast f}{L^{\infty}} \lesssim \norm{f}{C^r}.
$$
\end{lemma}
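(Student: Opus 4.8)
The plan is to reduce everything to Young's convolution inequality by showing that $\nabla^\perp(a\Phi_\beta)$ is an $L^1(\R^2)$ function; the Hölder regularity of $f$ enters only through the embedding $C^r(\R^2)\hookrightarrow L^\infty(\R^2)$, which holds for any $r>0$. Concretely, since $\norm{f}{L^\infty}\leq\sum_{j\geq-1}\norm{\Delta_j f}{L^\infty}\leq\big(\sum_{j\geq-1}2^{-jr}\big)\norm{f}{C^r}\lesssim\norm{f}{C^r}$, where the geometric series converges precisely because $r>0$, it suffices to produce a finite bound on $\norm{\nabla^\perp(a\Phi_\beta)}{L^1}$ and then write $\norm{\nabla^\perp(a\Phi_\beta)\ast f}{L^\infty}\leq\norm{\nabla^\perp(a\Phi_\beta)}{L^1}\norm{f}{L^\infty}\lesssim\norm{f}{C^r}$.

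The one computational step is the claim $\nabla^\perp(a\Phi_\beta)\in L^1(\R^2)$. I would expand by the product rule, $\nabla^\perp(a\Phi_\beta)=(\nabla^\perp a)\,\Phi_\beta+a\,\nabla^\perp\Phi_\beta$, noting that this holds as an identity of locally integrable functions with no distributional contribution concentrated at the origin, because $|\nabla\Phi_\beta(x)|\sim|x|^{-\beta-1}$ with $\beta+1<2$ is already in $L^1_{loc}(\R^2)$. Both terms are supported in $B_2(0)$. The first term is bounded there — $\nabla^\perp a$ is supported in the annulus $\{1\leq|x|\leq 2\}$, on which $\Phi_\beta$ is smooth and bounded — hence lies in $L^1$ by compact support. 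For the second term,
$$
\int_{B_2(0)}|a(x)|\,|\nabla^\perp\Phi_\beta(x)|\,dx\;\lesssim_\beta\;\int_{B_2(0)}|x|^{-\beta-1}\,dx\;=\;2\pi\int_0^2\rho^{-\beta}\,d\rho\;<\;\infty,
$$
where the radial integral converges exactly because $\beta<1$. Thus $\norm{\nabla^\perp(a\Phi_\beta)}{L^1}\lesssim_\beta 1$, and combining with the previous paragraph finishes the proof.

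The main (and essentially only) point requiring attention is verifying the $L^1$ bound for the gradient near the origin, i.e. checking that the singularity $|x|^{-\beta-1}$ is integrable in $\R^2$ and that no Dirac mass appears in the distributional derivative; this is precisely where the hypothesis $\beta\in(0,1)$ is used, and it is worth flagging that the result (and the appearance of $\nabla^\perp(a\Phi_\beta)\in L^1$, already invoked in Lemma~\ref{near field Hsul estimate}) would fail for $\beta\geq 1$. Everything else is a routine application of Young's inequality and the Littlewood--Paley characterization of $C^r$.
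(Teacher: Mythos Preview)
Your proposal is correct and follows essentially the same approach as the paper: both argue via the product rule that $\nabla^\perp(a\Phi_\beta)\in L^1(\R^2)$ (using $\beta\in(0,1)$ for local integrability of $\nabla\Phi_\beta$ near the origin), then apply Young's inequality together with $\norm{f}{L^\infty}\lesssim\norm{f}{C^r}$. Your version is in fact more detailed---you explicitly compute the radial integral and address the absence of a distributional singularity at the origin---whereas the paper simply records $\Phi_\beta\in W^{1,1}(\supp a)$ and moves on.
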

\begin{proof} First, by the Leibniz rule we note
$$
\norm{\nabla^{\perp}(a\Phi_{\beta})}{L^1} \lesssim \norm{\Phi_{\beta}}{L^1(\supp{a})}\norm{\nabla a}{L^{\infty}} + \norm{\nabla \Phi_{\beta}}{L^1(\supp{a})}\norm{a}{L^{\infty}}.
$$
Since $\beta \in (0,1)$,  $\Phi_{\beta}\in W^{1,1}(\supp{a})$. Applying Young's convolution inequality we have that 
$$
\norm{\nabla^{\perp}(a\Phi_{\beta})\ast f}{L^{\infty}} \lesssim \norm{f}{L^{\infty}} \lesssim \norm{f}{C^r}.
$$
\end{proof}

\begin{lemma}
\label{Lemma A2}
Let $j \in \mathbb{Z}$. Suppose that $f \in L^p(\R^d)$ for some $p \in [1,\infty]$, then 
\begin{equation}
\label{berstein type inequality}
\norm{\dot{\Delta}_j \nabla^{\perp}(-\Delta)^{-1+\beta/2}f}{L^p} \lesssim 2^{j(-1+\beta)}\norm{\dot{\Delta}_jf}{L^p}
\end{equation}
\end{lemma}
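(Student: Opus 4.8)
The plan is to treat $\nabla^{\perp}(-\Delta)^{-1+\beta/2}$ as a Fourier multiplier operator $m(D)$, whose (vector-valued) symbol $m(\xi)$ is smooth on $\R^d\setminus\{0\}$ and homogeneous of degree $-1+\beta$, and to exploit the frequency localization of $\dot{\Delta}_j$. Since $m(D)$ and $\dot{\Delta}_j$ are both Fourier multipliers, they commute, so
\[
\dot{\Delta}_j\,\nabla^{\perp}(-\Delta)^{-1+\beta/2}f=m(D)\,\dot{\Delta}_j f ,
\]
and $\widehat{\dot{\Delta}_j f}$ is supported in the annulus $\mathcal{C}_{\frac{3}{5}2^{j},\frac{5}{3}2^{j}}(0)$. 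It therefore suffices to bound $\norm{m(D)\dot{\Delta}_j f}{L^p}$ by $2^{j(-1+\beta)}\norm{\dot{\Delta}_j f}{L^p}$.

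First I would fix $\widetilde{\varphi}\in C_0^{\infty}(\R^d)$, supported in an annulus bounded away from the origin, with $\widetilde{\varphi}\equiv 1$ on $\supp\widehat{\varphi}$, so that $\dot{\Delta}_j f=\widetilde{\varphi}(2^{-j}D)\dot{\Delta}_j f$ and hence $m(D)\dot{\Delta}_j f=\bigl(m\,\widetilde{\varphi}(2^{-j}\cdot)\bigr)(D)\dot{\Delta}_j f$. Because $\widetilde{\varphi}$ vanishes near $\xi=0$, the function $g\eqdef m\widetilde{\varphi}$ is a fixed vector-valued element of $C_0^{\infty}(\R^d)$, so $\mathcal{F}^{-1}g\in\mathcal{S}(\R^d)$. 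Using the homogeneity of $m$,
\[
m(\xi)\,\widetilde{\varphi}(2^{-j}\xi)=2^{j(-1+\beta)}\,g(2^{-j}\xi),
\]
whose inverse Fourier transform equals $2^{j(-1+\beta)}2^{jd}(\mathcal{F}^{-1}g)(2^{j}\cdot)$, an $L^1$ kernel with norm $2^{j(-1+\beta)}\norm{\mathcal{F}^{-1}g}{L^1}$ (the $2^{jd}$ is absorbed by the change of variables). Since $m(D)\dot{\Delta}_j f$ is this kernel convolved with $\dot{\Delta}_j f$, Young's convolution inequality gives
\[
\norm{\dot{\Delta}_j\,\nabla^{\perp}(-\Delta)^{-1+\beta/2}f}{L^p}\le 2^{j(-1+\beta)}\norm{\mathcal{F}^{-1}g}{L^1}\norm{\dot{\Delta}_j f}{L^p}\lesssim 2^{j(-1+\beta)}\norm{\dot{\Delta}_j f}{L^p},
\]
which is the claimed bound; this is the natural companion of the generalized Bernstein estimate in Lemma \ref{Bernstein for Riesz Potentital}, and could alternatively be deduced from it together with the frequency-localized boundedness of the Riesz transform after writing $\nabla^{\perp}(-\Delta)^{-1+\beta/2}=\mathcal R^{\perp}(-\Delta)^{(\beta-1)/2}$.

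There is no serious obstacle here; the content is the standard dyadic scaling argument behind Bernstein-type inequalities. The only points requiring a little care are: (i) that $g=m\widetilde{\varphi}$ is genuinely smooth and compactly supported, which hinges on choosing $\widetilde{\varphi}$ to vanish in a neighborhood of $\xi=0$ so that the homogeneous singularity of $m$ there is irrelevant; (ii) bookkeeping the scaling exponents, so that degree $-1+\beta$ homogeneity produces exactly the factor $2^{j(-1+\beta)}$ while the dilation factor $2^{jd}$ is absorbed into the $L^1$-normalization of the kernel; and (iii) noting that every manipulation takes place at the level of the frequency-localized distribution $\dot{\Delta}_j f$, so no ambiguity modulo polynomials arises when applying the negative-order operator $(-\Delta)^{-1+\beta/2}$.
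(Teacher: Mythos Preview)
Your argument is correct and is essentially the same dyadic scaling plus Young's inequality argument the paper gives: localize to the annulus, replace the homogeneous symbol by a fixed $C_0^\infty$ function, and read off the $2^{j(-1+\beta)}$ factor from homogeneity. The only cosmetic difference is that the paper first peels off $(-\Delta)^{(\beta-1)/2}$ via Bernstein and runs the scaling argument on the degree-$0$ Riesz symbol $\xi^{\perp}/|\xi|$, whereas you treat the full degree-$(-1+\beta)$ symbol at once (and even note this alternative factorization yourself).
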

\begin{proof} By Bernstein's lemma \eqref{berstein type inequality} is equivalent to showing 
$$
\norm{\dot{\Delta}_j\nabla^{\perp}(-\Delta)^{-1+\frac{\beta}{2}}f}{L^p} \lesssim \norm{\dot{\Delta}_j(-\Delta)^{\frac{\beta-1}{2}}f}{L^p}.
$$
We see that 
\begin{align*}
\norm{\dot{\Delta}_j\nabla^{\perp}(-\Delta)^{-1+\frac{\beta}{2}}f}{L^p} &= \norm{\mathcal{F}^{-1}\left(\varphi_j(\xi)\frac{\xi^{\perp}\abs{\xi}}{\abs{\xi}^2}\abs{\xi}^{-1+\beta}\hat{f}(\xi)\right)}{L^p}\\
&= \norm{\dot{\Delta}_j(\check{m}\ast (-\Delta)^{\frac{\beta-1}{2}}f)}{L^p},
\end{align*}
where we define
$$
m(\xi) \eqdef\frac{\xi^{\perp}\abs{\xi}}{\abs{\xi}^2}.
$$
Observe that
$$
m\vert_{\supp{\varphi_j}}(\xi) =\chi(2^{-3-j}\xi)(1-\chi(2^{-j+1}\xi))m(\xi)\eqdef h_j(\xi),
$$
where $h_j(\xi)=h(2^{-j}\xi)$, and 
$$
h(\xi) = \chi(2^{-3}\xi)(1-\chi(2\xi))m(\xi).
$$
Thus $\check{h}_j(x)=2^{2j}\check{h}(2^jx)$. By a change of variables we see that $\norm{\check{h}_j}{L^1}=\norm{\check{h}}{L^1}=C<\infty$. Then, applying Young's convolution inequality 
\begin{align*}
\norm{\dot{\Delta}_j\nabla^{\perp}(-\Delta)^{-1+\frac{\beta}{2}}f}{L^p} &= \norm{\check{h} \ast \dot{\Delta}_j (-\Delta)^{\frac{\beta-1}{2}}f}{L^p}\\
&\leq 
\norm{\check{h}}{L^1}\norm{\dot{\Delta}_j (-\Delta)^{\frac{\beta-1}{2}}f}{L^p}\\
&=C\norm{\dot{\Delta}_j (-\Delta)^{\frac{\beta-1}{2}}f}{L^p}.
\end{align*}
\end{proof}
We will make use of the following lemma when proving estimates on \eqref{gSQG} in the Hölder-Zygmund spaces.
\begin{lemma}
\label{Velocity Holder Estimate}
Let $r>1$. If for every $j\geq 0$, $f\in L^{\infty}(\R^d)$ and $g \in C^r(\R^d)$ satisfy
$$
\Delta_j f = \Delta_j\nabla^{\perp}(-\Delta)^{-1+\frac{\beta}{2}} g
$$
almost everywhere on $\R^d$, then $f \in C^{r+1-\beta}(\R^d)$ with
$$
\norm{f}{C^{r+1-\beta}} \lesssim \norm{f}{L^{\infty}}+\norm{g}{C^r}.
$$
\end{lemma}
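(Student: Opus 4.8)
The plan is to work directly with the Littlewood--Paley characterization of the Hölder--Zygmund norm from Definition \ref{zygmund definition}, namely $\norm{f}{C^{r+1-\beta}} = \sup_{j\geq -1} 2^{j(r+1-\beta)}\norm{\Delta_j f}{L^{\infty}}$, and to bound the block $\norm{\Delta_j f}{L^{\infty}}$ for each $j$. The low-frequency block $j=-1$ is harmless: since $\Delta_{-1}f = \chi \ast f$, Young's convolution inequality gives $\norm{\Delta_{-1}f}{L^{\infty}} \lesssim \norm{f}{L^{\infty}}$, and the associated weight $2^{-(r+1-\beta)}$ is at most $1$ because $r>1$ forces $r+1-\beta>0$. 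So all the content is in the high-frequency range $j \geq 0$.

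For $j \geq 0$ the inhomogeneous block $\Delta_j$ coincides with the homogeneous block $\dot{\Delta}_j$, both being convolution with $\varphi_j$. Hence the hypothesis $\Delta_j f = \Delta_j \nabla^{\perp}(-\Delta)^{-1+\beta/2}g$ may be rewritten as $\Delta_j f = \dot{\Delta}_j \nabla^{\perp}(-\Delta)^{-1+\beta/2}g$; here the right-hand side is a well-defined tempered distribution for each fixed $j$ even though the full operator $\nabla^{\perp}(-\Delta)^{-1+\beta/2}$ need not make sense on $f$ lacking decay, since $\dot{\Delta}_j$ localizes to an annulus away from the origin where the symbol is smooth and compactly supported. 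Applying Lemma \ref{Lemma A2} with $p=\infty$ then yields
$$
\norm{\Delta_j f}{L^{\infty}} \lesssim 2^{j(\beta-1)}\norm{\dot{\Delta}_j g}{L^{\infty}} = 2^{j(\beta-1)}\norm{\Delta_j g}{L^{\infty}},
$$
again using $\dot{\Delta}_j g = \Delta_j g$ for $j \geq 0$. Since $g \in C^r(\R^d)$ we have $\norm{\Delta_j g}{L^{\infty}} \leq 2^{-jr}\norm{g}{C^r}$, so $\norm{\Delta_j f}{L^{\infty}} \lesssim 2^{-j(r+1-\beta)}\norm{g}{C^r}$, i.e. $2^{j(r+1-\beta)}\norm{\Delta_j f}{L^{\infty}} \lesssim \norm{g}{C^r}$ uniformly over $j \geq 0$.

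Combining the two ranges of $j$ and taking the supremum gives $\norm{f}{C^{r+1-\beta}} \lesssim \norm{f}{L^{\infty}} + \norm{g}{C^r}$. There is no real obstacle here; the only points requiring a little care are the identification of inhomogeneous and homogeneous blocks at positive frequencies and the observation that the frequency-localized constitutive identity in the hypothesis is meaningful even without spatial decay --- which is precisely why the lemma is phrased blockwise rather than with the operator applied globally.
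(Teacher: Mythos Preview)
Your proof is correct and follows essentially the same route as the paper: split off the $j=-1$ block and bound it by $\norm{f}{L^{\infty}}$, then for $j\geq 0$ use the hypothesis together with Lemma~\ref{Lemma A2} to gain the factor $2^{j(\beta-1)}$ and close with the $C^r$ norm of $g$. Your explicit remark that $\Delta_j=\dot{\Delta}_j$ for $j\geq 0$ (so that Lemma~\ref{Lemma A2}, stated for homogeneous blocks, applies) is a useful clarification that the paper leaves implicit.
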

\begin{proof}
We have that 
\begin{align*}
\norm{f}{C^{r+1-\beta}} &= \sup_{j \geq -1} 2^{j(r+1-\beta)}\norm{\Delta_j f}{L^{\infty}}\\
&\lesssim 
\norm{\Delta_{-1}f}{L^{\infty}}+\sup_{j \geq 0} 2^{j(r+1-\beta)}\norm{\Delta_j f}{L^{\infty}}\\
&\lesssim
\norm{f}{L^{\infty}}+\sup_{j \geq 0} 2^{j(r+1-\beta)}\norm{\Delta_j \nabla^{\perp}(-\Delta)^{-1+\frac{\beta}{2}}g}{L^{\infty}}\\
&\lesssim
\norm{f}{L^{\infty}}+\sup_{j \geq 0} 2^{j(r+1-\beta)}2^{j(\beta-1)}\norm{\Delta_j g}{L^{\infty}}\\
&\leq 
\norm{f}{L^{\infty}}+\sup_{j \geq -1} 2^{jr}\norm{\Delta_j g}{L^{\infty}}\\
&=
\norm{f}{L^{\infty}}+\norm{g}{C^r}.
\end{align*}
In the second inequality, we used that for $j\geq 0$ one has $\Delta_j f = \Delta_j\nabla^{\perp}(-\Delta)^{-1+\frac{\beta}{2}} g$ almost everywhere on $\R^d$, and in the third inequality we used Lemma \ref{Lemma A2}.
\end{proof}

We have the following commutator estimate due to \cite{5}.
\begin{proposition}(Comuutator Estimate in Hölder Spaces)
\label{Holder Commutator Estimate}Let $j \geq -1$ and $r>0$, and let 
$$[u\cdot\nabla,\Delta_j]\theta=u\cdot\nabla(\Delta_j\theta)-\Delta_j(u\cdot\nabla\theta).$$
\begin{enumerate}
\item If $u,\theta \in \tilde{C}^1 \cap C^r$, then
\begin{equation*}
\norm{[u\cdot\nabla,\Delta_j]\theta}{C^r} \leq C\left(\norm{\nabla \theta}{L^{\infty}}\norm{u}{C^r} + \norm{\nabla u}{L^{\infty}}\norm{\theta}{C^r}\right).
\end{equation*}

\item If $\theta \in C^r$ and $u \in C^{r+1}$, then
\begin{equation*}
\norm{[u\cdot\nabla,\Delta_j]\theta}{C^r} \leq C\left(\norm{\theta}{L^{\infty}}\norm{u}{C^{r+1}} + \norm{\nabla u}{L^{\infty}}\norm{\theta}{C^r}\right).
\end{equation*}
\end{enumerate}
\end{proposition}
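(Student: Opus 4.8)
The plan is to establish both inequalities at once through Bony's paraproduct calculus: one isolates a single genuine commutator and reduces everything else to spectrally well-localized error terms. Recall $fg=T_fg+T_gf+R(f,g)$ with $T_fg=\sum_kS_{k-1}f\,\Delta_kg$ and $R(f,g)=\sum_k\Delta_kf\,\widetilde\Delta_kg$, where $\widetilde\Delta_k=\Delta_{k-1}+\Delta_k+\Delta_{k+1}$. Writing $u\cdot\nabla\theta=\sum_iu^i\partial_i\theta$ and applying this decomposition to each product $u^i\partial_i\theta$, and separately to each $u^i\partial_i\Delta_j\theta$ (with the differentiated factor as the second entry), subtraction gives, up to an overall sign,
$$
[u\cdot\nabla,\Delta_j]\theta=\mathcal{C}_j+\sum_i\bigl(\Delta_jT_{\partial_i\theta}u^i-T_{\partial_i\Delta_j\theta}u^i\bigr)+\sum_i\bigl(\Delta_jR(u^i,\partial_i\theta)-R(u^i,\partial_i\Delta_j\theta)\bigr),
$$
where $\mathcal{C}_j=\sum_i\bigl(\Delta_jT_{u^i}\partial_i\theta-T_{u^i}\partial_i\Delta_j\theta\bigr)$ is the paraproduct commutator. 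Since $\norm{f}{C^r}=\sup_{k\geq-1}2^{kr}\norm{\Delta_kf}{L^\infty}$, for any summand $g$ whose Fourier transform is supported in a ball of radius $\lesssim2^j$ it is enough to prove $\norm{g}{L^\infty}\lesssim2^{-jr}M$ to deduce $\norm{g}{C^r}\lesssim M$; the summands that are not so localized are finite or geometrically summable families of dyadic blocks whose $C^r$ norm is then estimated blockwise.

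The heart of the matter is $\mathcal{C}_j$. Matching Fourier supports forces $|k-j|\leq N_0$ for a fixed $N_0$, and one checks that $\mathcal{C}_j=\sum_i\sum_{|k-j|\leq N_0}[\Delta_j,S_{k-1}u^i]\,\Delta_k\partial_i\theta$. Writing $\Delta_jg=\varphi_j\ast g$ with $\varphi_j=2^{jd}\varphi(2^j\cdot)$ and $\varphi$ Schwartz, and Taylor expanding the smoothed coefficient,
$$
[\Delta_j,S_{k-1}u^i]\Delta_k\partial_i\theta(x)=\int\varphi_j(x-y)\Bigl(\int_0^1(y-x)\cdot\nabla S_{k-1}u^i\bigl(x+s(y-x)\bigr)\,ds\Bigr)\Delta_k\partial_i\theta(y)\,dy,
$$
one combines $\norm{\nabla S_{k-1}u^i}{L^\infty}\lesssim\norm{\nabla u}{L^\infty}$, $\int|\varphi_j(z)|\,|z|\,dz=C2^{-j}$, and Bernstein's Lemma $\norm{\Delta_k\partial_i\theta}{L^\infty}\lesssim2^k\norm{\Delta_k\theta}{L^\infty}\lesssim2^{k(1-r)}\norm{\theta}{C^r}$ to get $\norm{\mathcal{C}_j}{L^\infty}\lesssim2^{-jr}\norm{\nabla u}{L^\infty}\norm{\theta}{C^r}$; as $\mathcal{C}_j$ has Fourier support in a ball of radius $\lesssim2^j$ this gives $\norm{\mathcal{C}_j}{C^r}\lesssim\norm{\nabla u}{L^\infty}\norm{\theta}{C^r}$, a contribution appearing in both (1) and (2). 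The case $j=-1$ is identical, with $\chi$ replacing $\varphi$; only finitely many (bounded) frequencies $k$ occur, so no $2^{-j}$ weight is needed.

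It remains to bound the four error terms $\Delta_jT_{\partial_i\theta}u^i$, $T_{\partial_i\Delta_j\theta}u^i$, $\Delta_jR(u^i,\partial_i\theta)$, $R(u^i,\partial_i\Delta_j\theta)$. In each, the dyadic block of $u$ contributes $\norm{\Delta_\ell u^i}{L^\infty}\lesssim2^{-\ell r}\norm{u}{C^r}$ in case (1) and $\lesssim2^{-\ell(r+1)}\norm{u}{C^{r+1}}$ in case (2), while the factor carrying $\theta$ is one of $\norm{S_{\ell-1}\partial_i\theta}{L^\infty}$, $\norm{\partial_i\Delta_j\theta}{L^\infty}$, $\norm{\widetilde\Delta_\ell\partial_i\theta}{L^\infty}$. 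For (1) all of these are $\lesssim\norm{\nabla\theta}{L^\infty}$; for (2) one instead uses $\norm{S_{\ell-1}\partial_i\theta}{L^\infty}=\norm{\partial_iS_{\ell-1}\theta}{L^\infty}\lesssim2^\ell\norm{\theta}{L^\infty}$ (and analogously the other two $\theta$-factors are bounded by the relevant dyadic frequency times $\norm{\theta}{L^\infty}$), the surplus frequency being absorbed by the extra derivative available in $\norm{u}{C^{r+1}}$. Summing in $\ell$—the geometric sum in the remainder term converging precisely because $r>0$—and passing to the $C^r$ norm produces the term $\norm{\nabla\theta}{L^\infty}\norm{u}{C^r}$ in (1) and the term $\norm{\theta}{L^\infty}\norm{u}{C^{r+1}}$ in (2), completing both estimates. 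The chief obstacle is $\mathcal{C}_j$: squeezing the decisive factor $2^{-j}$ out of the first-order Taylor expansion against the kernel $\varphi_j$, and then tracking the Fourier supports carefully enough that the resulting weighted $L^\infty$ bound genuinely upgrades to a $j$-uniform $C^r$ estimate; the convergence bookkeeping of the error sums—and, in (2), the derivative trade-off between $u$ and $\theta$—is the remaining point requiring attention.
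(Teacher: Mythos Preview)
Your argument is correct and follows the standard Bony paraproduct route: isolate the genuine commutator $\mathcal{C}_j=\sum_{|k-j|\le N_0}[\Delta_j,S_{k-1}u]\cdot\Delta_k\nabla\theta$, extract the factor $2^{-j}$ from the first-order Taylor expansion against the kernel $\varphi_j$, and handle the remaining paraproduct and remainder pieces by direct frequency bookkeeping, trading a derivative from $\theta$ to $u$ via Bernstein in case~(2). The Fourier-support analysis you sketch is accurate, and the geometric summability you invoke for the remainder terms indeed uses exactly the hypothesis $r>0$.

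There is nothing to compare against in the paper itself: the proposition is stated with attribution to Wu~\cite{5} and no proof is given. Your approach is the one found in the cited reference and in standard texts (e.g.\ Bahouri--Chemin--Danchin), so it is in effect the intended proof.
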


\section{\\Hölder Space Result}

For smooth decaying solutions to \eqref{gSQG} we have the constitutive law given by $u=\nabla^{\perp}(-\Delta)^{-1+\frac{\beta}{2}}\theta$. If $\theta$ lacks spatial decay, this expression ceases to  make sense. Instead, we make use of the following Serfati-type identity. The proofs of the following two lemmas are identical to those in \cite{1} so we omit them.
\begin{lemma}
\label{Serfati Identity}
Suppose that $(u,\theta)$ are smooth solutions to \eqref{gSQG} on $[0,T]\times\R^2$ with $\supp{\theta}\subset \R^2$ compact. Then for all $t \in [0,T]$ we have that
$$
u(t)=u^0 + \nabla^{\perp}(a\Phi_{\beta})\ast (\theta(t)-\theta^0) - \int_0^t \nabla \nabla^{\perp}((1-a)\Phi_{\beta})\ast\cdot (\theta u)(s)ds.
$$
\end{lemma}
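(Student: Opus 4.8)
The plan is to follow the splitting strategy behind the classical Serfati identity, adapting it to the kernel $\Phi_\beta$. Since $\theta(t,\cdot)$ is smooth and compactly supported for each $t$, the constitutive law of \eqref{gSQG} may be written pointwise as $u(t)=\nabla^{\perp}(\Phi_{\beta}\ast\theta(t))$ with $\Phi_{\beta}(x)=C_{\beta}|x|^{-\beta}$ the fundamental solution of $(-\Delta)^{1-\beta/2}$, and no additive polynomial is present because $\theta(t)$ has compact support. First I would split the kernel using the cutoff $a$ as $\Phi_{\beta}=a\Phi_{\beta}+(1-a)\Phi_{\beta}$, so that $u(t)=\nabla^{\perp}(a\Phi_{\beta})\ast\theta(t)+\nabla^{\perp}((1-a)\Phi_{\beta})\ast\theta(t)$, and record the two integrability facts that make the right-hand side of the claimed identity meaningful: $a\Phi_{\beta}\in W^{1,1}(\R^2)$ because $\beta<1$ (so that $|x|^{-\beta}$ and $|x|^{-\beta-1}$ are both integrable near the origin in two dimensions), while $\nabla\nabla^{\perp}((1-a)\Phi_{\beta})\in L^{1}(\R^2)$ because this kernel is bounded on the annulus $1\le|x|\le 2$ and decays like $|x|^{-\beta-2}$ at infinity, which is integrable in $\R^2$ precisely when $\beta>0$.

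Next I would use the divergence-free constraint to rewrite the transport equation in conservation form, $\partial_{t}\theta=-\mathrm{div}(u\theta)$, and differentiate $u(t)$ in time, interchanging $\partial_t$ with the spatial convolutions and derivatives. This interchange and the subsequent integration by parts are legitimate because $\theta$, and hence $u\theta$, is smooth with compact support uniformly on $[0,T]$. The near-field piece contributes simply $\partial_t\big(\nabla^{\perp}(a\Phi_{\beta})\ast\theta(t)\big)$. In the far-field piece I would substitute $\partial_t\theta=-\mathrm{div}(u\theta)$ and move the divergence off $u\theta$ onto the kernel $(1-a)\Phi_{\beta}$ by integration by parts (no boundary terms, since $u\theta$ is compactly supported), so that $\nabla^{\perp}\big((1-a)\Phi_{\beta}\ast\partial_t\theta\big)$ becomes $-\nabla\nabla^{\perp}((1-a)\Phi_{\beta})\ast\cdot(u\theta)$. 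Collecting the two pieces gives the pointwise-in-time identity
\[
\partial_{t}u(t)=\partial_{t}\big(\nabla^{\perp}(a\Phi_{\beta})\ast\theta(t)\big)-\nabla\nabla^{\perp}((1-a)\Phi_{\beta})\ast\cdot(u\theta)(t).
\]

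Finally I would integrate this identity over $[0,t]$, using the fundamental theorem of calculus in $t$ (justified by the continuity in time of all the quantities involved), which yields
\[
u(t)-u^{0}=\nabla^{\perp}(a\Phi_{\beta})\ast\big(\theta(t)-\theta^{0}\big)-\int_{0}^{t}\nabla\nabla^{\perp}((1-a)\Phi_{\beta})\ast\cdot(u\theta)(s)\,ds,
\]
which is the asserted Serfati-type identity. The step I expect to require the most care is the far-field manipulation: one must check rigorously that $\nabla^{\perp}\big((1-a)\Phi_{\beta}\ast\mathrm{div}(u\theta)\big)=\nabla\nabla^{\perp}((1-a)\Phi_{\beta})\ast\cdot(u\theta)$ as an identity of convolutions of merely locally integrable kernels against a compactly supported smooth vector field, and — since the usefulness of this identity lies in its surviving the later density and approximation arguments — to keep precise track of exactly which kernel integrability ($a\Phi_{\beta}\in W^{1,1}$ for the instantaneous term and $\nabla\nabla^{\perp}((1-a)\Phi_{\beta})\in L^{1}$ for the time integral) is invoked at each step, as these are the bounds under which the identity extends to bounded, non-decaying $\theta$. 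A minor preliminary point is to confirm the fundamental-solution relation $(-\Delta)^{1-\beta/2}\Phi_{\beta}=\delta$, a standard Riesz-potential computation.
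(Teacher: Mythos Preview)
Your proposal is correct and follows exactly the standard derivation: split $\Phi_\beta=a\Phi_\beta+(1-a)\Phi_\beta$, use $\partial_t\theta=-\mathrm{div}(u\theta)$, integrate by parts in the far-field term (exploiting compact support), and integrate in time. The paper itself omits the proof of this lemma, referring to \cite{1} with the remark that the argument is identical; your writeup is precisely that argument adapted to the $\Phi_\beta$ kernel, including the correct integrability checks ($a\Phi_\beta\in W^{1,1}$ since $\beta<1$, and $\nabla\nabla^{\perp}((1-a)\Phi_\beta)\in L^1$ since $\beta>0$).
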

$$
$$
\begin{lemma}
\label{Holder Space Frequency Equality} Suppose that $(u_n,\theta_n)_n$ are as in Theorem \ref{Holder Result}, then for any $j \in \mathbb{Z}$ and any $n \in \mathbb{N}$ we have that
$$
\dot{\Delta}_j u^n = \dot{\Delta}_j \nabla^{\perp}(-\Delta)^{-1+\frac{\beta}{2}}\theta^n.
$$
\end{lemma}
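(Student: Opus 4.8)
The plan is to apply the homogeneous Littlewood--Paley projector $\dot\Delta_j$ to the Serfati-type identity satisfied by $(u_n,\theta_n)$ (the one furnished by Theorem \ref{Holder Result}) and to recognize, on the Fourier side, that the two truncated convolution kernels reassemble into the symbol of $\nabla^\perp(-\Delta)^{-1+\beta/2}$. The crucial structural point is that $\dot\Delta_j$ localizes to the dyadic annulus $\supp\varphi_j$, which is bounded away from the origin, so every symbol that appears --- in particular $|\xi|^{\beta-2}$ --- is smooth there; this is exactly what renders the non-decaying setting harmless and lets us sidestep the polynomial / low-frequency ambiguities that would otherwise obstruct making sense of $\nabla^\perp(-\Delta)^{-1+\beta/2}\theta_n$ itself.

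First I would dispose of the initial data. The hypothesis $u^0=\nabla^\perp(-\Delta)^{-1+\beta/2}\theta^0$ in $\dot C^r(\R^2)$ means precisely that $\dot\Delta_j u^0=\dot\Delta_j\nabla^\perp(-\Delta)^{-1+\beta/2}\theta^0$ for every $j\in\mathbb{Z}$. Since $u_n^0=S_{n+1}u^0$, $\theta_n^0=S_{n+1}\theta^0$, and $S_{n+1}$ commutes both with $\dot\Delta_j$ and with the Fourier multiplier $\nabla^\perp(-\Delta)^{-1+\beta/2}$ on $\supp\varphi_j$, applying $S_{n+1}$ yields $\dot\Delta_j u_n^0=\dot\Delta_j\nabla^\perp(-\Delta)^{-1+\beta/2}\theta_n^0$ for all $j$ and $n$.

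Next, the transport equation together with $\operatorname{div}u_n=0$ gives $\partial_\tau\theta_n=-\nabla\cdot(\theta_nu_n)$, hence $\theta_n(t)-\theta_n^0=-\int_0^t\nabla\cdot(\theta_nu_n)(\tau)\,d\tau$. Applying $\dot\Delta_j$ to the Serfati-type identity, moving one spatial derivative off the $L^1$ kernel $\nabla\nabla^\perp((1-a)\Phi_\beta)$ and onto $\theta_nu_n$ (legitimate since $\dot\Delta_j$ and convolution with an $L^1$ kernel both commute with $\partial$), and then interchanging $\dot\Delta_j$ with the time integral, I obtain
\[
\dot\Delta_j u_n(t)=\dot\Delta_j u_n^0+\dot\Delta_j\big[\nabla^\perp(a\Phi_\beta)\ast(\theta_n(t)-\theta_n^0)\big]+\dot\Delta_j\big[\nabla^\perp((1-a)\Phi_\beta)\ast(\theta_n(t)-\theta_n^0)\big].
\]
On the Fourier side the two convolution operators carry symbols $i\xi^\perp\widehat{a\Phi_\beta}(\xi)$ and $i\xi^\perp\widehat{(1-a)\Phi_\beta}(\xi)$, whose sum is $i\xi^\perp\widehat{\Phi_\beta}(\xi)$; since $\Phi_\beta$ is the fundamental solution of $(-\Delta)^{1-\beta/2}$ one has $\widehat{\Phi_\beta}(\xi)=c_\beta|\xi|^{\beta-2}$, so on $\supp\varphi_j$ this sum coincides with the symbol $ic_\beta\xi^\perp|\xi|^{\beta-2}$ of $\nabla^\perp(-\Delta)^{-1+\beta/2}$. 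Multiplying through by $\varphi_j$ (which vanishes near $\xi=0$, so all these symbol products act legitimately on tempered distributions) and inserting the initial-data identity collapses the right-hand side to $\dot\Delta_j\nabla^\perp(-\Delta)^{-1+\beta/2}\theta_n^0+\dot\Delta_j\nabla^\perp(-\Delta)^{-1+\beta/2}(\theta_n(t)-\theta_n^0)=\dot\Delta_j\nabla^\perp(-\Delta)^{-1+\beta/2}\theta_n(t)$, which is the claim.

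I expect the only delicate part to be the bookkeeping of these Fourier manipulations: $\theta_n$ and $u_n$ are merely bounded (non-decaying), so $\theta_nu_n$ and the various convolutions must be handled as tempered distributions, and one must check that each symbol product is well defined before multiplying by $\varphi_j$. This is exactly where the frequency localization of $\dot\Delta_j$ is essential --- on $\supp\varphi_j$ the symbols $i\xi^\perp\widehat{a\Phi_\beta}$, $i\xi^\perp\widehat{(1-a)\Phi_\beta}$ and $ic_\beta\xi^\perp|\xi|^{\beta-2}$ are all smooth (the last precisely because $\supp\varphi_j$ avoids the origin), hence are bona fide Fourier multipliers, so the reassembly above is rigorous.
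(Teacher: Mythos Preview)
Your approach is correct and is the standard one (the paper omits its own proof, deferring to \cite{1}, and what you wrote is exactly the expected argument). One point of bookkeeping to tighten: in the approximating scheme \eqref{Holder recover theta}--\eqref{Holder recover velocity} the indices are staggered---the Serfati-type identity for $u^{n+1}$ involves $\theta^{n+1}u^n$, and the transport equation for $\theta^{n+1}$ is driven by $u^n$, not $u^{n+1}$. This is harmless for your argument (in fact it is precisely what makes it work: $\partial_t\theta^{n+1}=-\nabla\cdot(\theta^{n+1}u^n)$ matches the far-field integrand exactly), but you should write the proof with those indices rather than with a generic $(u_n,\theta_n)$. Relatedly, you invoke $\operatorname{div}u^n=0$ without comment; this follows directly from the structure of \eqref{Holder recover velocity} (every term is a $\nabla^\perp$ of a scalar convolved with something, and $S_{n+2}u^0$ inherits the divergence-free condition from $u^0$), so one line suffices, but it should be said.
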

We are now set up to show well-posedness of \eqref{gSQG} in Hölder spaces.
\begin{theorem}
\label{Holder Result}
For $r \in (1,\infty)$ let $\theta^0 \in C^r(\R^2)$ and $u^0 \in L^{\infty}(\R^2)$ be such that
$$
u^0 = \nabla^{\perp}(-\Delta)^{-1+\frac{\beta}{2}} \theta^0 \text{ in } \dot{C}^r.
$$
Then there exists a time $T>0$ and a unique solution to 
\begin{equation}
\label{Holder gSQG}
\begin{cases}
\partial_t \theta + u\cdot \nabla \theta=0\\
(u,\theta)\vert_{t=0}=(u^0,\theta^0),
\end{cases}
\end{equation}
satisfying for any $r'<r$
\begin{align*}
\theta &\in L^{\infty}([0,T]; C^r(\R^2)) \cap \text{Lip}([0,T]; C^{r-1}(\R^2)) \cap C([0,T]; C^{r'}(\R^2)),\\
u &\in L^{\infty}([0,T]; C^{r+1-\beta}(\R^2)) \cap C([0,T]; C^{r'+1-\beta}(\R^2)).
\end{align*}
Furthermore, there exists some constant $C>0$ such that $(u,\theta)$ satisfies
\begin{equation}
\label{bound on holder norms of solution}
\norm{u}{L^{\infty}([0,T];L^{\infty})}+\norm{\theta}{L^{\infty}([0,T];C^r)} \leq \frac{C(\norm{u^0}{L^{\infty}}+\norm{\theta^0}{C^r})}{1-CT(\norm{u^0}{L^{\infty}}+\norm{\theta^0}{C^r})},
\end{equation}
and $(u,\theta)$ satisfies the Serfati-type identity, as in Lemma \ref{Serfati Identity}:
$$
u(t)=u_0 + \nabla^{\perp}(a\Phi_{\beta})\ast (\theta(t)-\theta_0) - \int_0^t \nabla \nabla^{\perp}((1-a)\Phi_{\beta})\ast\cdot (\theta u)(s)ds,
$$
for each $t \in [0,T]$.
\end{theorem}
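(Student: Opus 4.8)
The plan is to obtain $(u,\theta)$ as a local limit of smooth, rapidly decaying classical solutions of \eqref{gSQG}, with a single short time $T$ on which the relevant norms are bounded independently of the approximation parameter; uniqueness will then come from an $L^\infty$ energy estimate driven by the Serfati identity. First I would regularize the data: set $\theta_n^0$ to be a frequency truncation of a spatial cutoff of $\theta^0$ (e.g. $\theta_n^0=S_{n+1}(a(\cdot/n)\theta^0)$) so that $\theta_n^0\in C_c^\infty(\R^2)$, $\theta_n^0\to\theta^0$ locally uniformly, and $\sup_n\|\theta_n^0\|_{C^r}\lesssim\|\theta^0\|_{C^r}$; correspondingly put $u_n^0=\nabla^\perp(-\Delta)^{-1+\beta/2}\theta_n^0$, which is well defined because $\theta_n^0$ has compact support, and, using Lemma \ref{Velocity Holder Estimate} applied to the pair $(u^0,\theta^0)$ together with the hypothesis that the two agree in $\dot C^r$, satisfies $\sup_n\|u_n^0\|_{L^\infty}\lesssim\|u^0\|_{L^\infty}+\|\theta^0\|_{C^r}$. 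For each $n$, standard local well-posedness for \eqref{gSQG} with smooth compactly supported data (for $\beta\in(0,1)$ the velocity is one derivative smoother than in SQG, so the classical energy method in $H^m$ with $m$ large applies) produces a smooth solution $(u_n,\theta_n)$ on a maximal interval; by Lemma \ref{Serfati Identity} it obeys the Serfati-type identity and by Lemma \ref{Holder Space Frequency Equality} the relation $\dot\Delta_j u_n=\dot\Delta_j\nabla^\perp(-\Delta)^{-1+\beta/2}\theta_n$ for all $j\in\mathbb{Z}$.

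Next I would run the a priori estimate. Applying $\Delta_j$ to the transport equation, estimating along the flow of $u_n$, multiplying by $2^{jr}$ and taking a supremum over $j$, and invoking the commutator bound in Proposition \ref{Holder Commutator Estimate}(1), gives
$$\|\theta_n(t)\|_{C^r}\le\|\theta_n^0\|_{C^r}+C\int_0^t\big(\|\nabla\theta_n\|_{L^\infty}\|u_n\|_{C^r}+\|\nabla u_n\|_{L^\infty}\|\theta_n\|_{C^r}\big)\,d\tau.$$
Since $\beta<1<r$, Lemma \ref{Velocity Holder Estimate} gives $\|u_n\|_{C^r}+\|\nabla u_n\|_{L^\infty}\lesssim\|u_n\|_{C^{r+1-\beta}}\lesssim\|u_n\|_{L^\infty}+\|\theta_n\|_{C^r}$ and $\|\nabla\theta_n\|_{L^\infty}\lesssim\|\theta_n\|_{C^r}$, so with $E_n(t):=\|u_n(t)\|_{L^\infty}+\|\theta_n(t)\|_{C^r}$ one gets $\|\theta_n(t)\|_{C^r}\lesssim E_n(0)+\int_0^t E_n(\tau)^2\,d\tau$. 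To control $\|u_n\|_{L^\infty}$ I use the Serfati identity rather than the (non-decaying) constitutive law: the kernels $\nabla^\perp(a\Phi_\beta)$ and $\nabla\nabla^\perp((1-a)\Phi_\beta)$ are integrable (the first because $\Phi_\beta\in W^{1,1}(\supp a)$ for $\beta<1$, the second because $(1-a)\Phi_\beta$ is smooth with second derivatives $\sim|x|^{-\beta-2}$ at infinity), so Young's inequality yields $\|u_n(t)\|_{L^\infty}\lesssim E_n(0)+\|\theta_n(t)\|_{C^r}+\int_0^t\|\theta_n u_n\|_{L^\infty}\,d\tau\lesssim E_n(0)+\|\theta_n(t)\|_{C^r}+\int_0^t E_n(\tau)^2\,d\tau$. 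Adding the two estimates gives $E_n(t)\le C E_n(0)+C\int_0^t E_n(\tau)^2\,d\tau$ with $C$ independent of $n$; the resulting Riccati inequality produces a time $T\sim(C\,E(0))^{-1}$, independent of $n$ since $E_n(0)\lesssim E(0)$, on which $E_n(t)\le\frac{C E(0)}{1-CT E(0)}$, which is precisely \eqref{bound on holder norms of solution}.

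Then I would pass to the limit. From the equation $\partial_t\theta_n=-u_n\cdot\nabla\theta_n$ is bounded in $C^{r-1}$ (a product of $u_n\in C^{r+1-\beta}$ with $\nabla\theta_n\in C^{r-1}$, and $r-1>0$), so $(\theta_n)$ is bounded in $L^\infty_tC^r\cap\mathrm{Lip}_tC^{r-1}$; Arzel\`a--Ascoli on balls plus a diagonal argument extracts a subsequence with $\theta_n\to\theta$ in $C([0,T];C^{r'}_{loc})$ for every $r'<r$, and via Lemma \ref{Velocity Holder Estimate} also $u_n\to u$ locally in $C^{r'+1-\beta}$. Since the kernels above are $L^1$ and $(\theta_n),(u_n)$ are uniformly bounded in $L^\infty$, dominated convergence passes the limit through the Serfati identity and through the transport equation; passing the limit through $\dot\Delta_j u_n=\dot\Delta_j\nabla^\perp(-\Delta)^{-1+\beta/2}\theta_n$ (each $\dot\Delta_j$ and $\dot\Delta_j\nabla^\perp(-\Delta)^{-1+\beta/2}$ being convolution with a Schwartz function) gives $u=\nabla^\perp(-\Delta)^{-1+\beta/2}\theta$ in $\dot C^r$. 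Lower semicontinuity of $\|\cdot\|_{C^r}$ under this convergence gives $\theta\in L^\infty_tC^r$ and $u\in L^\infty_tC^{r+1-\beta}$; $\partial_t\theta=-u\cdot\nabla\theta\in L^\infty_tC^{r-1}$ gives $\theta\in\mathrm{Lip}_tC^{r-1}$; and interpolating the uniform $C^r$ (resp. $C^{r+1-\beta}$) bound against time continuity in $L^\infty$ (the latter read off from the Serfati identity, the kernels being $L^1$) yields $\theta\in C_tC^{r'}$ and $u\in C_tC^{r'+1-\beta}$ for all $r'<r$.

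Finally, uniqueness: for two solutions with the same data set $\overline\theta=\theta^1-\theta^2$ and $\overline u=u^1-u^2$, so that $\partial_t\overline\theta+u^1\cdot\nabla\overline\theta+\overline u\cdot\nabla\theta^2=0$ with $\overline\theta(0)=0$, and subtracting Serfati identities gives $\overline u(t)=\nabla^\perp(a\Phi_\beta)\ast\overline\theta(t)-\int_0^t\nabla\nabla^\perp((1-a)\Phi_\beta)\ast\cdot(\overline\theta u^1+\theta^2\overline u)\,d\tau$; estimating $\overline\theta$ along the flow of $u^1$ and $\overline u$ through the $L^1$ kernels, adding, and applying Gr\"onwall's Lemma forces $\overline\theta=\overline u=0$ --- exactly as in the uniqueness step of Theorem \ref{Hsul Result}. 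The step I expect to be the main obstacle is the closure of the a priori estimate without the constitutive law: the $L^\infty$ bound on $u_n$ must be extracted from the Serfati identity, which couples the $C^r$-estimate for $\theta_n$ and the $L^\infty$-estimate for $u_n$ into a single Riccati inequality, and this has to be done while simultaneously arranging that the approximate solutions are genuine classical solutions (so Lemmas \ref{Serfati Identity} and \ref{Holder Space Frequency Equality} are available) yet still converge --- only locally --- back to the non-decaying datum. Once those are in place the remaining transport and commutator estimates are routine.
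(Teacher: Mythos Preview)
Your overall strategy is sound and your a priori estimate, compactness argument, and uniqueness are essentially correct, but the approach differs from the paper's in one structural way that creates a genuine gap.

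The paper does \emph{not} approximate by classical compactly supported gSQG solutions. Instead it runs a linear iteration: $\theta^{n+1}$ solves the transport equation with velocity $u^n$, and $u^{n+1}$ is \emph{defined} by the Serfati formula (so the constitutive law is never invoked). The initial data are simply $\theta^{n+1}(0)=S_{n+2}\theta^0$ and $u^{n+1}(0)=S_{n+2}u^0$, which are automatically uniformly bounded in $C^r$ and $L^\infty$ respectively. The sequence is then shown to be Cauchy in $C([0,T];C^{r-1})\times C([0,T];L^\infty)$ by summing the differences $D_n=\|\theta^n-\theta^{n-1}\|_{C^{r-1}}+\|u^n-u^{n-1}\|_{L^\infty}$.

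Your scheme instead sets $u_n^0=\nabla^\perp(-\Delta)^{-1+\beta/2}\theta_n^0$ for a spatially truncated $\theta_n^0$, and asserts $\sup_n\|u_n^0\|_{L^\infty}\lesssim\|u^0\|_{L^\infty}+\|\theta^0\|_{C^r}$ via Lemma~\ref{Velocity Holder Estimate}. That lemma, however, has $\|f\|_{L^\infty}$ on the \emph{right} side, so it does not yield an $L^\infty$ bound. And the bound is in fact false in general: for $\beta\in(0,1)$ the kernel $\nabla^\perp\Phi_\beta\sim|x|^{-1-\beta}$ is not in $L^1(\R^2)$ at infinity, so for a generic $\theta^0\in L^\infty$ the $L^\infty$ norm of $\nabla^\perp\Phi_\beta\ast(a(\cdot/n)\theta^0)$ grows like $n^{1-\beta}$ with the cutoff radius. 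The hypothesis $u^0=\nabla^\perp(-\Delta)^{-1+\beta/2}\theta^0$ in $\dot C^r$ only fixes the Littlewood--Paley blocks of $u^0$ for $j\ge0$; it does not pin down a renormalization making $u_n^0-u^0$ uniformly bounded, because the discrepancy lives precisely in the low frequencies where the $2^{j(\beta-1)}$ factor from Lemma~\ref{Lemma A2} diverges. Without $\sup_n\|u_n^0\|_{L^\infty}<\infty$ your Riccati inequality gives a time $T_n\to0$ and the whole construction collapses.

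You anticipated this tension in your last paragraph, and it is exactly the point the paper's iteration is designed to avoid: by \emph{defining} $u^{n+1}$ through the Serfati identity rather than the constitutive law, one can take $u^{n+1}(0)=S_{n+2}u^0$ directly, never needs compactly supported data, and never needs the approximate velocities to be genuine Biot--Savart velocities. Lemma~\ref{Holder Space Frequency Equality} is then proved inductively along the iteration. If you want to salvage your compactly supported approach you would have to produce a renormalized velocity $\tilde u_n^0$ that (i) differs from $\nabla^\perp(-\Delta)^{-1+\beta/2}\theta_n^0$ only by a constant (so the Serfati identity and the frequency relation survive) and (ii) stays uniformly bounded; this is not automatic and would require an additional argument.
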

 \begin{proof}
 \textbf{Creating an Approximating Sequence}: We follow \cite{1} exactly. We begin by defining sequences $\{\theta^n\}_{n=1}^{\infty}$ and $\{u^n\}_{n=1}^{\infty}$ as follows: we initialize them by setting
 \begin{equation}
 \label{initial holder sequence}
 \theta^1(t,x) = S_2\theta^0(x) \text{ and } u^1(t,x)=S_2u^0(x).
 \end{equation}
Then, for $n \geq 1$ we obtain $\theta^{n+1}$ from $u^n$ by solving
\begin{equation}
\label{Holder recover theta}
\begin{cases}
\partial_t \theta^{n+1} + u^n \cdot \nabla \theta^{n+1}=0\\
\theta^{n+1}(x,0)=S_{n+2}\theta^0.\\
\end{cases}
\end{equation}
We recover the velocity $u^{n+1}$ from $\theta^{n+1}$ 
by solving the following integral equation
\begin{align}
\label{Holder recover velocity}
u^{n+1}(t)&=S_{n+2}u^0 + \nabla^{\perp}(a\Phi_{\beta}) \ast (\theta^{n+1}(t)-\theta^{n+1}(0))\\
&- \int_0^t \nabla \nabla^{\perp} ((1-a)\Phi_{\beta}) \ast \cdot (\theta^{n+1}u^n)(\tau)d\tau \nonumber.
\end{align}
\\
\textbf{Uniform Bounds:}
Observe that by standard techniques \eqref{Holder recover theta}  has a unique solution in $B_R \subset L^{\infty}([0,T_n]; C^r(\R^2))$ where $R=2C(\norm{\theta^0}{C^r}+\norm{u^0}{L^{\infty}})$. Our goal is to establish uniform bounds on $\norm{u_n(t)}{L^{\infty}}$ and $\norm{\theta_n(t)}{C^r}$. Note by Proposition \ref{Velocity Holder Estimate} such bounds will imply a uniform bound on $\norm{u_n(t)}{C^{r+1-\beta}}$.
We aim to find a time $T>0$ and constant $M>0$ such that 
\begin{equation}
\label{Induction for Holder Spaces}
\norm{u^n(t)}{L^{\infty}} + \norm{\theta^n(t)}{C^r} \leq M \text{ for all } t\in [0,T].
\end{equation}
We will do this by using induction.
For the base case $n=1$ we have, by \eqref{initial holder sequence}
$$
\norm{u^1(t)}{L^{\infty}} + \norm{\theta^1(t)}{C^r} \leq C\left(\norm{u^0}{L^{\infty}}+\norm{\theta^0}{C^r}\right).
$$
Set $M=2C(\norm{\theta^0}{C^r} + \norm{u^0}{L^{\infty}})$ and choose a time $T>0$ such that 
$
\exp(CTM)\leq 2.
$ One can explicitly solve for $T$ and see that
\begin{equation}
\label{holder time}
T \leq  \frac{C}{\norm{u^0}{L^{\infty}}+\norm{\theta^0}{C^r}}.
\end{equation}
Hence,
$$
\norm{\theta^{1}(t)}{C^r} + \norm{u^{1}(t)}{L^{\infty}} \leq C(\norm{\theta^0}{C^r} + \norm{u^0}{L^{\infty}})\exp{(C_rTM)}\leq M
$$
which establishes the base case. We next assume that \eqref{Induction for Holder Spaces} is true for $n=1,2,\hdots,k$ for some fixed positive integer $k$. We then want to show that \eqref{Induction for Holder Spaces} is true for $n=k+1$. We see that 
\begin{equation}
\label{theta k+1 inductive holder bound}
\norm{\theta^{k+1}(t)}{C^r} \leq C\norm{\theta^0}{C^r}+C\int_0^t \norm{\theta^{k+1}(\tau)}{C^r}\norm{u^k(\tau)}{C^r}d\tau.
\end{equation}
Then, estimating the velocity we obtain
\begin{equation}
\label{velocity k+1 inductive holder bound}
\norm{u^{k+1}(t)}{L^{\infty}}\leq C(\norm{u^0}{L^{\infty}}+\norm{\theta^0}{C^r})+\int_0^t \norm{\theta^{k+1}(\tau)}{L^{\infty}}\norm{u^k(\tau)}{L^{\infty}}d\tau.
\end{equation}
Adding \eqref{theta k+1 inductive holder bound} to \eqref{velocity k+1 inductive holder bound}, utilizing the inductive hypothesis, and applying Grönwall's Lemma gives
\begin{equation}
\label{Holder uniform bounds}
\norm{\theta^{k+1}(t)}{C^r}+\norm{u^{k+1}(t)}{L^{\infty}} \leq C(\norm{\theta^0}{L^{\infty}}+\norm{u^0}{L^{\infty}})\exp{(CMT)}\leq M.
\end{equation}
Thus $\{u^n\}_{n=1}^{\infty}$ and $\{\theta^n\}_{n=1}^{\infty}$ are uniformly bounded in $L^{\infty}([0,T];L^{\infty}(\R^2))$ and in $L^{\infty}([0,T]; C^r(\R^2))$, respectively. 
Then for each $n \in \mathbb{N}$ we have
\begin{align*}
\norm{\partial_t\theta^n(t)}{C^{r-1}} &= \norm{u^{n-1}\cdot\nabla\theta ^n}{C^{r-1}}\\
&\leq \norm{u^{n-1}}{C^{r-1}}\norm{\nabla \theta^{n}}{L^{\infty}} + \norm{u^{n-1}}{L^{\infty}}\norm{\theta^{n}}{C^r}\\
&\leq C \norm{\theta^n}{C^r}\norm{u^{n-1}}{C^{r-1}} \leq CM^2.
\end{align*}
Therefore $\partial_t \theta^n \in L^{\infty}([0,T], C^{r-1}(\R^2))$ which by the Mean-Value Theorem implies that $\theta^n \in Lip([0,T]; C^{r-1}(\R^2))$. Our next goal is to show that $\{\theta^n\}$ is Cauchy in $C([0,T];C^{r-1}(\R^2))$ and that $\{u^n\}$ is Cauchy in $C([0,T];L^{\infty}(\R^2))$. We define the quantities
$$
\eta^n=\theta^n-\theta^{n-1} \text{ and } v^n=u^n-u^{n-1}.
$$
Then by our definition of the approximation sequence we see that 
\begin{align*} \eta^1 &= S_2\theta^0 - \theta^0\\
v^1 &= S_2u^0-u^0,
\end{align*}
and for any $n \geq 1$ we have that 
\begin{equation}
\label{cauchy pde in Holder space}
\begin{cases}
\partial_t \eta^{n+1}+u^n\cdot \nabla \eta^{n+1} = -v^n\cdot \nabla \theta^n \hspace{.5cm} \text{in } [0,T] \times \R^2\\
\eta^{n+1}(x,0) = \Delta_{n+2}\theta^0(x) \hspace{.5cm} \text{in } \R^2\\
v^{n+1}(x,0)=\Delta_{n+2}u^0(x) \hspace{.5cm} \text{in } \R^2.
\end{cases}
\end{equation}
Note that $v^n$ satisfies the identity
\begin{align}
\label{cauchy serfati identity in Holder space}
v^n(t) - v^n(0) &= \nabla^{\perp}(a\Phi_{\beta})\ast (\eta^n(t) -\eta^n(0))\nonumber \\ &- \int_0^t \nabla\nabla^{\perp}((1-a)\Phi_{\beta}) \ast\cdot(\eta^nu^{n-1}+\theta^{n-1}v^{n-1})(\tau)d\tau.
\end{align}
Integrating \eqref{cauchy pde in Holder space}$_1$ in time, switching to Lagrangian coordinates,  applying the Littlewood-Paley operator $\Delta_j$ for $j \geq -1$, introducing a commutator, and then taking the $L^{\infty}$ norm yields the following estimate
\begin{align*}
\norm{\Delta_j\eta^{n+1}(t)}{L^{\infty}} &\leq \norm{\Delta_j\eta^{n+1}(0)}{L^{\infty}} +\int_0^t \norm{[u^n\cdot\nabla,\Delta_j]\eta^{n+1}(\tau)}{L^{\infty}}d\tau\\
&+
\int_0^t \norm{\Delta_j(v^n\cdot\nabla\theta^n)(\tau)}{L^{\infty}}d\tau.
\end{align*}
Multiplying everything by $2^{j(r-1)}$, taking the supremum over $j \geq -1$, and applying Lemma \ref{Holder Commutator Estimate} to the first integral, and noting that for any $t\in [0,T]$
$$
\norm{\nabla u^n(t)}{L^{\infty}}, \norm{u^n(t)}{C^r}, \norm{\theta^n(t)}{C^r} \leq M 
$$
yields the following estimate:
\begin{align}
\label{Holder cauchy estimate 1}
\norm{\eta^{n+1}(t)}{C^{r-1}} &\leq \norm{\eta^{n+1}(0)}{C^{r-1}} \nonumber \\&+ M\int_0^t \left(\norm{\eta^{n+1}(\tau)}{C^{r-1}} +\norm{v^n(\tau)}{C^{r-1}}\right)d\tau.
\end{align}
Next, taking the $L^{\infty}$ norm of \eqref{cauchy serfati identity in Holder space} and using the uniform bounds on $\norm{u^n(t)}{L^{\infty}} \text{ and } \norm{\theta^n(t)}{L^{\infty}}$ gives
\begin{align}
\label{Holder cauchy estimate 2}
\norm{v^{n+1}(t)}{L^{\infty}} &\lesssim \norm{v^{n+1}(0)}{L^{\infty}}+\norm{\eta^{n+1}(0)}{L^{\infty}}+\norm{\eta^{n+1}(t)}{L^{\infty}} \nonumber\\&+ M \int_0^t \left(\norm{\eta^{n+1}(\tau)}{L^{\infty}}+\norm{v^n(\tau)}{L^{\infty}}\right)d\tau.
\end{align}
Then adding \eqref{Holder cauchy estimate 1} to \eqref{Holder cauchy estimate 2} allows us to conclude 
\begin{align}
&\norm{\eta^{n+1}(t)}{C^{r-1}} + \norm{v^{n+1}(t)}{L^{\infty}}
\lesssim \norm{\eta^{n+1}(0)}{C^{r-1}} + \norm{v^{n+1}(0)}{L^{\infty}}\nonumber \\
&\hspace{1cm}+ M \int_0^t \left( \norm{\eta^{n+1}(\tau)}{C^{r-1}} + \norm{\eta^{n+1}(\tau)}{L^{\infty}} + \norm{v^{n}(\tau)}{C^{r-1}} + \norm{v^n(\tau)}{L^{\infty}} \right)d\tau.
\end{align}
By Lemma \ref{Holder Space Frequency Equality} for all $j \in \mathbb{Z}$ 
$$
\dot{\Delta}_j v^n = \dot{\Delta}_j \nabla^{\perp}(-\Delta)^{-1+\frac{\beta}{2}}\eta^{n},
$$
which in turn implies that 
Note $\norm{v^{n}(t)}{C^{r-1}}\lesssim\norm{v^n(t)}{C^{r-\beta}} \leq \norm{v^n(t)}{L^{\infty}}+ \norm{\eta^n(t)}{C^{r-1}}$. We then bound 
\begin{multline}
\int_0^t \left( \norm{\eta^{n+1}(\tau)}{C^{r-1}} + \norm{\eta^{n+1}(\tau)}{L^{\infty}} + \norm{v^{n}(\tau)}{C^{r-1}} + \norm{v^n(\tau)}{L^{\infty}} \right)d\tau \\
\lesssim 
\int_0^t \left( \norm{\eta^{n+1}(\tau)}{C^{r-1}} + \norm{v^{n+1}(\tau)}{L^{\infty}} \norm{v^n(\tau)}{L^{\infty}} + \norm{\eta^n(\tau)}{C^{r-1}}\right)d\tau.
\end{multline}
We remark that the term $\norm{v^{n+1}(\tau)}{L^{\infty}}$ was artificially introduced for the proceeding argument. We set $D_n(t) = \norm{v^n(t)}{L^{\infty}}+\norm{\eta^n(t)}{C^{r-1}}$. 
Then it follows that
\begin{equation}
\label{pre sum estimate Holder}
D_{n+1}(t) \lesssim_r D_{n+1}(0) + M\int_0^t (D_{n+1}(\tau)+D_n(\tau))d\tau.
\end{equation}
We define the function $E(t)$ as follows:
$$
E(t) \eqdef  \sum_{n=0}^{\infty} D_{n+1}(t).
$$
We claim that $E(0)$ is finite. This is since
\begin{align*}
E(0)&=\sum_{n=1}^{\infty}\left( \norm{\Delta_{n+2}u^0}{L^{\infty}} + \norm{\Delta_{n+2}\theta^0}{C^{r-1}}\right) \\
&\lesssim \sum_{n=1}^{\infty}\left(2^{-rn}\norm{u^0}{C^r} + 2^{-n}\norm{\theta^0}{C^r}\right)\\
&\lesssim \norm{u^0}{C^r}+\norm{\theta^0}{C^r}.
\end{align*}
Then summing \eqref{pre sum estimate Holder} and noting that
$$
\sum_{n=0}^{\infty} \left(D_{n+1}(t)+D_n(t)\right) = 2E(t) + D_0(t),
$$
then gives 
$$
E(t) \lesssim_r E(0)+MT + \int_0^t ME(\tau)d\tau,
$$
Applying Grönwall's inequality we find that 
$$
E(t) \lesssim_r (E(0)+MT)\exp(CMT) <\infty \text{ for all } t\in [0,T],
$$
thus $\lim_{n \to \infty} D_n(t) \to 0$ for any fixed $t \in [0,T]$. So we have shown  $$\lim_{n \to \infty} \norm{u^n(t)-u^{n-1}(t)}{L^{\infty}}=0 \text{ and } \lim_{n \to \infty}\norm{\theta^{n}(t)-\theta^{n-1}(t)}{C^{r-1}}=0.$$ 
Thus $\{u^n(t)\}$ and $\{\theta^n(t)\}$ are Cauchy in $L^{\infty}(\R^2)$ and $C^{r-1}(\R^2)$ respectively for any fixed $t \in [0,T]$. Then, following an identical argument to that in \cite{1}, we find that  $\theta^n$ is Cauchy in $C([0,T];C^{r-1}(\R^2))$ and $u^n$ is Cauchy in $C([0,T];L^{\infty}(\R^2))$.
It follows that $\{u^n\}$ converges to some $u$ in $C([0,T];L^{\infty}(\R^2))$ and $\{\theta^n\}$ converges to some $\theta$ in $C([0,T];C^{r-1}(\R^2))$. Since $\{u^n\}$ converges to $u$ we have that for every $j \in \mathbb{Z}$ 
$$
\norm{\dot{\Delta}_j(u^n-u)}{L^{\infty}} \leq \norm{u_n-u}{L^{\infty}}  \to 0 \text{ as } n \to \infty.
$$
Since $\theta^n$ is Cauchy is $C([0,T];C^{r-1}(\R^2))$ by Lemma \ref{Lemma A2}
$$
\norm{\dot{\Delta}_j\nabla^{\perp}(-\Delta)^{-1+\frac{\beta}{2}}(\theta^n-\theta)}{L^{\infty}}\lesssim 2^{j(\beta-1)}\norm{\theta^n-\theta}{L^{\infty}} \to 0 \text{ as } n \to \infty.
$$
Thus, we are able to pass to the limit in Lemma \ref{Holder Space Frequency Equality} to conclude that 
\begin{equation}
\label{frequency equality in Holder space}
\dot{\Delta}_j u = \dot{\Delta}_j \nabla^{\perp} (-\Delta)^{-1+\frac{\beta}{2}}\theta. 
\end{equation}
Furthermore, one has
$$
\theta \in L^{\infty}([0,T]; C^r(\R^2)) \cap Lip([0,T]; C^{r-1}(\R^2)).
$$
We see that $\theta \in L^{\infty}([0,T]; C^r(\R^2))$ since 
$$
\sup_{j \geq -1} 2^{jr}\norm{\Delta_j \theta}{L^{\infty}} \leq \lim_{n \to \infty} \sup_{j \geq -1} 2^{jr}\norm{\Delta_j \theta^n}{L^{\infty}}<\infty.
$$
Then applying Lemma \ref{Velocity Holder Estimate} to \eqref{frequency equality in Holder space} we conclude that
$$
u \in L^{\infty}([0,T]; C^{r+1-\beta}(\R^2)).
$$
Utilizing the uniform bounds \eqref{Holder uniform bounds}, interpolation between $C^{r-1}(\R^2)$ and $C^r(\R^2)$ shows that $\theta^n \to \theta$ in $C([0,T];C^{r'}(\R^2))$ for $r' \in [r-1,r)$. Similarly, interpolation between $L^{\infty}(\R^2)$ and $C^{r+1-\beta}(\R^2)$ shows that $u^n \to u$ in $C([0,T];C^{\alpha}(\R^2))$ for $\alpha \in (0,r+1-\beta)$. Thus for any $\alpha \in (0,r+1-\beta)$ and $ r' \in [r-1,r)$ we have shown that 
\begin{align*}
    \theta &\in L^{\infty}([0,T]; C^r(\R^2) \cap Lip([0,T]; C^{r-1}(\R^2)) \cap C([0,T];C^{r'}), \\
    u & \in L^{\infty}([0,T]; C^{r+1-\beta}(\R^2)) \cap C([0,T]; C^{\alpha}),
\end{align*}
which is enough regularity to conclude that the pair $(u,\theta)$ satisfies \eqref{Holder gSQG}. We next show that \eqref{bound on holder norms of solution} is true. Again following in the footsteps of \cite{1}, we set $\psi_n(\tau)=\norm{u^n(\tau)}{L^{\infty}}+\norm{\theta^n(\tau)}{C^r}$. From \eqref{Holder cauchy estimate 1} and \eqref{Holder cauchy estimate 2} we see that
\begin{equation}
\label{psi n estimate}
\psi_n(\tau) \lesssim \psi_n(0)\exp{\left(C_r\int_0^{\tau}\psi_n(s)ds\right)},
\end{equation}
which by the Fundamental Theorem of Calculus implies
$$
\frac{-1}{C_r}\frac{d}{d\tau}\left(e^{-C_r\int_0^{\tau}\psi_n(s)ds}\right) \lesssim \psi_n(0).
$$
Then integrating the above inequality and rearranging the terms we find that 
$$
\exp{\left(C_r\int_0^{\tau} \psi_n(s)ds \right)} \lesssim \frac{1}{1-C_rt\psi_n(0)}.
$$
From \eqref{psi n estimate} we see that $\frac{\psi_n(\tau)}{\psi_n(0)}\lesssim \exp{\left(C_r\int_0^{\tau} \psi_n(s)ds \right)}$. Thus 
\begin{equation}
\label{holder bound n}
\norm{u^n(t)}{L^{\infty}}+\norm{\theta^n(t)}{C^r} \leq \frac{C(\norm{u^0}{L^{\infty}}+\norm{\theta^0}{C^r})}{1-Ct(\norm{u^0}{L^{\infty}}+\norm{\theta^0}{C^r})}.
\end{equation}
Since \eqref{holder bound n} holds for all $n \in \mathbb{N}$ and fixed $t \in [0,T]$, we take the limit as $n \to \infty$ of the left hand side and use continuity of the norm to obtain \eqref{bound on holder norms of solution}. We now show that Lemma \ref{Serfati Identity} holds. Recall we constructed our approximation sequence as
\begin{align}
\label{Holder recover velocity}
u^{n}(t)&=u^{n}(0) + \nabla^{\perp}(a\Phi_{\beta}) \ast (\theta^{n}(t)-\theta^{n}(0))\\
&- \int_0^t \nabla \nabla^{\perp} ((1-a)\Phi_{\beta}) \ast \cdot (\theta^{n}u^{n-1})(\tau)d\tau\nonumber .
\end{align}
As $n\to \infty$ we have seen $u^n(t) \to u(t)$ in $L^{\infty}$. It remains to show that the right hand side of \eqref{Holder recover velocity} converges to the Serfati-type identity with $(u^n,\theta^n)$ replaced by $(u,\theta)$. We take the difference of the two identities, take the $L^{\infty}$ norm, and apply Young's convolution inequality and Hölder's inequality to bound the difference by
\begin{align*}
&\norm{(u^n-u)(0)}{L^{\infty}} + \norm{\nabla^{\perp}(a\Phi_{\beta})}{L^1}(\norm{(\theta^n-\theta)(t)}{L^{\infty}} + \norm{(\theta^n-\theta)(0)}{L^{\infty}}) \\
&+\norm{\nabla\nabla^{\perp}((1-a)\Phi_{\beta})}{L^1}\int_0^t \left( \norm{u^n(\tau)}{L^{\infty}}\norm{(\theta^n-\theta)(\tau)}{L^{\infty}} + \norm{\theta(\tau)}{L^{\infty}}\norm{(u^{n-1}-u)(\tau)}{L^{\infty}}\right)d\tau.
\end{align*}
Since we have defined $u^n(0)=S_{n+1}u^0$ it is clear that $\norm{(u^n-u)(0)}{L^{\infty}} \to 0$ as $n \to \infty$. Also since $a \Phi_{\beta} \in W^{1,1}(\R^2)$ it is clear that 
$$
\norm{\nabla^{\perp}(a\Phi_{\beta})}{L^1}(\norm{(\theta^n-\theta)(t)}{L^{\infty}} + \norm{(\theta^n-\theta)(0)}{L^{\infty}}) \to 0 \text{ as } n \to \infty.
$$
Utilizing the uniform bounds of $u^n \in L^{\infty}([0,T] \times \R^2)$ and passing to the limit we find that
$$
\lim_{n \to \infty} \int_0^t \left( \norm{u^n(\tau)}{L^{\infty}}\norm{(\theta^n-\theta)(\tau)}{L^{\infty}} + \norm{\theta(\tau)}{L^{\infty}}\norm{(u^{n-1}-u)(\tau)}{L^{\infty}}\right)d\tau = 0.
$$
Thus we have shown that Lemma \ref{Serfati Identity} holds for the pair $(u,\theta)$. 
\\
\\
\textbf{Uniqueness:} We suppose that we have two solutions $(u^1,\theta^1)$ and $(u^2,\theta^2)$ to \eqref{Holder gSQG} generated from the same initial data. We set $\eta = \theta^1-\theta^2$ and $\overline{u}=u^1-u^2$. We then see that $\overline{u}$ and $\eta$ satisfy the following differential equation
\begin{equation}
\label{uniqueness PDE Holder}
\begin{cases}
\partial_t \eta +u^1\cdot \nabla \eta +\overline{u}\cdot\nabla\theta^2=0 \\
\eta\vert_{t=0}=0.
\end{cases}
\end{equation}
Moreover, we also note that $\overline{u}$ satisfies the following identity for any $t>0$
\begin{equation}
\label{holder serfati uniqueness}
\overline{u}(t) = \nabla^{\perp}(a\Phi_{\beta})\ast\eta(t)-\int_0^t \nabla\nabla^{\perp}((1-a)\Phi_{\beta})\ast\cdot(\eta u^1 + \theta^2 \overline{u})(\tau)d\tau.
\end{equation}
Applying the Littlewood-Paley operator $\Delta_j$ to $\eqref{uniqueness PDE Holder}_1$, introducing a commutator, rewriting in Lagrangian coordinates (using the flow map for $u^1$), and then integrating in time yields
$$
\norm{\Delta_j\eta(t)}{L^{\infty}} \leq \int_0^t \norm{[u^1\cdot\nabla,\Delta_j]\eta(\tau)}{L^{\infty}}d\tau + \int_0^t \norm{\Delta_j(\overline{u}\cdot \nabla \theta^2)(\tau)}{L^{\infty}}d\tau.
$$
Applying the same techniques as earlier when we showed $\{\theta^n\}_n$ is a Cauchy sequence in $C^{r-1}(\R^2)$ then implies that 
\begin{equation}
\label{Holder Uniqueness estimate 1}
\norm{\eta(t)}{C^{r-1}} \lesssim \int_0^t \left( \norm{u^1(\tau)}{C^r}\norm{\eta(\tau)}{C^{r-1}} + \norm{\overline{u}(\tau)}{C^{r-1}}\norm{\theta^2(\tau)}{C^r}\right)d\tau.
\end{equation}
Similarly, we also find that 
\begin{equation}
\label{Holder Uniqueness estimate 2}
\norm{\overline{u}(t)}{L^{\infty}} \lesssim \norm{\eta(t)}{C^{r-1}} + \int_0^t \left( \norm{(\eta u^1)(\tau)}{L^{\infty}} + \norm{(\theta^2\overline{u})(\tau)}{L^{\infty}}\right)d\tau.
\end{equation}
Then utilizing the regularity of $u^1,\theta^2$, adding \eqref{Holder Uniqueness estimate 1} to \eqref{Holder Uniqueness estimate 2}, substituting \eqref{Holder Uniqueness estimate 1} for $\norm{\eta(t)}{C^{r-1}}$ in \eqref{Holder Uniqueness estimate 2} and using $\norm{\overline{u}(\tau)}{L^{\infty}} \lesssim \norm{\overline{u}(\tau)}{C^{r-1}}$ yields
\begin{equation}
\label{Holder uniqueness estimate 3}
\norm{\eta(t)}{C^{r-1}} + \norm{\overline{u}(t)}{L^{\infty}} \lesssim \int_0^t \left(\norm{\eta(\tau)}{C^{r-1}} + \norm{\overline{u}(\tau)}{C^{r-1}}\right)d\tau.
\end{equation}
Since we have the equality 
$$
\dot{\Delta}_j \overline{u} = \dot{\Delta}_j \nabla^{\perp}(-\Delta)^{-1+\beta/2}\eta \hspace{.5cm} \text{for all } j \in \mathbb{Z},
$$
applying Lemma \ref{Velocity Holder Estimate}  we find that $\norm{\overline{u}(t)}{C^{r-1}} \lesssim  \norm{\overline{u}(t)}{L^{\infty}} + \norm{\eta(t)}{C^{r-1}}$. Substituting this into \eqref{Holder uniqueness estimate 3} yields
\begin{equation}
\label{Holder uniqueness estimate 4}
\norm{\eta(t)}{C^{r-1}} + \norm{\overline{u}(t)}{L^{\infty}} \lesssim \int_0^t \left(\norm{\eta(t)}{C^{r-1}} + \norm{\overline{u}(\tau)}{L^{\infty}}\right)d\tau.
\end{equation}
Thus applying Grönwall's inequality we find that $\eta(t), \overline{u}(t) \equiv 0$ for all $t\in [0,T]$ which shows that the solution to \eqref{Holder gSQG} is unique.
\end{proof}

\section*{Acknowledgements}
\noindent The author thanks Elaine Cozzi for useful discussions and suggesting this project to work on.

\end{document}